\numberwithin{equation}{section}
\def\a{\alpha}  
\def\b{\beta}  
\def\c{\gamma}  
\def\d{\delta}
\def\z{\zeta}
\def\m{\mu}
\def\s{\sigma}
\def\w{\omega} 
\def\x{\xi}
\def\sA{\mathscr{A}} 
\def\fa{\mathfrak{a}} 
\def\tAd{\mathrm{Ad}} \def\tad{\mathrm{ad}}
 \def\tAnn{\mathrm{Ann}}
\def\tAut{\mathrm{Aut}}
\def\fb{\mathfrak{b}} 
\def\bC{\mathbb C} 
\def\fc{\mathfrak{c}}  
\def\bc{\mathbf{c}}
\def\cD{\mathcal D}
\def\fd{\mathfrak{d}} 
\def\td{\mathrm{d}}
 \def\tdim{\mathrm{dim}}
 \def\fe{\mathfrak{e}}
\def\ff{\mathfrak{f}}
\def\tFlag{\mathrm{Flag}}
\def\tGr{\mathrm{Gr}}
\def\fg{{\mathfrak{g}}}
 \def\ttH{\mathtt{H}}
\def\tHom{\mathrm{Hom}}
\def\fh{\mathfrak{h}} 
\def\bh{\mathbf{h}}
\def\bi{\mathbf{i}} 
\def\bI{\mathbf{I}}  \def\cI{\mathcal I} \def\sI{\mathscr{I}}
 \def\tIm{\mathrm{Im}}
 \def\tim{\mathrm{im}}
\def\ttJ{\mathtt{J}}
\def\fk{\mathfrak{k}} 
\def\sfk{\mathsf{k}} 
 \def\tker{\mathrm{ker}}
 \def\ttL{\mathtt{L}}
\def\fl{\mathfrak{l}} 
\def\tLG{\mathrm{LG}}
\def\fm{\mathfrak{m}}
\def\sfm{\mathsf{m}} 
\def\tmax{\mathrm{max}} 
\def\tmod{\mathrm{mod}}
\def\fn{\mathfrak{n}}
\def\bP{\mathbb P}
\def\fp{\mathfrak{p}} 
\def\sfp{\mathsf{p}}
\def\bQ{\mathbb Q} \def\cQ{\mathcal Q}
\def\fq{\mathfrak{q}} \def\bq{\mathbf{q}}
\def\sfq{\mathsf{q}} 
\def\bR{\mathbb R}
 \def\cS{\mathcal S}
\def\fs{\mathfrak{s}}
\def\tSL{\mathrm{SL}} \def\tSO{\mathrm{SO}}
\def\tSp{\mathrm{Sp}} \def\tSpin{\mathit{Spin}}
 \def\tSing{\mathrm{Sing}}
 \def\tSym{\mathrm{Sym}}
 \def\tspan{\mathrm{span}}
\def\fsl{\mathfrak{sl}} \def\fso{\mathfrak{so}} 
\def\fsp{\mathfrak{sp}} \def\fsu{\mathfrak{su}}
\def\cT{\mathcal T} 
 \def\ttT{\mathtt{T}}
\def\ft{\mathfrak{t}} 
\def\bU{\mathbb U}
 \def\cV{\mathcal V}
\def\bx{\mathbf{x}}
 \def\bZ{\mathbb Z}
\def\fz{\mathfrak{z}} 
\def\half{\tfrac{1}{2}}
\def\one{\mathbbm{1}}
\def\dfn{\stackrel{\hbox{\tiny{dfn}}}{=}}
\def\sbullet{{\hbox{\tiny{$\bullet$}}}}
\def\op{\oplus}
\def\ot{\otimes}
\def\wt{\widetilde}
\def\tw{\hbox{\small $\bigwedge$}}
\def\wtL{{\Lambda_\mathrm{wt}}}
\def\rtL{{\Lambda_\mathrm{rt}}}
\newcounter{numcnt}
\newcounter{cnt}
\newcounter{acnt}
\newenvironment{a_list_nem}{ 
  \begin{list}{{(\alph{acnt})}}
   {\usecounter{acnt} \setlength{\itemsep}{3pt}
    \setlength{\leftmargin}{25pt} \setlength{\labelwidth}{20pt} }
   }
   {\end{list}}
\newenvironment{a_list}{ 
  \begin{list}{{\emph{(\alph{acnt})}}}
   {\usecounter{acnt} \setlength{\itemsep}{3pt}
    \setlength{\leftmargin}{25pt} \setlength{\labelwidth}{20pt} }
   }
   {\end{list}}
\newcounter{Acnt}
\newcounter{icnt}
\newenvironment{i_list}{ 
  \begin{list}{{\emph{(\roman{icnt})}}}
   {\usecounter{icnt} \setlength{\itemsep}{3pt}
    \setlength{\leftmargin}{25pt} \setlength{\labelwidth}{20pt} }
   }
   {\end{list}}
\newenvironment{i_list_nem}{ 
  \begin{list}{{(\roman{icnt})}}
   {\usecounter{icnt} \setlength{\itemsep}{3pt}
    \setlength{\leftmargin}{25pt} \setlength{\labelwidth}{20pt} }
   }
   {\end{list}}
\newcounter{Icnt}
\newcounter{exam_cnt}
\newcounter{mccnt}
\newenvironment{circlist}{ 
  \begin{list}{$\circ$}
   {\usecounter{cnt} \setlength{\itemsep}{2pt}
    \setlength{\leftmargin}{15pt} \setlength{\labelwidth}{20pt} }
   }
   {\end{list}}
\newtheorem{corollary}[equation]{Corollary}
\newtheorem*{corollary*}{Corollary}
\newtheorem{lemma}[equation]{Lemma}
\newtheorem*{lemma*}{Lemma}
\newtheorem{proposition}[equation]{Proposition}
\newtheorem*{proposition*}{Proposition}
\newtheorem{theorem}[equation]{Theorem}
\newtheorem*{theorem*}{Theorem}
\theoremstyle{remark}
\newtheorem*{assume*}{Assume}
\newtheorem*{claim*}{Claim}
\newtheorem{definition}[equation]{Definition}
\newtheorem*{definition*}{Definition}
\newtheorem{example}[equation]{Example}
\newtheorem*{example*}{Example}
\newtheorem*{hint*}{Hint}
\newtheorem*{notation*}{Notation}
\newtheorem*{question*}{Question}
\newtheorem*{answer*}{Answer}
\newtheorem{remark}[equation]{Remark}
\newtheorem*{remark*}{Remark}
\newtheorem*{remarks*}{Remarks}
\newtheorem*{fact*}{Fact}
\numberwithin{HWeq}{section}
\theoremstyle{definition}
\def\fm{{\mathfrak{g}}}
\def\mss{\fm_0^\mathrm{ss}}
\def\MR{{G_\bR}} 
\def\mR{{\fm_\bR}} 
\def\hp{{\fh_\varphi}} \def\Hp{{H_\varphi}}
\def\hpp{{\fh_\varphi^\perp}}
\def\bvsigma{{\mathbf{\varsigma}}}
\def\FI{\mathrm{F}\,\mathrm{I}}
\def\FII{\mathrm{F}\,\mathrm{II}}
\begin{document}
\title[Schubert variations of Hodge structure]{Schubert varieties as variations of Hodge structure}
\author[Robles]{C. Robles}
\email{robles@math.tamu.edu}
\address{Mathematics Department, Mail-stop 3368, Texas A\&M University, College Station, TX  77843-3368} 
\thanks{Robles is partially supported by NSF DMS-1006353.}
\date{\today}
\begin{abstract}
We (1) characterize the Schubert varieties that arise as variations of Hodge structure (VHS); (2) show that the isotropy orbits of the infinitesimal Schubert VHS `span' the space of all infinitesimal VHS; and (3) show that the cohomology classes dual to the Schubert VHS form a basis of the invariant characteristic cohomology associated to the infinitesimal period relation (a.k.a. Griffiths transversality).
\end{abstract}
\keywords{Variation of Hodge structure, Schubert varieties}
\subjclass[2010]
{
 14M15, 
 14D07, 32G20. 
}
\maketitle
\setcounter{tocdepth}{1}

\section{Introduction} \label{S:intro}

A variation of Hodge structure (VHS) is a complex submanifold of a period domain $\cD$ that satisfies the infinitesimal period relation (IPR), also known as Griffiths' transversality.  The period domain is an open orbit of the compact dual $\check\cD$, a rational homogeneous variety, and the IPR extends to an invariant differential system on $\check\cD$.  The homology classes $[X]$ of the Schubert varieties $X \subset\check\cD$ form an additive basis of the integer homology $H_\sbullet(\check\cD,\bZ)$; so it is natural to ask:  Which Schubert varieties $X \subset \check\cD$ are integrals of the IPR?  That is, \emph{which Schubert varieties arise as variations of Hodge structure?}

Our starting point is the recent Green--Griffiths--Kerr structure theorem, which motivates us to pose the question, not only for period domains $\cD$, but in the more general Hodge domains $D$.  The structure theorem is discussed in greater detail in Section \ref{S:dfns}; roughly speaking, the idea is as follows.  Like period domains, Hodge domains are complex homogeneous manifolds $D = G_\bR/H$ admitting an invariant differential system $\sI$.  The Hodge domains admit various realizations as Mumford--Tate domains.  Mumford--Tate domains are in turn the proper context in which to consider variations of Hodge structure; for (the lift of) any global variation of Hodge structure $\Phi$ is contained in a Mumford--Tate domain $D_\Phi \subset \cD$.  (The Mumford--Tate domain $D_\Phi$ is, in general, a proper submanifold of the period domain $\cD$.)  Moreover, the variation of Hodge structure, a priori an integral of the infinitesimal period relation on $\cD$, is an integral of $\sI$.  And conversely, any integral $Y\subset D_\Phi$ of $\sI$, is a variation of Hodge structure; that is, an integral of the infinitesimal period relation on $\cD$.  This justifies our referring $\sI$ as the `infinitesimal period relation on the Hodge domain $D$,' and to to integrals $Y \subset D$ of $\sI$ as `variations of Hodge structure.'

The compact dual $\check D = G_\bC/P$ of the Hodge domain is a rational homogeneous variety.  As above, the Schubert classes form an additive basis of the integer homology of $\check D$, and we generalize the motivating question to: \emph{which Schubert varieties $X \subset \check D$ are variations of Hodge structure?}  The answer is given by Theorem \ref{T:SchubInt}.  We call these variations of Hodge structure `Schubert VHS.'  Corollaries to the theorem include:
\begin{i_list_nem}
\item Define a `maximal Schubert VHS' to be a maximal element (with respect to containment) of the set of Schubert VHS, and a `maximal VHS' to be a maximal element of the set of VHS.  A priori, a maximal Schubert VHS need not be a maximal VHS.  However, Proposition \ref{P:SchubMax} asserts that this is indeed the case:  any maximal Schubert VHS is a maximal variation of Hodge structure.
\item If the Hodge group $G_\bC$ is either special linear (type $A$) or symplectic (type $C$), then the maximal Schubert VHS are homogeneously embedded Hermitian symmetric spaces (Proposition \ref{P:sm}).  This need not be the case for other simple Hodge groups; see the examples of Appendix \ref{A:egs}.
\end{i_list_nem}

The second main result of the paper is Theorem \ref{T:intelem} which describes the entire space of infinitesimal variations of Hodge structure; roughly speaking, this space is spanned by the isotropy orbits of the infinitesimal Schubert variations of Hodge structure.  In particular, there exists a VHS of dimension $d$ if and only if there exists a Schubert VHS of dimension $d$.  This yields counter-examples to a rigidity theorem of R. Mayer for maximal variations of Hodge structure (Examples \ref{eg:C5P25} and \ref{eg:B6P35}).

The final main result of the paper is a description of the invariant characteristic cohomology (ICC) of the IPR by Theorem \ref{T:icc}.  Loosely speaking, the ICC describes the topological invariants of a global variation of Hodge structure that can be defined independently of the monodromy group.  Roughly speaking, we find that the ICC is spanned by cohomology classes dual to the homology classes of the Schubert VHS.  This answers the question posed by Green--Griffiths--Kerr \cite[p. 301]{MR2666359} to explicitly identify invariant representatives.

\smallskip

Variations of Hodge structure have been of interest since introduced by P. Griffiths in the 1960s; the interested reader might consult the following sample \cite{MR2165541, MR717607, MR720288, MR894385, MR974782, MR1016441, MR2048516, MR720291, FL, MR720289, MR0229641, MR0233825, MR720290}, and the references therein.


\tableofcontents

In addition to the main results outlined above, I wish to point out a few other aspects of the paper.  Grading elements play a salient role in the paper; they are reviewed in Section \ref{S:GE}, and their relationship to Hodge structures is established in Section \ref{S:HS}.  As an illustration of their utility, we use grading elements to give a simple characterization of Hodge representations of Calabi--Yau type (Proposition \ref{P:CY}).  This generalizes \cite[Lemma 2.27]{FL}, which gives a necessary condition for a Hodge representation to be of Calabi--Yau type in the case that the corresponding Hodge domain is Hermitian symmetric.  

Proposition \ref{P:red} is a reduction theorem which asserts that, for the purpose of studying the differential system, we may assume that the IPR is bracket--generating.  This is equivalent to \eqref{E:TI}; when considering results that follow, keep in mind that this assumption will be in effect from that point on. 

Finally, many examples are presented in Appendix \ref{A:egs}.

\subsection*{Acknowledgements}
I benefited from conversations/correspondence with many colleagues.  I would especially like to thank J. Daniel, P. Griffiths, M. Kerr and R. Laza for their time and insight.

\section{Preliminaries} \label{S:prelim}

In Section \ref{S:dfns} we review the relevant definitions and results from Hodge theory, and set notation.  Section \ref{S:GE} presents the relationship between parabolic algebras and grading elements; the latter will play a prominent role in the representation theoretic computations of the paper.  Section \ref{S:HS} establishes the correspondence between grading elements and Hodge structures.

\subsection{Hodge groups and representations} \label{S:dfns}

\noindent This section reviews some definitions and results from \cite{MR2918237}.

Fix $n \in \bZ$.  Let $V$ be a $\bQ$--vector space.  Set $\bi = \sqrt{-1}$, and let $S^1 \subset \bC$ be the unit circle.

\begin{definition} \label{D:hs}
A \emph{Hodge structure of weight $n\in\bZ$} on $V$ is a homomorphism $\varphi : \bU(\bR) \simeq S^1 \to \tSL(V_\bR)$ of $\bR$--algebraic groups such that $\varphi(-1) = (-1)^n \one$.  The associated \emph{Hodge decomposition} 
$V_\bC = \op_{p+q=n}V^{p,q}$, with $\overline{V^{p,q}} = V^{q,p}$, is given by
$$
  V^{p,q} = \{ v \in V_\bC \ | \ \varphi(z) v = z^{p-q} v \,,\ \forall \ z\in S^1 \}\,.
$$
(Any of the integers $p,q,n$ may be negative.)  The corresponding \emph{Hodge filtration} is given by $F^p = \op_{q\ge p}V^{q,n-q}$.  

The \emph{Hodge numbers} $\bh = (h^{p,q})$ and $\mathbf{f} = (f^p)$ are
$$
  h^{p,q} \ = \ \tdim_\bC\,V^{p,q}
  \quad\hbox{and}\quad
  f^p \ = \ \tdim_\bC\,F^p \, .
$$

A Hodge structure of weight $n\ge0$ is \emph{effective} if $V^{p,q} \not= 0$ only when $p,q\ge0$.
\end{definition}

\begin{definition} \label{D:phs}
Let $Q : V \times V \to \bQ$ be a non-degenerate bilinear form satisfying $Q(u,v) = (-1)^nQ(v,u)$ for all $u,v\in V$.  A Hodge structure $(V,\varphi)$ of weight $n$ is \emph{polarized} by $Q$ if the \emph{Hodge--Riemann bilinear relations} hold: 
$$
  Q(F^p , F^{n-p+1}) = 0 \,,\quad\hbox{and}\quad
  Q(v,\varphi(\bi)\bar v) > 0 \ \forall \ 0\not=v\in V_\bC \,. 
$$
\end{definition}

\noindent Throughout we assume that the Hodge structure $\varphi$ is polarized.  

The \emph{Mumford--Tate group} $M_\varphi$ associated to $\varphi$ is the $\bQ$--algebraic closure of $\tIm\,\varphi$ in $\tSL(V_\bR)$. It is the maximal subgroup acting trivially on the Hodge tensors \cite[(I.B.1)]{MR2918237}.  As a consequence, the Mumford--Tate group is reductive \cite[(I.B.6)]{MR2918237}.  Definition \ref{D:phs} implies that 
\begin{equation} \label{E:ctmt}
  \varphi(S^1) \ \subset \ M_\varphi \ \subset \ \tAut(V_\bR,Q) \, .
\end{equation}
Additionally, the stabilizer $H_\varphi \subset M_\varphi(\bR)$ of $\varphi$ is compact ($Q$ is a Hodge tensor), and equal to the centralizer of $\varphi(S^1)$.  

Fix $(V,Q)$.  Given $\bh = (h^{p,q})$, the \emph{period domain} $\cD = \cD_\bh$ is the set of Hodge structures $\varphi$ on $V$ polarized by $Q$ and with $\tdim\,V^{p,q} = h^{p,q}$.  The period domain is a homogeneous manifold with respect to the $\tAut(V,Q)$--action: given $\varphi \in \cD$, we have $\cD = \tAut(V_\bR,Q) \cdot \varphi$.  The orbit $D_\varphi = M_\varphi(\bR)\cdot\varphi \subset \cD$ is a \emph{Mumford--Tate domain}.  The \emph{compact dual} $\check D_\varphi = M_\varphi(\bC)\cdot\varphi$ is a rational homogeneous variety \cite{MR0066396}.  Let $P_\varphi \subset M_\varphi(\bC)$ be the stabilizer of $\varphi$.  Then 
$$
  D_\varphi \ \simeq \ M_\varphi(\bR)/H_\varphi 
  \quad\hbox{ and }\quad
  \check D_\varphi \ \simeq \ M_\varphi(\bC)/P_\varphi \, ,
$$
as complex manifolds.

As the structure theorem below indicates, Mumford--Tate domains are natural objects for the study of VHS.  Let $\Phi : S \to \Gamma\backslash \cD$ be a global variation of Hodge structure.  Modulo isogeny, the image $\Phi(S)$ is contained in $\Gamma \backslash D_\Phi$, where $D_\Phi$ is a Mumford--Tate domain for the Mumford--Tate group $M_\Phi$ of the variation of Hodge structure \cite[(III)]{MR2918237}.  More precisely, let $S$ be smooth and quasi-projective.  We assume $V = V_\bZ \ot_\bZ \bQ$ and $\Gamma \subset \tAut(V,Q)\cap\tAut(V_\bZ)$ is the monodromy group.  Let $\wt S$ be the universal cover of $S$, and $\wt\Phi : \wt S \to \cD$ the lift of $\Phi$.  Fix a generic point $\tilde \eta \in \wt S$, and let $\tilde\varphi = \tilde\Phi(\tilde \eta)$.  The Mumford--Tate group $M_\Phi$ of $\Phi$ is the Mumford--Tate group $M_{\tilde\varphi}$ of $\tilde\varphi$, cf. \cite[(III)]{MR2918237}.  In particular, $M_\Phi$ is reductive and $\Gamma \subset M_\Phi$.  Let $M_\Phi = M_1\times\cdots\times M_t\times A$ be the almost product decomposition into $\bQ$--simple factors $M_s$ and abelian $A$.  Then the orbit $D_\Phi = M_\Phi(\bR)\cdot\tilde\varphi$ is a Mumford--Tate domain for $M_\Phi$, and $D_s = M_s(\bR)\cdot\tilde\eta \subset \cD$ is a Mumford--Tate domain for $M_s$.

\begin{theorem}[Structure Theorem {\cite[(III.A)]{MR2918237}}] \label{T:str}
Let $\Phi : S \to \Gamma\backslash\cD$ be a global variation of Hodge structure.
\begin{i_list}
\item The $D_s$ are complex submanifolds of $D_\Phi$.
\item Up to isogeny, the monodromy group splits $\Gamma = \Gamma_1 \times \cdots \times \Gamma_k$, with $k \le t$, and $\overline\Gamma{}_j^\bQ = M_j$.
\item Modulo isogeny, $\Phi(S) \subset \Gamma \backslash D_\Phi \ \simeq \ \Gamma_1\backslash D_1 \times \cdots \times \Gamma_k \backslash D_k \times (D_{k+1} \times\cdots\times D_t)$.
\end{i_list}
\end{theorem}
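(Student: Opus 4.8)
The plan is to derive the three assertions from the Mumford--Tate theory recalled above, supplemented by two external inputs: the Theorem of the Fixed Part (Deligne, Schmid) and André's theorem on the normality of the algebraic monodromy group inside the derived Mumford--Tate group. The organizing principle throughout is the almost--product decomposition $M_\Phi = M_1\times\cdots\times M_t\times A$ and the weight--zero Hodge structure it induces on $\mathfrak{m}_\Phi = \mathrm{Lie}(M_\Phi)$ through the adjoint action of $\varphi(S^1)$.

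For \emph{Part (i)} I would first observe that, since $\varphi(S^1)\subset M_\Phi$ by \eqref{E:ctmt} and $M_\Phi$ is reductive, the adjoint representation makes $\mathfrak{m}_{\Phi,\bC}$ a weight--zero sub--Hodge structure $\mathfrak{m}_{\Phi,\bC} = \oplus_p\,\mathfrak{m}_\Phi^{p,-p}$ of $\mathrm{Lie}(\tAut(V_\bC,Q))$. Because $\varphi(S^1)$ is connected it cannot permute the $\bQ$--simple factors, so each $M_s$ is $\varphi$--invariant and the decomposition restricts to a weight--zero Hodge structure on each $\mathfrak{m}_s = \mathrm{Lie}(M_s)$. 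Identifying the holomorphic tangent space $T^{1,0}_{\tilde\varphi}\cD$ with the negative graded piece of $\mathrm{Lie}(\tAut(V_\bC,Q))$, the tangent space to the orbit $D_s = M_s(\bR)\cdot\tilde\varphi$ is $\mathfrak{m}_s/(\mathfrak{m}_s\cap\hp)$, and $\varphi$--invariance of $\mathfrak{m}_s$ forces its intersection with $T^{1,0}_{\tilde\varphi}\cD$ to be exactly the holomorphic tangent space of $D_s$. This is the infinitesimal complex--submanifold condition; holding at every point of the homogeneous orbit, it shows $D_s$ is a complex submanifold of $D_\Phi$.

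\emph{Part (ii)} is the heart of the matter. Writing $H_\Phi = (\overline{\Gamma}^{\,\mathrm{Zar}})^\circ$ for the connected algebraic monodromy group, I would invoke the Theorem of the Fixed Part together with André's theorem, which jointly force $H_\Phi$ to be a connected normal $\bQ$--subgroup of the derived group $M_\Phi^{\mathrm{der}} = M_1\times\cdots\times M_t$. A connected normal subgroup of an almost--direct product of $\bQ$--simple factors is itself the almost--direct product of a sub--collection of those factors; after relabeling, $H_\Phi = M_1\times\cdots\times M_k$ up to isogeny for some $k\le t$, with $M_{k+1},\dots,M_t$ carrying only finite monodromy. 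Finally $\Gamma$ is an arithmetic lattice in $H_\Phi(\bR)$, and because the $M_1,\dots,M_k$ are genuinely $\bQ$--simple (restriction--of--scalars factors staying intact as single $M_j$), a standard commensurability argument for arithmetic subgroups of $\bQ$--groups splits $\Gamma$, up to finite index, as $\Gamma_1\times\cdots\times\Gamma_k$ with $\Gamma_j\subset M_j$ an irreducible arithmetic lattice whose $\bQ$--Zariski closure is all of $M_j$, i.e. $\overline{\Gamma}{}_j^{\,\bQ} = M_j$.

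To assemble \emph{Part (iii)} I would combine the orbit factorization $D_\Phi = D_1\times\cdots\times D_t$ — valid because the real almost--product decomposition descends to the orbit through $\tilde\varphi$, the central abelian factor $A(\bR)$ lying in the isotropy $\Hp$ and hence contributing no factor — with the splitting of $\Gamma$ from Part (ii). As $\Gamma = \Gamma_1\times\cdots\times\Gamma_k$ acts factorwise and trivially on $D_{k+1},\dots,D_t$, the quotient is $\Gamma_1\backslash D_1\times\cdots\times\Gamma_k\backslash D_k\times(D_{k+1}\times\cdots\times D_t)$, while $\Phi(S)\subset\Gamma\backslash D_\Phi$ is precisely the defining Mumford--Tate domain property recalled before the statement. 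The main obstacle is Part (ii): the normality of $H_\Phi$ in $M_\Phi^{\mathrm{der}}$ is not formal but is the substance of André's theorem, whose proof rests on the Theorem of the Fixed Part and a delicate analysis of how the monodromy invariants interact with the Hodge tensors cutting out $M_\Phi$; the descent to an honest product of lattices (rather than an irreducible one) is the reason the conclusion is only claimed up to isogeny.
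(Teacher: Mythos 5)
A preliminary remark: the paper contains no proof of this statement; it is quoted from Green--Griffiths--Kerr \cite[(III.A)]{MR2918237} and used as an imported black box to justify working on Hodge domains. So the only meaningful comparison is with the argument in that reference, and your outline does track it in broad strokes: part (i) via the $\varphi$--invariance of each $\bQ$--simple factor (connectedness of $\varphi(S^1)$ rules out any permutation of the almost--simple factors, so each $\mathrm{Lie}\,M_s$ inherits a weight--zero sub--Hodge structure and the orbit $D_s = M_s(\bR)\cdot\tilde\varphi$ is a complex submanifold), and the first half of part (ii) via the theorem of the fixed part together with the Deligne--Andr\'e normality of the connected algebraic monodromy group $H_\Phi$ in $M_\Phi^{\mathrm{der}}$, which identifies $H_\Phi$ with $M_1\times\cdots\times M_k$ after relabeling. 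That skeleton is correct and is the intended one.

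The genuine gap is in the second half of your part (ii). You assert that $\Gamma$ is an arithmetic lattice in $H_\Phi(\bR)$ and invoke a commensurability argument for arithmetic groups to split it. Monodromy groups of variations of Hodge structure are finitely generated subgroups of $\tAut(V_\bZ)\cap\tAut(V,Q)$ whose $\bQ$--Zariski closure is $H_\Phi$, but nothing in the hypotheses makes them lattices, let alone arithmetic ones; thin (infinite--index) monodromy does occur, so this step is not available. Nor can it be replaced by Zariski density alone: a Zariski--dense subgroup of a product of $\bQ$--simple groups need not virtually split as an abstract group --- the graph of a non--algebraic abstract isomorphism between Zariski--dense subgroups of the two factors is Zariski dense in the product but is as far from split as possible. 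The splitting $\Gamma = \Gamma_1\times\cdots\times\Gamma_k$ ``up to isogeny'' in \cite{MR2918237} is instead extracted from the Hodge--theoretic input (semisimplicity of the local system and the theorem of the fixed part applied to the decomposition of $V$ compatible with the factors $M_j$), which is what forces $\Gamma$, after passing to a finite--index subgroup, to agree with the product of its projections $\Gamma_j = p_j(\Gamma)$, each Zariski dense in $M_j$. Until the arithmeticity step is replaced by an argument of this kind, part (ii) --- and with it the product decomposition $\Gamma\backslash D_\Phi \simeq \Gamma_1\backslash D_1\times\cdots\times\Gamma_k\backslash D_k\times(D_{k+1}\times\cdots\times D_t)$ in part (iii) --- does not follow. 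The remaining ingredients of part (iii) (the factorization $D_\Phi = D_1\times\cdots\times D_t$ up to isogeny and the absorption of the central factor $A(\bR)$ into the isotropy group $\Hp$) are fine.
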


Led in part by the structure theorem, Green--Griffiths--Kerr \cite{MR2918237} ask: Which semisimple, $\bQ$--algebraic groups $G$ may be realized as Mumford--Tate groups of polarized Hodge structures?  (They show that the adjoint form of a simple $\bQ$--algebraic group $G$ is a Mumford--Tate group if and only if $G_\bR$ contains a compact maximal torus \cite[(IV.E.1)]{MR2918237}.)   What can be said about the different realizations of $G$ as a Mumford--Tate group?  What is the relationship between the associated Mumford--Tate domains?  To address these questions, Green--Griffiths--Kerr introduce the following notion.

\begin{definition} \label{D:Hrep}
A \emph{Hodge representation} $(G,\rho,\varphi)$ consists of a reductive $\bQ$--algebraic group $G$, a representation $\rho : G \to \tAut(V,Q)$, defined over $\bQ$, and a non-constant homomorphism $\varphi : S^1 \to G_\bR$ such that $(V,Q,\rho\circ\varphi)$ is a polarized Hodge structure.   
\end{definition}

\begin{theorem}[{\cite[(IV.A.2)]{MR2918237}}] \label{T:HR}
Assume $G$ is a semisimple $\bQ$--algebraic group.  If $G$ has a Hodge representation, then $G_\bR$ contains a compact maximal torus $T$ with $\varphi(S^1) \subset T$ and $\tdim(T) = \mathrm{rank}(G)$.  Moreover, the maximal subgroup $H_\varphi \subset G_\bR$ such that $\rho(H_\varphi)$ fixes the polarized Hodge structure $(V,Q,\rho\circ\varphi)$ is compact and is the centralizer of the circle $\varphi(S^1)$ in $G_\bR$.
\end{theorem}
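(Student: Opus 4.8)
The plan is to establish the two assertions in the following order: first identify $H_\varphi$ with the centralizer $Z := Z_{G_\bR}(\varphi(S^1))$; then prove that $Z$ is compact, which is where the polarization does the real work; and finally extract the compact maximal torus $T$ from $Z$. Throughout I regard $\psi := \rho\circ\varphi$ as the circle acting on $V$, with Weil operator $C := \rho(\varphi(\bi)) = \psi(\bi)$, which lies in $\tAut(V_\bR,Q)$ and satisfies $C^2 = \psi(-1) = (-1)^n\one$. Since a noncompact simple factor contained in $\ker\rho$ would already violate the asserted compactness of $H_\varphi$, I assume, as is standard for the Hodge representations arising here, that $\rho$ is faithful, so that $H_\varphi \cong \rho(H_\varphi)$ and compactness transfers between the two.

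First I would show $H_\varphi = Z$. If $g\in Z$, then $\rho(g)$ commutes with $\psi(S^1)$ and hence preserves each eigenspace $V^{p,q} = \{v : \psi(z)v = z^{p-q}v\}$; thus $\rho(g)$ fixes the Hodge decomposition, in particular the filtration $F^\bullet$, so $g\in H_\varphi$. Conversely, if $g\in H_\varphi$ then $\rho(g)$ preserves $F^\bullet$; being real, it also preserves the conjugate filtration $\overline{F^\bullet}$, and therefore preserves $V^{p,q} = F^p\cap\overline{F^q}$. Since $\psi(S^1)$ acts by a scalar on each $V^{p,q}$, it follows that $\rho(g)$ commutes with $\psi(S^1)$; faithfulness of $\rho$ then yields $g\varphi(z)g^{-1} = \varphi(z)$, i.e.\ $g\in Z$. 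This gives $H_\varphi = Z$, and as a byproduct identifies $H_\varphi$ as the stabilizer of the full Hodge decomposition.

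The heart of the argument is the compactness of $Z$. Using the Hodge--Riemann bilinear relations I would verify that
\[
  \wt H(u,v) \ := \ Q(u,\,C\bar v)
\]
is a positive-definite Hermitian form on $V_\bC$: positivity is the second Hodge--Riemann relation $Q(v,C\bar v) > 0$, while Hermitian symmetry $\overline{\wt H(u,v)} = \wt H(v,u)$ follows from $Q(u,v) = (-1)^n Q(v,u)$ together with $C\in\tAut(V,Q)$ and $C^2 = (-1)^n\one$ (the key identity being $Q(Cv,\bar u) = (-1)^n Q(v,C\bar u)$). Now any $g\in H_\varphi = Z$ preserves $Q$ (as $\rho(G)\subset\tAut(V,Q)$) and commutes with $C$ (it preserves each $V^{p,q}$, on which $C$ is scalar); being real, it also commutes with complex conjugation, so
\[
  \wt H(\rho(g)u,\rho(g)v) \ = \ Q(\rho(g)u,\,C\rho(g)\bar v) \ = \ Q(\rho(g)u,\,\rho(g)C\bar v) \ = \ \wt H(u,v).
\]
Hence $\rho(Z)\subset \tU(V_\bC,\wt H)$, a compact unitary group; since $\rho(Z)$ is closed (it is the set of real points of an algebraic group), it is compact, and therefore so is $Z = H_\varphi$. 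Finally I would produce $T$: the torus $\varphi(S^1)$ lies in some maximal torus $T\subset G_\bR$, which, being abelian and containing $\varphi(S^1)$, satisfies $T\subset Z = H_\varphi$; compactness of $H_\varphi$ then forces $T$ to be compact (anisotropic). As $T$ is maximal, $T_\bC$ is a maximal torus of $G_\bC$, whence $\tdim_\bR T = \mathrm{rank}(G)$, and $\varphi(S^1)\subset T$ by construction.

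I expect the main obstacle to be the compactness step. Extracting a positive-definite \emph{Hermitian} form from the bilinear relations requires care with the sign conventions and with the interaction between $C$, complex conjugation, and $Q$; and one must ensure that compactness of $\rho(H_\varphi)$ genuinely descends to $H_\varphi$, i.e.\ that no factor of $G$ is absorbed into $\ker\rho$. The identification $H_\varphi = Z$ and the torus extraction are, by contrast, largely formal once the real-structure argument ($F^\bullet$ and $\overline{F^\bullet}$ together recovering the grading) is in place.
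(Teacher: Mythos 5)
The paper offers no proof of this statement: it is imported verbatim from Green--Griffiths--Kerr \cite[(IV.A.2)]{MR2918237}, so there is no internal argument to compare yours against. Your reconstruction is correct and is essentially the classical argument (Griffiths, and GGK): the second Hodge--Riemann relation makes $\wt H(u,v) = Q(u,C\bar v)$ a positive-definite Hermitian form, the centralizer of $\varphi(S^1)$ preserves it because its elements are real, preserve $Q$, and commute with the Weil operator $C$ (scalar on each $V^{p,q}$), so it lands in a unitary group; the identification $H_\varphi = Z_{G_\bR}(\varphi(S^1))$ via $V^{p,q} = F^p\cap\overline{F^q}$ and the extraction of a compact maximal torus inside this centralizer are exactly as in the source. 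Your sign check $Q(Cv,\bar u) = (-1)^nQ(v,C\bar u)$ is right, and you are also right to flag faithfulness of $\rho$: without it the equality of $H_\varphi$ with the centralizer can fail (the kernel lies in $H_\varphi$ but need not centralize $\varphi(S^1)$), which is why GGK work in a setting where $G$ acts faithfully, e.g.\ as a Mumford--Tate group.
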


Theorem \ref{T:Ad} describes the relationship between the Mumford--Tate domains associated to two differential realizations of $G$ as a Mumford--Tate group.  Let $B$ denote the \emph{Killing form} on $\fm$, and let $\tAd : G \to \tAut(\fm,B)$ denote the \emph{adjoint representation}.

\begin{theorem}[{\cite[(IV.F.1)]{MR2918237}}] \label{T:Ad}
The triple $(G,\rho,\varphi)$ is a Hodge representation if and only if $(G, \tAd , \varphi)$ is a Hodge representation.  In this case, the stabilizers of the two associated polarized Hodge structures $\rho\circ\varphi$ and $\tAd\circ\varphi$ agree.  Let $H_\varphi$ denote this stabilizer, so that the two Mumford--Tate domains are isomorphic to $G_\bR/H_\varphi$ as complex, homogeneous manifolds.  Then the two infinitesimal period relations\footnote{The infinitesimal period relation is defined in Section \ref{S:IPR}.} agree under this identification.
\end{theorem}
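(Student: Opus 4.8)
The plan is to reduce every assertion to properties of the single circle $\varphi(S^1) \subset G_\bR$ and its infinitesimal generator, observing that both Hodge structures $\rho\circ\varphi$ and $\tAd\circ\varphi$ are nothing but the eigenspace decompositions of $V_\bC$, respectively $\fm_\bC$, under $\varphi(S^1)$. Indeed, if $\varphi(z)$ acts on $V^{p,q}$ by $z^{p-q}$, then $\tAd(\varphi(z))$ acts on $\fm^{p,-p}$ by $z^{2p}$, so the Hodge structure on $\fm$ automatically has weight zero. (The weight-zero condition $\tAd(\varphi(-1)) = \tId$ is equivalent to $\varphi(-1)$ lying in the center of $G$; for a Hodge representation $\rho$ this is forced by $\rho(\varphi(-1)) = (-1)^n\one$ being scalar, and the matching weight condition on $V$ is checked the same way in the converse direction.) In both cases the first Hodge--Riemann relation is automatic from the $\varphi(S^1)$--invariance of $Q$, respectively of the Killing form $B$: the pairing of two eigenspaces vanishes unless the eigenvalues are inverse. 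Hence the only substantive requirement, for either representation, is the positivity in the second Hodge--Riemann relation.

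For the equivalence asserted by the first sentence, the key is that each positivity condition is equivalent to compactness of the centralizer $H_\varphi = Z_{G_\bR}(\varphi(S^1))$. First I would invoke Theorem \ref{T:HR}: if $(G,\rho,\varphi)$ is a Hodge representation, then $H_\varphi$ is compact and equals this centralizer. Compactness of $H_\varphi$ is equivalent to the Weil operator $\tAd(\varphi(\bi))$ being a Cartan involution of $\fm_\bR$ --- its $(+1)$--eigenspace being $\mathrm{Lie}(H_\varphi)$ together with the even Hodge summands, its $(-1)$--eigenspace the odd summands --- whence $-B(\,\cdot\,,\tAd(\varphi(\bi))\,\overline{\,\cdot\,})$ is positive definite and polarizes $\tAd\circ\varphi$. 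This gives $(G,\tAd,\varphi)$. Conversely, a Cartan involution $\tAd(\varphi(\bi))$ forces $H_\varphi$ compact, and compactness of $H_\varphi$ lets one build an $H_\varphi$--invariant, $\varphi(S^1)$--graded positive form $Q$ on $V$ that polarizes $\rho\circ\varphi$. I expect this equivalence between the positivity of the polarization and the compactness of $H_\varphi$ --- and in particular the rationality of the form $Q$ produced in the converse --- to be the main obstacle.

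With the equivalence in hand, the remaining claims are formal. The stabilizer in $G_\bR$ of either Hodge structure is precisely the subgroup preserving the corresponding eigenspace decomposition, i.e.\ the centralizer of $\varphi(S^1)$; by Theorem \ref{T:HR} this centralizer is $H_\varphi$ for both representations, so the two stabilizers agree. Consequently each Mumford--Tate domain is the orbit $G_\bR\cdot\varphi \simeq G_\bR/H_\varphi$, and each compact dual is $G_\bC/P_\varphi$, where $P_\varphi$ is the stabilizer in $G_\bC$ of the associated Hodge filtration. Since the filtration is determined by the eigenvalue grading of the generator of $\varphi(S^1)$ --- the same element in both cases --- the parabolic $P_\varphi$, and hence the complex homogeneous identifications, coincide.

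Finally, for the infinitesimal period relations, once the IPR is defined (Section \ref{S:IPR}) it is a $G_\bC$--invariant distribution on the compact dual which at the base point is cut out by the grading of $\fm$ induced by the generator of $\varphi(S^1)$ --- concretely, the sub-distribution corresponding to the Hodge summand $\fm^{-1,1}$ of $\fm/\fp_\varphi$, where $\fp_\varphi = \mathrm{Lie}(P_\varphi)$. Because both the parabolic $P_\varphi$ and this grading of $\fm$ are determined by $\varphi$ alone, and are independent of $\rho$, the two distributions are literally equal under the identification $\check D_{\rho\circ\varphi} \simeq G_\bC/P_\varphi \simeq \check D_{\tAd\circ\varphi}$, which is the final assertion.
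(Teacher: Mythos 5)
The paper does not prove Theorem \ref{T:Ad} --- it is quoted verbatim from Green--Griffiths--Kerr \cite[(IV.F.1)]{MR2918237} --- so I am measuring your argument against the mechanism visible in the companion results the paper does state, namely Proposition \ref{P:mB} and Theorem \ref{T:HdgRep}. Your formal reductions are fine: the weight-zero adjoint structure, the first Hodge--Riemann relation from invariance of $Q$ and of $B$, the identification of both stabilizers with the centralizer of $\varphi(S^1)$, and the coincidence of the parabolic (hence of the compact duals and of the IPRs, since $\cT_1$ is cut out by $\fg_{-1}$, which depends only on $\ttT_\varphi$). The gap sits exactly where you flagged ``the main obstacle,'' but it is not a technicality: your pivot condition is wrong. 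You assert that compactness of $H_\varphi$ is \emph{equivalent} to $\tAd(\varphi(\bi))$ being a Cartan involution of $\fg_\bR$. Only one implication holds. Compactness of $H_\varphi$ constrains the Killing form on $\fh_\varphi = \fg_0\cap\fg_\bR$ alone and says nothing about the graded pieces $\fg_{\pm\ell}$, $\ell\ge 1$. Concretely, take $G_\bR$ compact, say $\tSU(2)$, and $\varphi$ a nonconstant circle in its maximal torus, so that $\fg_{\pm1}\neq 0$: then $H_\varphi$ is automatically compact, but $\tAd(\varphi(\bi))$ acts by $-1$ on $\fg_{\pm1}\cap\fg_\bR$ and hence is not the Cartan involution of the compact form (which is the identity); correspondingly \eqref{E:kqroots} fails and neither $(G,\tAd,\varphi)$ nor any $(G,\rho,\varphi)$ is a Hodge representation. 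So ``$H_\varphi$ compact'' cannot serve as the intermediate condition in either direction of your chain.

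The correct pivot is condition \eqref{E:kqroots} itself: $\fk_\bC = \fg_\mathrm{even}$ and $\fq_\bC = \fg_\mathrm{odd}$ for a Cartan decomposition with $\ft\subset\fk_\bR$, i.e.\ $\tAd(\varphi(\bi))$ \emph{is} a Cartan involution --- a strictly stronger statement than compactness of $H_\varphi$. Proposition \ref{P:mB} says this is equivalent to $(G,\tAd,\varphi)$ being a Hodge representation, and Theorem \ref{T:HdgRep} says it is equivalent to $(G,\rho,\varphi)$ being one. To get from a polarization of $(V,Q,\rho\circ\varphi)$ to the Cartan involution you must use the positivity on $V$ directly (the $C$-polarization argument: a faithful representation carrying a $G$-invariant form $Q$ with $Q(v,\rho(\varphi(\bi))\bar v)>0$ forces $\tInt(\varphi(\bi))$ to be a Cartan involution), not pass through compactness of $H_\varphi$, which discards exactly the information you need. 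The same defect appears in your converse: you propose to build ``an $H_\varphi$-invariant, $\varphi(S^1)$-graded positive form $Q$,'' but $Q$ must be $G$-invariant and defined over $\bQ$, and the existence of a $G$-invariant form with the correct signs $\bi^{\,p-q}$ on the pieces $V^{p,q}$ is precisely what \eqref{E:kqroots} delivers via Theorem \ref{T:HdgRep}; it does not follow from compactness of $H_\varphi$.
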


\begin{definition} \label{D:Hgp}
A \emph{Hodge group} is a semisimple $\bQ$--algebraic group $G$ admitting a Hodge representation.   Given a Hodge representation $(G , \tAd , \varphi)$, the associated \emph{Hodge domain} is 
$$
  D \ \dfn \ G(\bR)/H_\varphi \, ,
$$
and the \emph{compact dual} is 
$$
  \check D \ \dfn \ G(\bC) / P_\varphi \,.
$$ 
\end{definition}

\noindent The compact dual $\check D$ carries a canonical, invariant exterior differential system $\sI$ corresponding to the IPR associated with the polarized Hodge structure $(\fg,B,\tAd\circ\varphi)$.  The content of Theorem \ref{T:Ad} is that a Hodge domain may appear in many different ways as a Mumford-Tate domain; however, for each such realization, the IPRs on the various Mumford-Tate domains all coincide under the identification with the Hodge domain.  In particular, \emph{for the purpose of studying variations of Hodge structure (integrals of the IPR), it suffices to consider Hodge domains.}  This is the perspective adopted here.


\subsection{Parabolic subalgebras and grading elements} \label{S:GE}
Given a parabolic subalgebra $\fp \subset \fg$, let $\fg_0 \subset \fp$ denote the reductive subalgebra in the Levi decomposition of $\fp$.   A grading element $\ttT \in \ft_\bC$ is canonically associated with $\fp$.  Because the grading element acts semisimply (that is, diagonalizably) on every $\fg_0$--module, this association is a very useful tool in the study of parabolic algebras and parabolic geometries.  We will see in Section \ref{S:HS} that there is a bijection between Hodge structures $\varphi$ and grading elements.  We briefly review parabolic subalgebras and grading elements; an excellent reference is \cite[\S3.2]{MR2532439}.  

Let $\fg_\bC$ be a semisimple Lie algebra, and $\ft_\bC \subset \fg_\bC$ a Cartan subalgebra.  Let $\Delta \subset \ft_\bC^*$ be the \emph{roots} of $\fg_\bC$.  Given a root $\a\in\Delta$, let $\fm_\a\subset\fm_\bC$ denote the corresponding \emph{root space}.  Fix a Borel subalgebra $\fb \subset \fm_\bC$ containing $\ft_\bC$.  The choice of Borel subalgebra determines the set of \emph{positive roots} $\Delta^+ \subset \Delta$.  Our convention is that $\fb$ is the direct sum of the Cartan subalgebra $\ft_\bC$ and the \emph{positive} root spaces; that is,
\begin{equation}\label{E:b}
  \fb \ = \ \ft_\bC \,\op\, \Big( \bigoplus_{\a\in\Delta^+} \fm_\a \Big) \, .
\end{equation}
Let $\Sigma = \{\sigma_1,\ldots,\s_r\} \subset \Delta^+$ denote the \emph{simple roots}.  Given any subspace $\fs \subset \fm_\bC$, let 
$$
  \Delta(\fs) \ \dfn \ \{ \a\in\Delta \ | \ \fm_\a \subset \fs \}
  \quad\hbox{and}\quad
  \Sigma(\fs) \ \dfn \ \{ \s \in\Sigma \ | \ \fm_\s \subset \fs \} \,.
$$
For example, $\Delta(\fb) = \Delta^+$ is the set of positive roots.

\begin{definition*}
Let $\rtL \subset \wtL \subset \ft_\bC^*$ be the root and weight lattices of $\fg_\bC$.  The \emph{set of grading elements} is the lattice $\tHom(\rtL,\bZ) \subset \bi\ft$.
\end{definition*}

Let $\{\ttT_1,\ldots,\ttT_r\}$ be the basis of $\ft_\bC$ dual to the simple roots, 
\begin{equation} \label{E:Ti}
  \sigma_i(\ttT_j) \ = \ \d_{ij} \,.
\end{equation}
Then the $\ttT_i$ form a $\bZ$--basis of the grading elements,
$$
  \tHom( \rtL , \bZ ) \ = \ \tspan_\bZ\{ \ttT_1,\ldots,\ttT_r\} \, .
$$
Given a grading element $\ttT\in \tHom(\rtL,\bZ)$, every $\fm_\bC$--representation $U$ admits a \emph{$\ttT$--graded decomposition}
\begin{equation} \label{E:grU}
  U \ = \ \bigoplus_{\ell\in\bQ} U_\ell \,,\quad \hbox{with } \ 
  U_\ell \ \dfn \ \{ u \in U \ | \ \ttT(u) = \ell u \} \,,
\end{equation}
into $\ttT$--eigenspaces.  Each $U_\ell$ is the direct sum of weight spaces $U_\lambda\subset U$; here $\lambda\in\wtL$ is a weight of $U$ such that $\lambda(\ttT) = \ell$.  In the case that $U = \fm_\bC$, we have 
\begin{equation}\label{E:grm}
  \fm_\bC \ = \ \fg_\sfk \op \cdots \op \fg_{-\sfk} \,,\quad \hbox{with } \ 
  \fm_\ell \ = \ \{ \z\in\fm_\bC \ | \ [\ttT,\z] = \ell \z \} \,.
\end{equation}

\subsubsection{Graded structures} \label{S:gr}
Observe that $\fm_\ell$ maps $U_m$ into $U_{\ell+m}$,
\begin{equation}\label{E:mU}
  \fm_\ell(U_m) \ \subset \ U_{\ell+m} \,.
\end{equation}
This has two consequences.  First, in the case that $U = \fm_\bC$,
\begin{equation}\label{E:p}
  \fp_\ttT \ \dfn \ \fm_{\ge0} \ = \ \fm_0 \op\fm_1 \op\cdots\op\fg_\sfk
\end{equation}
is a parabolic subalgebra of $\fm_\bC$.  The reductive and nilpotent subalgebras in the Levi decomposition of $\fp_\ttT$ are, respectively, $\fg_0$ and 
\begin{equation} \nonumber 
  \fm_+ \ \dfn \ \fm_1 \op \cdots \op \fg_\sfk \, .
\end{equation}
Moreover, \eqref{E:mU} implies that 
$$
  \fm^\ell \ \dfn \ \fm_\ell \op \fm_{\ell+1} \op \cdots \op \fg_\sfk
$$
is a $\fp_\ttT$--module.  

Modulo the action of $G_\tad(\bC)$, the parabolic $\fp_\ttT$ contains our fixed Borel subalgebra $\fb$; that is $\fb \subset \tAd_g(\fp_\ttT)$ for some $g \in G_\tad(\bC)$.  Assuming $\fb \subset \fp_\ttT$, \eqref{E:b} and \eqref{E:Ti} imply
\begin{equation} \label{E:Tpos}
  \ttT \ = \ n^i \,\ttT_i \,, \quad\hbox{with } \ 0\le n^i \in \bZ \, .
\end{equation}
In this case, 
$$
  \fm_0 \ = \ \fz_\ttT \ \op \ \mss \,,
$$
where 
$$
  \fz_\ttT \ = \ \tspan_\bC\{ \ttT_i \ | \ n^i > 0 \}
$$
is the center of $\fm_0$, and 
$$
  \mss \ = \ [\fm_0 , \fm_0]
$$
is the semisimple subalgebra of $\fm_0$ generated by the simple roots $\Sigma(\fm_0) = \{ \s_i \ | \ n^i = 0 \}$.

The second consequence of \eqref{E:mU} is that each $U_\ell$ is a $\fm_0$--module.  Specializing to the case that $U = \fm_\bC$, the Killing form $B$ on $\fm_\bC$ induces an isomorphism
\begin{equation} \label{E:m*}
  \fm_\ell^* \ \simeq \ \fm_{-\ell}
\end{equation}
of $\fm_0$--modules.

\subsubsection{Grading elements associated to parabolic subalgebras}

We now describe a bijective correspondence between nonempty subsets $I \subset \{1,\ldots,r\}$, and parabolic subalgebras $\fp_I \subset \fm_\bC$ that contain the fixed Borel $\fb$, \cite[\S3.2.1]{MR2532439}.  Given $I$, define a grading element
\begin{equation} \label{E:gr}
  \ttT_I \ \dfn \ \sum_{i\in I} \ttT_i \ \in \ \tHom(\rtL,\bZ) \, .
\end{equation}
Let $\fp_I$ be the parabolic subalgebra \eqref{E:p} determined by $\ttT_I$.  Then $\fb \subset \fp_I$. Conversely, every parabolic $\fp \supset \fb$ is of the form $\fp = \fp_I$ for 
\begin{equation}\label{E:Ip}
  I \ = \ I(\fp) \ \dfn \ \{ i \ | \ \fm_{-\sigma_i} \not\subset \fp \} 
  \ = \ \{ i \ | \ \s_i(\ttT_I) \ge 0 \} \,.
\end{equation}

\begin{example*}
If $I = \{1,\ldots,r\}$, then the parabolic subalgebra $\fp_I$ is the Borel $\fb$.  If $I = \{ i\}$ consists of a single element, then the parabolic $\fp_I$ is a maximal parabolic.
\end{example*}

\begin{remark} \label{R:g1}
Recall the parabolic $\fp_\ttT$ of \eqref{E:p}.  Without loss of generality, $\fb \subset \fp_\ttT$, and $\ttT$ is of the form \eqref{E:Tpos}.  If $I = \{ i \ | \ n^i > 0\}$, then $\fp_\ttT = \fp_I$.  If $\op_\ell \fg_\ell(I)$ and $\op_s\fg_s(\ttT)$ are the $\ttT_I$ and $\ttT$--graded decompositions of $\fg_\bC$, then $\fg_0(\ttT) = \fg_0(I)$ and $\fg_+(\ttT) = \fg_+(I)$.  However, $\fg_\ell(I)=\fg_\ell(\ttT)$, for all $\ell$, if and only if $\ttT = \ttT_I$.  Moreover, $\fg_1(\ttT)$ generates $\fg_+(\ttT)$, as an algebra, if and only if $\ttT = \ttT_I$.
\end{remark}

\begin{definition*}
Given the parabolic $\fp$, we say that $\ttT_I$ is the \emph{grading element associated with the parabolic}.
\end{definition*}

\subsection{Hodge structures and grading elements} \label{S:HS}

In this section we will establish a correspondence between Hodge structures $\varphi$ and grading elements $\ttT$.  Roughly speaking, grading elements are infinitesimal Hodge structures, and we will show the following.
\begin{a_list_nem}
\item To every Hodge group $G$ and polarized Hodge structure $\varphi : S^1 \to T \subset G_\tad $ on $(\fm,B)$ we may canonically associate a grading element $\ttT_\varphi \in \tHom(\rtL,\bZ)$.  See \eqref{E:dfnT} and Lemma \ref{L:HSgr}.
\item Conversely, given a complex semisimple Lie algebra $\fm_\bC$, a Cartan subalgebra $\ft_\bC \subset \fm_\bC$ and a grading element $\ttT \in \tHom(\rtL,\bZ)$, there exists an integral form $\fm$ of $\fm_\bC$ and polarized Hodge structure $\varphi : S^1 \to \tAut(\fm_\bR,B)$ determined by $\ttT$.  The precise statement is given by Proposition \ref{P:HSgr}.
\end{a_list_nem}

Let $G$ be a Hodge group.  The connected, real Lie groups $G_\bR = G_\Lambda$ with Lie algebra $\fg_\bR$ are indexed by sub-lattices $\rtL \subset \Lambda \subset \wtL$.  The fundamental group and center satisfy, $\pi_1(G_\Lambda) = \wtL/\Lambda$ and $Z(G_\Lambda) = \Lambda/\rtL$.  The \emph{adjoint form} is $G_\tad = G_{\rtL}$, and the group $G_{\wtL}$ is simply connected.  Let 
$$
  \Lambda^* \ \dfn \ \tHom(\Lambda,2\pi\bi\bZ) \,.
$$
Then the maximal torus of $G_\Lambda$ is compact by Theorem \ref{T:HR}, and given by
$$
  T \ = \ \ft/\Lambda^* \,.
$$  
In general, a $\fg_\bR$--module $U$ admits the structure of a $G_\Lambda$--module if and only if the weights $\Lambda(U)$ of $U$ are contained in $\Lambda$.  

Suppose that $\varphi : S^1 \to G$ is a homomorphism of real algebraic groups with image $\varphi(S^1) \subset T = \ft_\bR / \Lambda^*$.  Let $\ttL_\varphi\in\Lambda^*$ be the lattice point such that the line $\bR\ttL_\varphi \subset \Lambda^*$ projects to the circle $\varphi(S^1) \subset T$.  That is, $\varphi(e^{2\pi\bi t}) = t \ttL_\varphi \ \tmod \ \Lambda^*$, for $t \in\bR$.  Fix a representation $\rho : G\to \tAut(V,Q)$ defined over $\bQ$.  Suppose that $\rho\circ\varphi$ defines a polarized Hodge structure on $V$.  Then the group element $\rho\circ\varphi(e^{2\pi\bi t}) \in \tAut(V_\bC)$ acts on $V^{p,q}$ by $e^{2\pi\bi(p-q) t} \mathbbm{1}$.  Taking the derivative $\partial/\partial t$ at $t=0$ implies that $\ttL_\varphi$ acts on $V^{p,q}$ by $2\pi\bi(p-q)\mathbbm{1}$. 
Define $\ttT_\varphi\in\tHom(\Lambda , \half\bZ) \subset \bi \ft\subset \ft_\bC$ by 
\begin{equation} \label{E:dfnT}
  4\pi\bi \ttT_\varphi \ \dfn \ \ttL_\varphi \,.
\end{equation}
Then $\ttT_\varphi$ acts on $V^{p,q}$ by the scalar $(p-q)/2 = p - n/2$.  Therefore, the $\ttT_\varphi$--graded decomposition of $V_\bC$ is
\begin{equation} \label{E:Vdecomp}
\renewcommand{\arraystretch}{1.3}
\begin{array}{rcl}
  V_\bC & = & V_{m/2} \op V_{m/2-1} \op\cdots\op V_{1-m/2} \op V_{-m/2} \,,
  \hbox{ where}\\
  V_\ell & = & \{ v \in V_\bC \ | \ \ttT_\varphi(v) = \ell v \}
  \quad\hbox{and}\quad V^{p,q} \ = \ V_{(p-q)/2} \ = \ V_{p-n/2}\,;
\end{array}
\end{equation}
above, we choose $0 \le m$ to be the smallest integer such that $V_{m/2} \not= 0$.  The Hodge filtration is 
\begin{equation} \label{E:F}
  F^p_\varphi \ = \ V_{p-n/2} \,\op\, V_{p+1-n/2}\,\op\cdots\op\,V_{m/2} \,.
\end{equation}
Since weights $\mu \in \wtL$ are real--valued on $\ttT_\varphi \in \bi \fm_\bR$, we have 
\begin{equation} \label{E:Vconj}
  \overline{V_\ell} \ \simeq \ V_{-\ell} \,,
\end{equation}
as $\fm_0$--modules.  Moreover, the $G$--invariant polarization $Q$ yields the $\fm_0$--module identification
\begin{equation} \label{E:Vdual}
  V_\ell^* \ \simeq \ V_{-\ell} \, .
\end{equation}

\begin{remark}\label{R:phii}
Since $\ttT_\varphi$ acts on $V_\ell$ by the scalar $\ell \in \half\bZ$, it follows that $\ttL_\varphi$ acts on $V_\ell$ by $4\pi\bi\ell \mathbbm{1}$.  Therefore, if $z\in S^1$, then $\varphi(z)$ acts on $V_\ell$ by $z^{2\ell}\mathbbm{1}$.  In particular, $\varphi(\bi)$ acts on $V_\ell$ by the scalar $\bi^{2\ell}$.  Note that, $m$ is even (respectively, odd) if and only if $2\ell$ is even (respectively, odd) for each $\ttT_\varphi$--eigenvalue $\ell$.  Therefore, $\varphi(\bi)$ acts on $V_\ell$ by $\pm\one$ (respectively, $\pm\bi\one$) if and only if $m$ is even (respectively, odd). 
\end{remark} 

\begin{remark}[Hodge tensors] \label{R:HT}
Let $T(V) = \op_{k,\ell} V^{\ot k} \ot (V^*)^{\ot\ell}$ denote the tensor algebra of $V$.  The Lie algebra action of $\fg_\bC$ on $V_\bC$ induces a Lie algebra action on the complex tensor algebra $T(V_\bC) = T(V) \ot_\bQ \bC$.  Let $\op_{q\in\bQ} T_q$ be the $\ttT_\varphi$--graded decomposition \eqref{E:grU} of $T(V_\bC)$ into $\ttT_\varphi$--eigenspaces.  The Hodge tensors (or Hodge classes)  
$$
  \mathrm{Hg}_\varphi \ = \ T_0 \,\cap\, T(V)
$$
are the $\bQ$--tensors in the kernel $T_0$ of $\ttT_\varphi$.
\end{remark}

By definition \eqref{E:dfnT}, $\ttT_\varphi \in \tHom(\Lambda,\half\bZ)$.  

\begin{lemma} \label{L:HSgr}
Let $(V,\rho,\varphi)$ be a Hodge representation of a semisimple, $\bQ$--algebraic Lie group $G_\Lambda$.  Then $\ttT_\varphi$ is a grading element.  That is, $\ttT_\varphi \in \tHom(\rtL,\bZ)\,\cap\,\tHom(\Lambda,\half\bZ)$.
\end{lemma}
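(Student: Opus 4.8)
The plan is to treat the two membership conditions separately. The inclusion $\ttT_\varphi \in \tHom(\Lambda,\half\bZ)$ is immediate from the defining relation \eqref{E:dfnT}: since $\ttL_\varphi \in \Lambda^* = \tHom(\Lambda,2\pi\bi\bZ)$, dividing by $4\pi\bi$ sends $\Lambda$ into $\half\bZ$. The entire content of the lemma is therefore the inclusion $\ttT_\varphi \in \tHom(\rtL,\bZ)$, i.e. the assertion that $\a(\ttT_\varphi) \in \bZ$ for every root $\a \in \Delta$ (equivalently, for every element of $\rtL$, since $\rtL$ is generated by the roots). The key observation is that the roots are precisely the nonzero weights of the adjoint representation, so to control $\a(\ttT_\varphi)$ I would pass from the given representation $\rho$ to $\tAd$.

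First I would invoke Theorem \ref{T:Ad}: since $(V,\rho,\varphi)$ is a Hodge representation of $G = G_\Lambda$, so is $(G,\tAd,\varphi)$, and hence $(\fm,B,\tAd\circ\varphi)$ is a polarized Hodge structure, say of weight $n$. Because the polarizing form here is the Killing form $B$, which is \emph{symmetric}, Definition \ref{D:phs} forces $(-1)^n = 1$; that is, the weight $n$ of the adjoint Hodge structure is even. Now I would apply the eigenvalue computation underlying \eqref{E:Vdecomp} verbatim, with $V$ replaced by $\fm$ and $\rho$ by $\tAd$: the element $\ttT_\varphi$ acts on the Hodge piece $\fm^{p,q}$ by the scalar $(p-q)/2 = p - n/2$. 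Since $n$ is even, each such eigenvalue lies in $\bZ$.

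Finally I would relate these eigenvalues to the roots. Because $\ttT_\varphi \in \bi\ft \subset \ft_\bC$, the $\ttT_\varphi$--eigenspace decomposition of $\fm_\bC$ (which is exactly the Hodge decomposition $\op \fm^{p,q}$) is a coarsening of the root-space decomposition; in particular each root space $\fm_\a$ is contained in a single $\ttT_\varphi$--eigenspace, on which $\ttT_\varphi$ acts by $\a(\ttT_\varphi)$. Hence $\a(\ttT_\varphi)$ equals the eigenvalue $(p-q)/2$ of the Hodge piece containing $\fm_\a$, which the previous step showed to be an integer. As $\a$ ranges over all roots, this yields $\ttT_\varphi \in \tHom(\rtL,\bZ)$, completing the proof.

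The step I expect to be the crux is the parity argument: recognizing that the adjoint Hodge structure has even weight, and that this is exactly what upgrades the a priori half--integral eigenvalues $(p-q)/2$ to integers on the root spaces. The remainder is bookkeeping, once Theorem \ref{T:Ad} has been used to replace $\rho$ by the adjoint representation so that the relevant weights become the roots.
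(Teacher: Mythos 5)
Your proof is correct and follows essentially the same route as the paper: both arguments reduce to the induced Hodge structure on $\fg_\bC$, observe that each root space $\fm_\a$ lies in a single $\ttT_\varphi$--eigenspace (since $\ttT_\varphi\in\ft_\bC$), and conclude that $\a(\ttT_\varphi)$ equals an integral Hodge eigenvalue. The only cosmetic difference is how integrality of those eigenvalues is obtained: the paper cites \eqref{E:ctmt} to get the weight--zero decomposition $\fg_\bC=\op\,\fg^{-\ell,\ell}$ directly, whereas you route through Theorem \ref{T:Ad} and the symmetry of the Killing form to force the weight to be even — both are valid.
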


\begin{proof}
It is a consequence of \eqref{E:ctmt} that $\varphi$ induces a Hodge structure of weight zero on the Lie algebra $\fg$, cf. \cite[(I.B.2)]{MR2918237}.  Explicitly, 
\begin{equation} \label{E:Hm}
  \fg_\bC \ = \ 
  \op_{\ell\in\bZ} \, \fg^{-\ell,\ell} \,,
  \quad\hbox{where } 
  \fg^{-\ell,\ell} \ = \ 
  \{ \z\in\fg_\bC \ | \ \z(V^{p,q}) \subset V^{p-\ell,q+\ell} \ \forall \ p,q \} \, .
\end{equation}
By \eqref{E:Vdecomp}, each $\fm^{-\ell,\ell}$ is the $\ttT_\varphi$--eigenspace of eigenvalue $-\ell$.  Since $\ttT_\varphi \in \ft_\bC$, this implies that each root space $\fm_\a \subset \fm_\bC$ is contained in one of the $\fm^{-\ell,\ell}$, and $\a(\ttT_\varphi) = -\ell \in \bZ$.
\end{proof}

Let $\fm_\bC = \op \fm_\ell$ be the $\ttT_\varphi$--graded decomposition \eqref{E:grm} of $\fm_\bC$, and let $\fp_\varphi\subset \fm_\bC$ be the associated parabolic subalgebra \eqref{E:p}.  Then $\fp_\varphi$ is the stabilizer in $\fm_\bC$ of the filtration \eqref{E:F}.  Since $\fp_\varphi$ is parabolic, we may choose the Borel subalgebra $\fb$ so that $\fb \subset \fp_\varphi$.  As observed in \eqref{E:Tpos}, 
\begin{equation} \label{E:preT}
  \ttT_\varphi \ = \ \sum n_i \ttT_i \,,\quad\hbox{with}\quad
  0 \le n_i \in \bZ\,.
\end{equation}
(We will see in Section \ref{S:TI} that, for the purposes of considering variations of Hodge structure, we may restrict to the case that $n_i = 0,1$.)

Let $\fh_\varphi \subset \fm_\bR$ be the Lie algebra of $H_\varphi$.  As noted in Theorem \ref{T:HR}, $H_\varphi$ is the centralizer of $\varphi(S^1)$.  It follows from \eqref{E:grm} that 
\begin{equation} \label{E:hpC}
  \fh_\varphi(\bC) \ \dfn \ \fh_\varphi \ot_\bR\bC = \fm_0 \, .
\end{equation}
The real form $\fm_\bR$ defines complex conjugation on $\fm_\bC = \fm_\bR \ot_\bR \bC$.  Since the roots are real--valued on $\ttT_\varphi \in \bi\ft \subset \bi\fm_\bR$, we have 
\begin{equation} \label{E:mellconj}
  \overline{\fm_\ell} \ \simeq \ \fm_{-\ell} \ \simeq \ \fm_\ell^*
\end{equation}
as $\fg_0$--modules.  (The first identification is a special case of \eqref{E:Vconj}, and the second identification is \eqref{E:m*}.)  

Since $T$ is compact, the algebra $\fm_\bR$ admits a Cartan decomposition $\fm_\bR = \fk_\bR \op \fq_\bR$ with $\ft \subset \fk_\bR$, cf. \cite[Proposition 6.59]{MR1920389}.  Moreover, the roots $\Delta = \Delta(\fm_\bC , \ft_\bC)$ take purely imaginary values on $\ft$; as a consequence, either $\fm_\a\subset \fk_\bC$ or $\fm_\a \subset \fp_\bC$, cf.  \cite[p.390]{MR1920389}.  Recall that the Killing form $B$ is negative-definite on $\fk_\bR$ and positive-definite on $\fq_\bR$.  Taken with Remark \ref{R:phii}, these observations yield the following.

\begin{proposition}[{\cite[(IV.B.3)]{MR2918237}}] \label{P:mB}
Let $\Lambda = \rtL$, so that $G_\Lambda = G_\tad$ is the adjoint form of the group.  Given a group homomorphism $\varphi:S^1 \to T = \ft_\bC/\rtL$, let $\ttT_\varphi$ be the associated element of $\tHom(\rtL,\half\bZ)$, cf. \eqref{E:dfnT}.  Let $\fm_\bC = \op_{2\ell\in\bZ}\,\fm_\ell$ be the $\ttT_\varphi$--graded decomposition \eqref{E:grm}.  Then $\varphi$ gives a polarized Hodge structure on $(\fm,B)$ if and only if $\ttT_\varphi$ is a grading element
(that is, $\ttT_\varphi \in \tHom(\rtL,\bZ)$ and 
\begin{equation}\label{E:kqroots}
  \fk_\bC \ = \ \fm_\mathrm{even} \ \dfn \ \op_{\ell\in\bZ} \,\fm_{2\ell}
  \quad\hbox{and}\quad
  \fq_\bC \ = \ \fm_\mathrm{odd} \ \dfn \ \op_{\ell\in\bZ} \,\fm_{2\ell+1} \, .
\end{equation}
\end{proposition}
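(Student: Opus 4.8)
The plan is to verify the two Hodge--Riemann bilinear relations of Definition~\ref{D:phs} directly against the $\ttT_\varphi$--grading $\fm_\bC=\op_\ell\fm_\ell$, treating $\tAd\circ\varphi$ as the weight--zero Hodge structure on $\fg$ with polarization a fixed sign of $B$ and Weil operator $\varphi(\bi)$. Three facts drive the computation. First, since $B$ is $\tAd$--invariant and $\ttT_\varphi\in\ft_\bC$, the graded pieces pair as $B(\fm_\ell,\fm_{\ell'})=0$ unless $\ell+\ell'=0$; this is \eqref{E:m*}. Second, by Remark~\ref{R:phii} the operator $\varphi(\bi)$ acts on $\fm_\ell$ by $\bi^{2\ell}=(-1)^\ell$ once $\ell\in\bZ$. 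Third, complex conjugation relative to $\fm_\bR$ carries $\fm_\ell$ to $\fm_{-\ell}$ (cf.\ \eqref{E:mellconj}) and preserves the decomposition $\fm_\bC=\fk_\bC\op\fq_\bC$, on which the Hermitian form $\z\mapsto B(\z,\bar\z)$ is negative definite on $\fk_\bC$ and positive definite on $\fq_\bC$, with $B(\fk_\bC,\fq_\bC)=0$.

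I would first dispose of integrality and the first relation. The structure $\tAd\circ\varphi$ is a Hodge structure on $\fg$ only when it has a well--defined weight, i.e.\ when $\varphi(-1)$ acts as a scalar; as $\varphi(-1)$ acts on $\fm_\ell$ by $(-1)^{2\ell}$ and $\fm_0\supset\ft_\bC$ contributes $+1$, this scalar condition is exactly $\ell\in\bZ$ for every eigenvalue, i.e.\ $\ttT_\varphi\in\tHom(\rtL,\bZ)$. (In the forward direction this is Lemma~\ref{L:HSgr}.) Granting integrality, the weight is $0$ and the Hodge filtration is $F^p_\varphi=\op_{k\ge p}\fm_k$ by \eqref{E:F}; then for $k\ge p$ and $k'\ge 1-p$ one has $k+k'\ge 1\neq 0$, so by the first fact $B(F^p_\varphi,F^{1-p}_\varphi)=0$ automatically. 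Thus the first Hodge--Riemann relation carries no information, and everything rests on the positivity relation.

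For the second relation I would expand $v=\sum_\ell v_\ell$ and use the first and third facts to collapse the cross terms:
\begin{equation} \nonumber
  B\bigl(v,\varphi(\bi)\bar v\bigr)\ =\ \sum_\ell (-1)^\ell\,B\bigl(v_\ell,\overline{v_\ell}\bigr)\,.
\end{equation}
Hence positivity for all $v\neq0$ is equivalent to asking that, for each $\ell$, the Hermitian form $v_\ell\mapsto (-1)^\ell B(v_\ell,\overline{v_\ell})$ be definite of the single sign prescribed by the polarization convention. Writing $\fm_\ell=(\fm_\ell\cap\fk_\bC)\op(\fm_\ell\cap\fq_\bC)$ --- legitimate because $\fm_\ell$ is a sum of root spaces together with (when $\ell=0$) $\ft_\bC\subset\fk_\bC$, and each root space lies in $\fk_\bC$ or $\fq_\bC$ --- the third fact shows that $(-1)^\ell B(\cdot,\overline{\cdot})$ takes opposite signs on the two summands. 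So definiteness holds precisely when one summand vanishes, and matching the sign of $B$ against the parity $(-1)^\ell$ forces $\fm_\ell\subset\fk_\bC$ for $\ell$ even and $\fm_\ell\subset\fq_\bC$ for $\ell$ odd; summing over $\ell$ this is exactly \eqref{E:kqroots}. Reading the equivalences in both directions yields the proposition.

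The main obstacle is the ``only if'' half of this last step: one must argue that positivity genuinely \emph{forces} each eigenspace $\fm_\ell$ to be pure --- wholly inside $\fk_\bC$ or wholly inside $\fq_\bC$ --- rather than a genuine mixture. This is exactly where the indefiniteness of $\z\mapsto B(\z,\bar\z)$ on a mixed $\fm_\ell$ is used: any nonzero vector in the ``wrong'' summand would violate the required definiteness. A secondary point that needs care is the bookkeeping of the sign convention relating the polarization to the Killing form $B$ (the relevant positive form is $-B$ on the compact directions), so that the parity assignment emerges as in \eqref{E:kqroots} and not reversed.
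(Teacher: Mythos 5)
Your proposal is correct and follows essentially the same route as the paper, which derives the proposition by combining the Cartan decomposition $\fm_\bR=\fk_\bR\op\fq_\bR$ with $\ft\subset\fk_\bR$, the fact that each root space lies wholly in $\fk_\bC$ or $\fq_\bC$, the definiteness of $B$ on $\fk_\bR$ and $\fq_\bR$, and Remark \ref{R:phii} (the action of $\varphi(\bi)$ on $\fm_\ell$ by $\bi^{2\ell}$). Your write-up simply makes explicit the verification of the two Hodge--Riemann relations and the integrality step that the paper leaves to the reader and to the cited reference.
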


\noindent
The set of \emph{compact roots} is $\Delta(\fk_\bC)$; the set of \emph{noncompact roots} is $\Delta(\fq_\bC)$.  Together \eqref{E:preT} and \eqref{E:kqroots} are equivalent to 
\begin{equation} \label{E:Tphi}
   \ttT_\varphi \ = \ \sum n_j \ttT_j \,, \quad\hbox{with}\quad 
   0 \le n_j \ \equiv \ \left\{ \begin{array}{ll}
  0 \ \tmod \ 2\,, & \hbox{ if } \s_j \in \Delta(\fk_\bC) \\
  1 \ \tmod \ 2\,, & \hbox{ if } \s_j \in \Delta(\fq_\bC) \,.
  \end{array} \right.
\end{equation}

Proposition \ref{P:HSgr} gives a converse to the composite Lemma \ref{L:HSgr} and Proposition \ref{P:mB}: given $\fm_\bC$ and a grading element $\ttT\in\tHom(\rtL,\bZ)$, there exists a integral form $\fm$ of $\fm_\bC$ such that the circle $\varphi$ associated to $\ttT$ by \eqref{E:dfnT} gives a Hodge structure on $(\fm,B)$.

\begin{proposition} \label{P:HSgr}
Fix a complex semisimple Lie algebra $\fm_\bC$ and grading element $\ttT\in\tHom(\rtL,\bZ)$.  Let $\fg_\bC = \op_\ell\fg_\ell$ be the $\ttT$--graded decomposition \eqref{E:grm}.  Then there exists a real form $\fm_\bR$ of $\fm_\bC$, defined over $\bZ$, a Cartan subalgebra $\ft \subset \fm_\bR$, and a Cartan decomposition $\fm_\bR = \fk_\bR \op \fq_\bR$ such that $\ft \subset \fk_\bR$ (the torus $T\subset G_\bR$ is compact), $\fk_\bC = \fm_\mathrm{even}$ and $\fq_\bC = \fm_\mathrm{odd}$.  In particular, the circle $\varphi : S^1 \to G_\tad$ determined by \eqref{E:dfnT} defines a Hodge structure $(\fm,B,\varphi)$.
\end{proposition}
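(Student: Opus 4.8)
The plan is to build the desired real form directly from the grading element, combining a compact real form with the involution that $\ttT$ induces. Because $\ttT\in\tHom(\rtL,\bZ)$ is a genuine grading element, the eigenvalues $\ell$ in $\fm_\bC=\op_\ell\fm_\ell$ are integers, so $\theta\dfn\tAd(\texp(\pi\bi\ttT))$, which acts on $\fm_\ell$ by the scalar $(-1)^\ell$, is an involutive automorphism of $\fm_\bC$ (involutivity and bracket-compatibility both follow from $\ell\in\bZ$ and $[\fm_\ell,\fm_m]\subset\fm_{\ell+m}$). First I would fix a compact real form $\fu$ with conjugation $\tau$ whose Cartan subalgebra is $\ft$; this is available because the roots are imaginary-valued on $\ft$ and $\bi\ttT\in\ft$, so $\texp(\pi\bi\ttT)$ lies in the compact torus and $\theta$ preserves $\fu$. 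As $\tau$ sends $\fm_\ell$ to $\fm_{-\ell}$, it commutes with $\theta$. Writing $\fk_0\dfn\fu\cap\fm_\mathrm{even}$ and $\fp_0\dfn\fu\cap\fm_\mathrm{odd}$, I would set
$$
  \fm_\bR \ \dfn \ \fk_0\,\op\,\bi\,\fp_0 \ = \ \{\,X\in\fm_\bC \ | \ \tau\theta(X)=X\,\}\,,
  \qquad \fk_\bR\dfn\fk_0\,,\quad \fq_\bR\dfn\bi\,\fp_0\,.
$$

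That this is the required Cartan decomposition is then driven entirely by the grading. The relations $[\fk_\bR,\fk_\bR]\subset\fk_\bR$, $[\fk_\bR,\fq_\bR]\subset\fq_\bR$ and $[\fq_\bR,\fq_\bR]\subset\fk_\bR$ are the parity rules $[\fm_\mathrm{even},\fm_\mathrm{even}]\subset\fm_\mathrm{even}$, etc., once the factors of $\bi$ are tracked. Since $B$ is negative-definite on $\fu$, it stays negative-definite on $\fk_\bR=\fk_0$, while $B(\bi X,\bi X)=-B(X,X)$ makes it positive-definite on $\fq_\bR=\bi\fp_0$; thus $\theta|_{\fm_\bR}$ is a Cartan involution. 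Complexifying, $\fk_\bC=\fm_\mathrm{even}$ and $\fq_\bC=\fm_\mathrm{odd}$, because $\fk_0$ and $\fp_0$ are real forms of the $\tau$-stable spaces $\fm_\mathrm{even}$ and $\fm_\mathrm{odd}$. Finally $\ft\subset\fm_0\subset\fm_\mathrm{even}$ and $\ft\subset\fu$ give $\ft\subset\fk_\bR$, so the maximal torus is compact; this is exactly \eqref{E:kqroots}.

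The main obstacle is the claim that $\fm_\bR$ is defined over $\bZ$, and for this I would produce an explicit integral basis. Fixing a Chevalley basis $\{H_i,E_\a\}$ (so that the structure constants $N_{\a,\b}$ and the values $\a(H_i)$ are integers, and every coroot satisfies $H_\a\in\tspan_\bZ\{H_i\}$), I would take the vectors $\bi H_i$, together with $E_\a-E_{-\a}$ and $\bi(E_\a+E_{-\a})$ for positive roots with $\a(\ttT)$ even, and $E_\a+E_{-\a}$ and $\bi(E_\a-E_{-\a})$ for positive roots with $\a(\ttT)$ odd. These visibly span $\fm_\bR$ over $\bR$, and the delicate point is that every structure constant in this basis is an \emph{integer}: brackets among the root-vector combinations reproduce combinations of the \emph{same} kind scaled by the integers $N_{\a,\b}$ (the identities $\bi(E_\c-E_{-\c})=\bi(E_\c-E_{-\c})$ and $\bi(E_\c+E_{-\c})$ convert any stray factor of $\bi$ back into a basis vector of the correct parity), while the brackets that land in $\ft_\bC$ always carry a compensating factor of $\bi$---for instance $[E_\a-E_{-\a},\,\bi(E_\a+E_{-\a})]=2\bi H_\a$---so they re-expand integrally over $\{\bi H_i\}$ via $H_\a\in\tspan_\bZ\{H_i\}$. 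Hence the $\bZ$-span $\fm_\bZ$ of these vectors is closed under the bracket and $\fm_\bR=\fm_\bZ\ot_\bZ\bR$.

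With the real form, its integral structure, and \eqref{E:kqroots} established, the final clause is immediate from Proposition \ref{P:mB}: $\ttT$ is a grading element and $\fk_\bC=\fm_\mathrm{even}$, $\fq_\bC=\fm_\mathrm{odd}$, so the circle $\varphi:S^1\to G_\tad$ associated to $\ttT$ by \eqref{E:dfnT} polarizes the weight-zero Hodge structure on $(\fm,B)$.
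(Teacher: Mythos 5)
Your proposal is correct and ultimately constructs exactly the real form in the paper's proof: the basis $\{\bi H_i\}\cup\{E_\a-E_{-\a},\,\bi(E_\a+E_{-\a})\ |\ \a(\ttT)\ \hbox{even}\}\cup\{E_\a+E_{-\a},\,\bi(E_\a-E_{-\a})\ |\ \a(\ttT)\ \hbox{odd}\}$ coincides with the paper's $\{\bi h_i\}\cup\{\z_\a,\bi\x_\a\}\cup\{\bi\z_\a,\x_\a\}$, and your Chevalley-basis bracket computations are precisely the verification the paper records (and then declares ``straightforward to confirm''). The only difference is your conceptual preamble realizing $\fm_\bR$ as the fixed set of $\tau\,\tAd(\texp(\pi\bi\ttT))$ for a compact conjugation $\tau$, which is a pleasant gloss but does not change the substance of the argument.
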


\begin{proof}
Let $\{ x_\a \ | \ \a\in\Delta\} \,\cup\, \{ h_i \ | \ 1\le i \le r\}$ be a Chevalley basis of $\fm_\bC$, \cite[\S25.2]{MR499562}.  That is, $x_\a\in\fm_\a$ and $h_i\in\ft_\bC$, and 
\begin{circlist}
\item $[x_{\s_i} , x_{-\s_i}] = h_i$ and $[h_i , x_\a ] \in \bZ x_\a$, for all $1\le i\le r$ and $\a\in\Delta$;
\item if $\a,\b,\a+\b\in\Delta$ and $[x_\a,x_\b] = c_{\a,\b}\,x_{\a+\b}$, then $c_{-\a,-\b} = -c_{\a,\b} \in \bZ$.
\end{circlist}
Set $\z_\a = x_\a - x_{-\a}$ and $\xi_\a = x_\a + x_{-\a}$.  Then $\z_{-\a} = -\z_\a$ and $\x_{-\a} = \x_\a$.  If $\b \not=\pm\a$, then 
\begin{eqnarray*}
  [\z_\a , \z_\b] & = & c_{\a,\b}\,\z_{\a+\b} \,+\, c_{-\a,\b}\,\z_{\a-\b} \,,\\
  {{[}\z_\a , \x_\b{]}} & = & c_{\a,\b}\,\x_{\a+\b} \,+\, c_{\a,-\b}\,\x_{\a-\b} \,,\\
  {{[}\x_\a , \x_\b{]}} & = & c_{\a,\b}\,\z_{\a+\b} \,+\, c_{\a,-\b}\,\z_{\a-\b} \,,
\end{eqnarray*}
while $[\z_\a,\x_\a] = 2\,[x_\a,x_{-\a}]\in\tspan_\bZ\{h_i \ | \ 1\le i \le r \}$, and $[h_i,\x_\a] \in \bZ \z_\a$ and $[h_i , \z_\a] \in\bZ \x_\a$.  

Define $\fm = \fk \op \fq$ by
\begin{eqnarray*}
  \fk & = & \tspan\{ \bi h_i \ | \ 1\le i \le r \} \ \op \ 
  \tspan\{ \z_\a \,,\, \bi\xi_\a \ | \ \a(\ttT) \hbox{ even}\} \,,\\
  \fq& = & \tspan\{ \bi \z_\a \,,\, \x_\a \ | \ \a(\ttT) \hbox{ odd}\} \,.
\end{eqnarray*}
Then the algebra $\fm$ is defined over $\bZ$.  It follows that the Killing form $B : \fm \times \fm \to \bZ$ is defined over $\bZ$.  It is  straightforward to confirm that $\fm_\bR$ is a real form of $\fm_\bC$ with Cartan subalgebra $\ft_\bR = \tspan_\bR\{\bi h_i \ | \ 1 \le i \le r\}$, and Cartan decomposition $\fm_\bR = \fk_\bR \op \fq_\bR$.
\end{proof}

We close this section with a classification of Hodge representations (Theorem \ref{T:HdgRep}).  First, we review dual representations and real/complex/quaternionic representations in the two remarks below.

\begin{remark} \label{R:mu*}
Let $w_0$ be the longest element in the Weyl group $W = W(\fm_\bC,\ft_\bC)$.  If $U$ is the irreducible $\fm_\bC$--module of highest weight $\m$, then 
\begin{equation} \label{E:mu*}
  \m^* \ = \ -w_0(\m)
\end{equation}
is the highest weight of the dual $U^*$, and $-\m^*$ is the lowest weight of $U$.

The permutation $-w_0 : \Delta \to \Delta$ preserves the simple roots $\Sigma$, and so may be represented by an involution of the Dynkin diagram.  The following is Theorem 2.13 of \cite[3.2.6]{MR1349140}:  For $\fg = \fsl_n\bC$ ($n\ge3$), $\fso_{4n+2}(\bC)$ ($n\ge1$), and $\fe_6$, the canonical involution $-w_0$ of the simple roots coincides with the only nontrivial automorphism of the Dynkin diagram.  For all other simple Lie algebras $\fg$, the involution is the identity transformation.
\end{remark}

\begin{remark} \label{R:RCQ}
Let $\rho:G \to \tAut(V)$ be a representation defined over $\bQ$.  Assume $V_\bR$ is irreducible.  Then there exists an irreducible complex $G_\bC$--representation $U$, of highest weight $\m \in \Lambda$, such that one of the following holds:
\begin{circlist}
  \item $V_\bC = U$.  In this case $U \simeq U^*$ and $U$ is \emph{real}.
  \item $V_\bC = U \op U^*$ and $U \not\simeq U^*$.  We say $U$ is \emph{complex}.
  \item $V_\bC = U \op U^*$ and $U \simeq U^*$.  We say $U$ is \emph{quaternionic}.
\end{circlist}
By \eqref{E:Vdecomp}, 
\begin{subequations} \label{SE:wt}
\begin{equation} \label{E:wt1}
  \begin{array}{l}
  \hbox{\emph{if $U$ is real or quaternionic, then $m = 2\mu(\ttT_\varphi)$.}}
\end{array}
\end{equation}
Assume $U \simeq U^*$, and define 
$$
  \ttH_\mathrm{cpt} \ = \ 
  2 \sum_{\s_i\in\Delta(\fk_\bC)} \ttT_i \ = \ 2 \sum_{n_i\in2\bZ} \ttT_i \,.
$$
Then $U$ is real if and only if $\m(\ttH_\mathrm{cpt})$ is even (and $U$ is quaternionic if and only if $\m(\ttH_\mathrm{cpt})$ is odd), cf. \cite[(IV.E.4)]{MR2918237}.

By Remark \ref{R:mu*}, the highest weight of $U^*$ is $\mu^* = -\w_0(\m)$.  The representation $U$ is complex if and only if $\m \not= \m^*$.  By \eqref{E:Vdecomp}, 
\begin{equation} \label{E:wt2}
  \begin{array}{l}
  \hbox{\emph{if $U$ is complex, then 
        $m = 2 \, \tmax \{ \mu(\ttT_\varphi) , \mu^*(\ttT_\varphi)\}$.}}
\end{array}
\end{equation}
\end{subequations}
\end{remark}

\begin{theorem}[{\cite[(IV.B.6)]{MR2918237}}] \label{T:HdgRep}
Fix a semisimple $\bQ$--algebraic Lie algebra $\fg$ and a grading element $\ttT\in\tHom(\rtL,\bZ)$.\footnote{Without loss of generality, $\ttT$ is of the form \eqref{E:Tpos}.}  Fix a lattice $\Lambda$ and let $G_\bR = G_\Lambda$.  Suppose that $\ttT\in\tHom(\Lambda,\half\bZ)$, so that \eqref{E:dfnT} determines a homomorphism $\varphi : S^1 \to T = \ft_\bR / \Lambda^*$ of $\bR$--algebraic groups.  Let $\rho:G \to \tAut(V)$ be a representation defined over $\bQ$.  Assume that $V_\bR$ is irreducible, and let $U$ be the associated irreducible representation of highest weight $\m \in \Lambda$, cf. Remark \ref{R:RCQ}.  Then $(V,\rho\circ\varphi)$ gives a polarized Hodge structure (and $(G,\rho,\varphi)$ is a Hodge representation), for an appropriate choice of invariant $Q$, if and only if \eqref{E:kqroots} holds.
\end{theorem}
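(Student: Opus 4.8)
The plan is to show that condition \eqref{E:kqroots} is precisely the condition under which the Weil operator $C \dfn \rho\circ\varphi(\bi)$ implements a Cartan involution on $\rho(\fm)$, and to use this to manufacture the polarization from the positive-definite metric supplied by a compact real form. At the conceptual level the theorem is already forced by two results in hand: Theorem \ref{T:Ad} reduces the assertion `$(G,\rho,\varphi)$ is a Hodge representation' to the adjoint case `$(G,\tAd,\varphi)$ is a Hodge representation', and Proposition \ref{P:mB} identifies the latter with \eqref{E:kqroots}. Since, by Definition \ref{D:Hrep}, the existence of a $Q$ making $(V,\rho\circ\varphi)$ a polarized Hodge structure is exactly the statement that $(G,\rho,\varphi)$ is a Hodge representation, composing these two results already yields the `if and only if'. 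I would nonetheless carry out the direct construction below, both to keep the argument self-contained and because it is what pins down the weight $n$ and the symmetry type of $Q$ via Remark \ref{R:RCQ}.

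First I would record the structural identity. By Remark \ref{R:phii}, $C$ acts on $V_\ell$ by $\bi^{2\ell}$; hence for $X\in\fm_s$ (so that $\rho(X):V_\ell\to V_{\ell+s}$) a one-line computation gives $C\,\rho(X)\,C^{-1} = (-1)^s\rho(X)$. Letting $\theta$ denote the Cartan involution of $\fm_\bR$ attached to the decomposition $\fm_\bR=\fk_\bR\op\fq_\bR$, condition \eqref{E:kqroots}---that $\fk_\bC$ is the sum of the even graded pieces and $\fq_\bC$ the sum of the odd ones---is thus equivalent to the operator identity $C\,\rho(X)\,C^{-1}=\rho(\theta X)$ for all $X\in\fm_\bR$. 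This is the single fact on which both implications turn.

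For the implication \eqref{E:kqroots}$\,\Rightarrow\,$polarizability, I would pass to the compact real form $\fm_u \dfn \fk_\bR\op\bi\fq_\bR$ and fix a positive-definite Hermitian form $h$ on $V_\bC$ invariant under the associated compact group $G_u$. Because $\varphi(S^1)\subset T\subset G_u$, the operator $C$ is $h$-unitary and the $\ttT$-eigenspace decomposition $V_\bC=\op_\ell V_\ell$ is $h$-orthogonal. I then set $Q(u,v)\dfn \pm\,h(u,C\bar v)$, the sign to be fixed below. Bilinearity is immediate, and the structural identity of the previous paragraph yields, for $X\in\fm_\bR$, the invariance $Q(\rho(X)u,v)+Q(u,\rho(X)v)=0$, so $Q$ is $G_\bR$-invariant. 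The $h$-orthogonality of the grading together with \eqref{E:Vdual} shows that $Q$ pairs $V_\ell$ with $V_{-\ell}$, so the first Hodge--Riemann relation $Q(F^p,F^{n-p+1})=0$ follows directly from the description \eqref{E:F} of the filtration. Finally, since $C$ is real and $C^2=\rho\circ\varphi(-1)=(-1)^n\one$ (with $n$ the weight determined via Remark \ref{R:RCQ}), one computes $Q(v,\varphi(\bi)\bar v)=\pm(-1)^n h(v,v)$; taking the sign to be $(-1)^n$ makes this the positive-definite form $h(v,v)$, which is the second Hodge--Riemann relation of Definition \ref{D:phs}.

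It remains to make $Q$ rational and of the correct symmetry, and to treat the converse; this is where Remark \ref{R:RCQ} enters and where the real work lies. The trichotomy real/complex/quaternionic dictates whether $V_\bC=U$ or $V_\bC=U\op U^*$, and hence the $\bQ$-structure and dimension of the space of invariant bilinear forms; uniqueness of the invariant form (up to scale) on the irreducible $V_\bR$ lets me replace the transcendental $Q$ above by a rational multiple without disturbing positivity, and formulas \eqref{E:wt1}--\eqref{E:wt2} identify $n$, hence the required symmetry $Q(u,v)=(-1)^nQ(v,u)$. For the converse, `polarizable $\Rightarrow$ \eqref{E:kqroots}', I would either invoke Theorem \ref{T:Ad} and Proposition \ref{P:mB} as above, or run the positivity argument in reverse: a polarization forces $Q(\,\cdot\,,\varphi(\bi)\overline{\,\cdot\,})$ to be positive definite with $C$ unitary for it, which forces $X\mapsto C\rho(X)C^{-1}$ to be the Cartan involution and therefore compels $\fk_\bC=\fm_\mathrm{even}$ and $\fq_\bC=\fm_\mathrm{odd}$. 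I expect the main obstacle to be precisely this rationality-and-symmetry bookkeeping across the three reality types, rather than the positivity itself, which \eqref{E:kqroots} delivers cleanly through the compact form.
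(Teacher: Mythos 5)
The paper does not prove this theorem: it is imported verbatim from Green--Griffiths--Kerr \cite[(IV.B.6)]{MR2918237}, so there is no internal proof to compare against. Judged on its own terms, your first paragraph already gives a complete derivation from results the paper quotes elsewhere: Theorem \ref{T:Ad} says $(G,\rho,\varphi)$ is a Hodge representation if and only if $(G,\tAd,\varphi)$ is, and Proposition \ref{P:mB} says the latter holds if and only if \eqref{E:kqroots}; since the theorem's conclusion is a condition on $\fg$ alone, this composition settles both directions (the adjoint representation factors through $G_\tad$, so the restriction $\Lambda=\rtL$ in Proposition \ref{P:mB} is harmless). Your direct construction is also the standard and correct one: the identity $C\rho(X)C^{-1}=(-1)^s\rho(X)$ on $\fm_s$, the unitarian trick on the compact form $\fk_\bR\op\bi\fq_\bR$, and the verification of invariance, of the first bilinear relation from the $\ttT$-grading, and of positivity from $C^2=(-1)^n\one$ all check out.

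The one genuine gap is the step you defer as ``bookkeeping,'' and it is not merely bookkeeping: in the real case $V_\bC=U$ the invariant bilinear form on $U$ is unique up to scale with a fixed symmetry type $\epsilon=\pm1$ (orthogonal versus symplectic), so if $\epsilon\neq(-1)^n$ no polarization of the required symmetry exists at all, and the theorem would fail. One must therefore prove that \eqref{E:kqroots} forces $\epsilon=(-1)^n$. This does work out --- using \eqref{E:Tphi} one finds $n\equiv 2\mu(\ttT_\varphi)\equiv\mu(\ttH_{\mathrm{ncpt}})\pmod 2$ with $\ttH_{\mathrm{ncpt}}=2\sum_{\sigma_i\in\Delta(\fq_\bC)}\ttT_i$, while the Frobenius--Schur parity $\epsilon=(-1)^{\mu(\ttH_{\mathrm{cpt}})+\mu(\ttH_{\mathrm{ncpt}})}$ combines with the real-type criterion ``$\mu(\ttH_{\mathrm{cpt}})$ even'' of Remark \ref{R:RCQ} to give $\epsilon=(-1)^{\mu(\ttH_{\mathrm{ncpt}})}=(-1)^n$ --- but this computation is the actual content of the irreducible-case analysis in \cite{MR2918237} and must appear if you intend the construction to be self-contained. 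Likewise ``uniqueness up to scale'' of the invariant form is false over $\bR$ in the complex and quaternionic cases (the space of invariant forms is then at least two-dimensional), so the descent to a rational $Q$ should instead use that positivity is an open condition together with density of the rational invariant forms. Your converse argument (a polarization makes $\tAd(C)$ a Cartan involution fixing $\ft$, hence acting by $(-1)^s$ on $\fm_s$) is fine, most cleanly run through the adjoint representation where faithfulness is automatic.
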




\section{Variations of Hodge structure}

\subsection{Tangent spaces} \label{S:ts}

Let $D = G_\bR/H_\varphi$ be a Hodge domain, and $\check D = G_\bC/P_\varphi$ the compact dual.  Set
$$
  o \ \dfn \ H_\varphi \ \in \ D \ = \ G_\bR/H_\varphi \, .
$$
As noted in Section \ref{S:GE}, $\fg_0$ is reductive; from \eqref{E:hpC} we see that $\fh_\varphi$ is reductive.  So there exists $\fh_\varphi$--module decomposition 
\begin{equation} \label{E:hphiperp}
  \fm_\bR \ = \ \fh_\varphi \,\op\, \fh_\varphi^\perp \,.
\end{equation}
Explicitly, from \eqref{E:grm}, \eqref{E:hpC} and \eqref{E:mellconj}, we see that
\begin{equation} \label{E:hpp}
  \fh^\perp_\varphi \ = \ \fm_\bR \,\cap \, \left(\fm_{\pm1} \op \cdots \op \fm_{\pm k}\right) \, .
\end{equation}

The tangent space to $D$ at $o$ is naturally identified with $\fm_\bR/\fh_\varphi$ as an $H_\varphi$--module; and by \eqref{E:hphiperp}, $\fm_\bR/\fh_\varphi \simeq \fh_\varphi^\perp$, again as $H_\varphi$--modules.  In particular, the (real) \emph{tangent bundle} is naturally identified with a homogeneous vector bundle
$$
  TD \ \dfn \ G_\bR \times_{H_\varphi} (\fm_\bR/\fh_\varphi) 
  \ \simeq \ G_\bR \times_{H_\varphi} \fh_\varphi^\perp \, .
$$  
The real form $\fm_\bR$ defines complex conjugation on $\fm_\bC = \fm_\bR \ot_\bR \bC$.  By \eqref{E:mellconj} and \eqref{E:hpp}, there exists a $\fh_\varphi$--module decomposition
$$
  \fh_\varphi^\perp \ = \ \fh^\perp_1 \op \cdots \op \fh^\perp_k \,,
  \quad\hbox{where}\quad
  \fh^\perp_\ell  \ \dfn \ (\fm_\ell \op \fm_{-\ell}) \,\cap\, \fg_\bR \, .
$$
By \eqref{E:mellconj},
\begin{equation} \label{E:hppC}
  \fh^\perp_\ell\ot_\bR \bC \ = \ \fg_{\ell} \,\op\,\fg_{-\ell} \, .
\end{equation}
Define a complex structure $\ttJ_\varphi$ on $\fh^\perp_\varphi$ by specifying 
$$
  \left.\ttJ_{\varphi}\right|_{\fh^\perp_\ell} \ \dfn \ 
  -\tfrac{\bi}{\ell}\,\tad\, \ttT_\varphi : \fh^\perp_\ell \to \fh^\perp_\ell \, .
$$
It is a consequence of $\bi \ttT_\varphi \in \ft$, that $\ttJ_\varphi$ is real.    Moreover, the fact that $\left.\tad\,\ttT_\varphi\right|_{\fm_\ell} = \ell \one$, yields both $\ttJ_\varphi^2 = -\one$, and 
$$
  \left.\ttJ_\varphi\right|_{\fm_\ell} \ = \ -\bi \one
  \quad\hbox{and}\quad
  \left.\ttJ_\varphi\right|_{\fm_{-\ell}} \ = \ \bi \one \,.
$$
Finally, since $\ttT_\varphi$ acts trivially on $\fm_0 = \fh_\varphi \ot_\bR \bC$ (indeed, $\fm_0$ is the centralizer of $\ttT_\varphi$ in $\fm_\bC$), we see that $\ttJ_\varphi$ is $H_\varphi$--invariant, and therefore defines a homogeneous complex structure, also denoted $\ttJ_\varphi$, on $TD$.  Moreover, the \emph{holomorphic tangent bundle} is naturally identified with the homogeneous vector bundle
$$
  \cT D \ \dfn \ G_\bR \times_{H_\varphi} \fm_- \, , 
$$
where 
\begin{equation}\label{E:m-}
  \fm_- \ \dfn \ \fm_{-1} \,\op\cdots\op\, \fg_{-\sfk} 
  \ = \ \bigoplus_{\a(\ttT_\varphi)>0} \fm_{-\a} \, .
\end{equation}

\subsection{The infinitesimal period relation} \label{S:IPR}

Since $\fh^\perp_1$ is an $H_\varphi$--module, it follows that $\fh^\perp_1/\fh_\varphi \subset \fm_\bR/\fh_\varphi$ is $H_\varphi$--invariant, and so defines a homogeneous subbundle.
 
\begin{definition} \label{D:IPR}
The \emph{infinitesimal period relation} (IPR) is the homogeneous subbundle 
\begin{equation} \label{E:T_1}
  T_1 \ \dfn \ G_\bR \times_{H_\varphi} (\fh^\perp_1/\fh_\varphi) \ \subset \ T D \,.
\end{equation}
A connected submanifold $Z \subset D$ is an \emph{variation of Hodge structure} (VHS), or \emph{integral manifold of the IPR}, if $T Z \subset \left.T_1{}\right|_Z$.  An irreducible variety $X \subset D$ is an \emph{algebraic VHS}, or \emph{integral variety of the IPR}, if the smooth locus $X^\circ$ is a VHS.  

Let $\sA = \sA^\sbullet$ denote the graded ring of smooth, real--valued, differential forms on $D$.  Let $\sI^\sbullet = \sI \subset \sA$ be the graded differential ideal generated by smooth sections (and their exterior derivatives) of the subbundle $\tAnn(T_1) \subset T^* D$.  Let $T_x D$ denote the tangent space at $x \in D$.  An \emph{integral element}, or \emph{infinitesimal variation of Hodge structure} (IVHS), is any subspace $E \subset T_x D$ such that $\left.\sI\right|_{E} = 0$.  The infinitesimal variations of Hodge structure $E \subset T_oD$ may be identified with subspaces $E \subset \fh^\perp_\varphi$ such that $[E,E] \subset \fh_\varphi$.  Note that, a connected submanifold $Z \subset D$ is a VHS if and only if $\left.\sI\right|_{Z} \equiv 0$; that is, each $T_z Z$ is an IVHS.  

In a mild abuse of nomenclature, we will sometimes refer to $\sI$ as the IPR.\end{definition}

Any element $g \in G_\bR$ may be viewed as a biholomorphism of $D$.  Let $g_* : T_x D \to T_{g\cdot x} D$ denote the associated push-forward map.  The following is an immediate consequence of the homogeneity of $T_1$.

\begin{lemma} \label{L:hom}
If $g \in G_\bR$, then $g^*\sI = \sI$.  In particular:
\begin{a_list}
\item
If $x = g^{-1}\cdot o$, then $E \subset T_x D$ is an IVHS if and only if $g_*E \subset T_o D$ is an IVHS.
\item
A submanifold $Z \subset D$ is a VHS if and only if $g\cdot Z$ is a VHS.
\end{a_list}
\end{lemma}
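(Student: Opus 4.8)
The plan is to reduce the entire statement to the single observation that the subbundle $T_1$ is invariant under the $G_\bR$-action on $D$, and then to propagate this invariance through the construction of $\sI$ and down to the two corollaries; the argument is almost entirely formal once the first step is in place.

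First I would make precise the sense in which $g \in G_\bR$ acts. Writing $L_g : D \to D$ for the biholomorphism $g' H_\varphi \mapsto gg' H_\varphi$, the push-forward is $g_* = (L_g)_* : T_x D \to T_{g\cdot x}D$. The essential structural fact is that the identification $TD \simeq G_\bR \times_{H_\varphi}(\fm_\bR/\fh_\varphi)$ of Section \ref{S:ts} is $G_\bR$-equivariant: under it, $g_*$ corresponds to the associated-bundle action $[g', v] \mapsto [gg', v]$. Consequently every associated subbundle $G_\bR \times_{H_\varphi} W'$ arising from an $H_\varphi$-submodule $W' \subset \fm_\bR/\fh_\varphi$ is carried to itself. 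Applying this to $W' = \fh^\perp_1/\fh_\varphi$ shows $g_*(T_1|_x) = T_1|_{g\cdot x}$; that is, $T_1$ is $G_\bR$-invariant.

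Next I would pass to the annihilator and the ideal. Invariance of $T_1$ yields invariance of $\tAnn(T_1) \subset T^* D$ under pullback: for $\alpha \in \tAnn(T_1)_{g\cdot x}$ and $w \in (T_1)_x$ we have $(g^*\alpha)(w) = \alpha(g_* w) = 0$, so $g^*$ carries sections of $\tAnn(T_1)$ to sections of $\tAnn(T_1)$. Since pullback is a homomorphism of the exterior algebra $\sA^\sbullet$ and commutes with $\td$, it carries the generators of $\sI$ (smooth sections of $\tAnn(T_1)$ together with their exterior derivatives) to generators of $\sI$; hence $g^*\sI \subseteq \sI$, and applying $g^{-1}$ gives $g^*\sI = \sI$.

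Finally the two corollaries follow from $g^*\sI = \sI$ together with the identity $(g^*\alpha)(w_1,\dots,w_p) = \alpha(g_* w_1, \dots, g_* w_p)$. For (a), with $x = g^{-1}\cdot o$ so that $g_* E \subset T_o D$, a form $\alpha$ vanishes on $g_* E$ exactly when $g^*\alpha$ vanishes on $E$; as $\alpha$ ranges over $\sI$ so does $g^*\alpha$, whence $\sI|_E = 0$ if and only if $\sI|_{g_* E} = 0$, i.e. $E$ is an IVHS if and only if $g_* E$ is. For (b), apply (a) pointwise along $Z$: since $T_{g\cdot z}(g\cdot Z) = g_*(T_z Z)$, each $T_z Z$ is an IVHS if and only if $T_{g\cdot z}(g\cdot Z)$ is, so $Z$ is a VHS if and only if $g\cdot Z$ is. The only point demanding any real care—and the nearest thing to an obstacle—is the $G_\bR$-equivariance of the tangent-bundle identification used in the first step; once that is granted everything else is bookkeeping with pullbacks.
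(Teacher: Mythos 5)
Your proposal is correct and follows exactly the route the paper intends: the paper states the lemma as "an immediate consequence of the homogeneity of $T_1$," and your argument simply unwinds that homogeneity through the annihilator, the generated ideal, and the pullback identities. No discrepancies.
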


The IPR extends to the compact dual as follows.  The holomorphic tangent bundle of the compact dual is the homogeneous bundle
$$
  \cT \check D \ = \ G_\bC \times_{P_\varphi} (\fm_\bC/\fp_\varphi) \, .
$$
As noted in Section \ref{S:GE}, $\fm^{-1} = \fm_{\ge-1}$ is a $\fp_\varphi$--module.  Therefore, $\fm^{-1}/\fp_\varphi$ is an $\fp_\varphi$--module, and we may define a homogeneous subbundle
$$
  \cT_1 \ = \ G_\bC \times_{P_\varphi} (\fm^{-1}/\fp_\varphi)
  \ \subset \ \cT \check D \, .
$$
By \eqref{E:hppC}, $\fh_\varphi^\perp\ot_\bR\bC = \fg_{\not=0}$; and this yields $(\cT_1 \op \overline \cT_1)_{|D} = T_1 \ot \bC$.  

Let $\check\sA_\bC = \check\sA\ot_\bR \bC$ be the graded ring of smooth, complex--valued, differential forms on $\check D$, and let $\cI \subset \check\sA_\bC$ be the graded, differential ideal generated by smooth sections of $\tAnn(\cT_1) \subset \cT ^*\check D$.  Then 
$$
  \left(\cI \op \overline \cI\right)_{|D} \ = \ \sI \ot \bC \,.
$$
The subalgebras $\fe\subset\fm_{-1}\simeq\fm^{-1}/\fp_\varphi$ are precisely the integral elements of $\cI$ in $\cT_o\check D$.  Note that $[\fe,\fe] \subset [\fg_{-1},\fg_{-1}] \subset \fg_{-2}$ and $[\fe,\fe] \subset \fe \subset \fg_{-1}$ force $\fe$ to be abelian.  We have the following well-known lemma.  

\begin{lemma} \label{L:abel}
There is a bijective correspondence between infinitesimal variations of Hodge structure $E \subset T_o D$ and subspaces $\fe\subset\fm_{-1}$, such that $[\fe,\fe]=0$, that identifies $E\ot_\bR\bC$ with $\fe\op\overline\fe$.
\end{lemma}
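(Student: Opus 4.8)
The bridge between the real and holomorphic pictures is the identity $(\cI\op\overline\cI)_{|D} = \sI\ot\bC$ recorded above, together with the linear-algebra facts that $\fh^\perp_1\ot_\bR\bC = \fm_1\op\fm_{-1}$ (by \eqref{E:hppC}), that complex conjugation interchanges $\overline{\fm_{-1}} = \fm_1$ (by \eqref{E:mellconj}), and that $\fm_0 = \fh_\varphi\ot_\bR\bC$ (by \eqref{E:hpC}). I would package the two directions of the correspondence as follows. For the map $\fe\mapsto E$, start from a subspace $\fe\subset\fm_{-1}$ with $[\fe,\fe]=0$ and form $W_\fe := \fe\op\overline\fe\subset\fm_{-1}\op\fm_1 = \fh^\perp_1\ot_\bR\bC$. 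Because $\overline{\fm_{-1}} = \fm_1$, the subspace $W_\fe$ is stable under complex conjugation, hence is the complexification of the real subspace $E := W_\fe\cap\fm_\bR\subset\fh^\perp_1$, with $E\ot_\bR\bC = \fe\op\overline\fe$. For the map $E\mapsto\fe$, I set $\fe := (E\ot_\bR\bC)\cap\fm_{-1}$.

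The one point that is not purely formal is that $E\ot_\bR\bC$ actually splits along the grading as $\fe\op\overline\fe$ — equivalently, that $E$ is a complex ($\ttJ_\varphi$-invariant) subspace of $\fh^\perp_1$. This is exactly where the holomorphic nature of a VHS enters: a VHS $Z\subset D$ is a complex submanifold of the complex manifold $D$, so each tangent space $E = T_zZ$ is $\ttJ_\varphi$-invariant. Recalling that $\fm_{-1}$ and $\fm_1$ are the $+\bi$- and $-\bi$-eigenspaces of $\ttJ_\varphi$ on $\fh^\perp_1$, the $\ttJ_\varphi$-invariance of $E$ forces $E\ot_\bR\bC$ to be the direct sum of its intersections with $\fm_{-1}$ and $\fm_1$; reality then gives $(E\ot_\bR\bC)\cap\fm_1 = \overline{(E\ot_\bR\bC)\cap\fm_{-1}} = \overline\fe$, so that $E\ot_\bR\bC = \fe\op\overline\fe$ with $\fe\subset\fm_{-1}$. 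I would flag this gradedness as the main thing to get right: without $\ttJ_\varphi$-invariance a real subspace of $\fh^\perp_1$ need not respect the grading, so the correspondence is genuinely a statement about the (holomorphic) tangent spaces of VHS rather than about arbitrary real subspaces of $\fh^\perp_1$.

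It then remains to match the two integrability conditions. Writing $E\ot_\bR\bC = \fe\op\overline\fe$ and using $[\fm_i,\fm_j]\subset\fm_{i+j}$, the bracket decomposes along the grading as
\begin{equation*}
  [E\ot_\bR\bC,\,E\ot_\bR\bC] \ = \
  \underbrace{[\fe,\fe]}_{\subset\,\fm_{-2}} \ + \
  \underbrace{[\fe,\overline\fe]}_{\subset\,\fm_{0}} \ + \
  \underbrace{[\overline\fe,\overline\fe]}_{\subset\,\fm_{2}}\,,
\end{equation*}
where I have used $[\overline\fe,\fe] = [\fe,\overline\fe]$ as spans. Since $\fm_0 = \fh_\varphi\ot_\bR\bC$, the condition $[E,E]\subset\fh_\varphi$ complexifies to $[E\ot_\bR\bC,E\ot_\bR\bC]\subset\fm_0$, which by the display holds if and only if the $\fm_{\pm2}$-components vanish, i.e. $[\fe,\fe] = 0$ and $[\overline\fe,\overline\fe] = 0$. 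As $[\overline\fe,\overline\fe] = \overline{[\fe,\fe]}$, these two vanishings are equivalent, so $E$ is an IVHS if and only if $\fe$ is abelian. Finally, the two assignments $\fe\mapsto E$ and $E\mapsto\fe$ are visibly mutually inverse: each recovers $\fe$ as the $\fm_{-1}$-graded piece of $E\ot_\bR\bC$, and each recovers $E$ as the real points of $\fe\op\overline\fe$. This yields the claimed bijection and, via $E\ot_\bR\bC = \fe\op\overline\fe$, identifies the IVHS with the abelian $\fe\subset\fm_{-1}$, i.e. with the integral elements of $\cI$ described just before the lemma.
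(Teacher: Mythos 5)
Your argument is correct, and it is worth saying up front that the paper offers no proof of this lemma at all: it is labelled ``well-known,'' and the only supporting text is the preceding observation that the integral elements of the holomorphic ideal $\cI$ at $o$ are subalgebras $\fe\subset\fm_{-1}$, which are automatically abelian because $[\fe,\fe]\subset\fm_{-2}\cap\fm_{-1}=0$. Your write-up supplies exactly the missing bridge between that holomorphic statement and the real one, via $\fh^\perp_1\ot_\bR\bC=\fm_{-1}\op\fm_1$, conjugation-stability of $\fe\op\overline\fe$, and the graded decomposition of the bracket; the equivalence $[E,E]\subset\fh_\varphi\Leftrightarrow[\fe,\fe]=0$ is handled correctly, and the two assignments are visibly inverse. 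This is the natural route and is consistent with the paper's sketch.

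The one point deserving comment is the $\ttJ_\varphi$-invariance of $E$, which you rightly flag as the non-formal step. You justify it by saying that a VHS is a complex submanifold, so its tangent spaces are $\ttJ_\varphi$-invariant; but the lemma is about integral \emph{elements} of $\sI$, which per Definition \ref{D:IPR} are arbitrary real subspaces $E\subset T_xD$ with $\sI|_E=0$ and need not be tangent to any actual VHS. Read with that literal definition, the statement fails: a real line $\bR v\subset\fh^\perp_1$ is an integral element (it is abelian) but is never $\ttJ_\varphi$-invariant, hence is not of the form $(\fe\op\overline\fe)\cap\fm_\bR$. So the correct resolution is the one you in fact adopt in your closing remark — to take ``IVHS'' in this lemma to mean $\ttJ_\varphi$-invariant (equivalently, complex) integral elements, which is exactly how the paper uses the term later (Section \ref{S:intelem} places the IVHS inside $\tGr(\ell,\cT D)$, the Grassmannian of \emph{holomorphic} tangent subspaces). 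With that convention made explicit, the $\ttJ_\varphi$-invariance is a hypothesis rather than something to be derived, and your proof is complete; just be aware that the appeal to ``tangent spaces of VHS'' is not the logically operative step.
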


\noindent In a mild abuse of terminology I will also refer to the abelian subalgebras $\fe \subset \fg_{-1}$ as IVHS.  

\subsection{Reduction to \boldmath $\ttT_\varphi = \ttT_I$ \unboldmath } \label{S:TI}

Given $\ttT_\varphi$ as in \eqref{E:Tphi}, set
$$
  I \ \dfn \ \{ i \ | \ n_i = 1 \} \quad\hbox{and}\quad
  J \ \dfn \ \{ j \ | \ n_j > 1 \}\,.
$$
Together, $\Delta^+ \subset \{ n^i\s_i \ | \ 0 \le n^i \in \bZ \}$\footnote{This is a standard result in representation theory, cf. \cite{MR1920389}.} and \eqref{E:grm} yield
\begin{equation} \label{E:preDm1}
  \Delta(\fm_1) \ = \ 
  \bigcup_{i\in I} \,\big\{ \a\in\Delta \ | \ \a(\ttT_i) = 1 \hbox{ and } \a(\ttT_j) = 0
  \ \forall \ j \in (I\backslash\{i\}) \cup J \, \big\} \,.
\end{equation}
If $I = \emptyset$, then \eqref{E:preDm1} implies $\fm_{\pm1} = 0$.  In this case, all variations of Hodge structure are trivial; they consist of a single point.  So I will 
\begin{center}
\emph{assume from this point forward that $I \not= \emptyset$.}
\end{center}

The proposition below asserts that, for the purpose of studying the IPR, there is no loss of generality in assuming $J=\emptyset$.  Equivalently, $\ttT_\varphi = \ttT_I$.

\begin{proposition} \label{P:red}
There exists a $\bQ$--algebraic semisimple subgroup $F \subset G$ with the following properties.
\begin{a_list}
\item The $\bQ$--subalgebra $\ff \subset \fg$ is a sub-Hodge structure.
\item Every integral manifold $Z \subset D$ of the infinitesimal period relation $T_1$ on $D$ that passes through $o \in D$ is contained in the orbit 
$$C \ \dfn \ F_\bR \cdot o \,.$$  
\item If $S_1 \subset T C$ is the IPR \eqref{E:T_1} on $C$, then $S_1 = \left.T_1\right|_C$; that is, the restriction of the IPR on $D$ to $C$ agrees with the IPR on $C$.
\item The distribution $S_1 \subset T C$ is bracket generating: if we inductively define $S_{a+1} = S_a + [S_1 , S_a]$, then $T C = S_b$ for some $1 \le b\in\bZ$.
\item When restricted to $\ff_\bC$, the grading element $\ttT_J$ vanishes and $\ttT_\varphi = \ttT_I$.
\end{a_list}
\end{proposition}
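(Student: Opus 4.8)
The plan is to take $\ff$ to be the subalgebra generated by the degree $\pm1$ pieces of the grading and to show it is the $\bQ$--subalgebra we seek. Set $\ttT_J \dfn \sum_{j\in J}\ttT_j$ and define $\ff_\bC \dfn \langle \fm_1 , \fm_{-1}\rangle$, the complex Lie subalgebra of $\fg_\bC$ generated by $\fm_{\pm1}$. First I would record the two consequences of \eqref{E:preDm1}: every $\a\in\Delta(\fm_{\pm1})$ satisfies $\a(\ttT_J)=0$ and $\a(\ttT_\varphi)=\a(\ttT_I)=\pm1$. Since $\Delta_J \dfn \{\a\in\Delta \mid \a(\ttT_J)=0\}$ is closed under addition, $\ff_\bC$ is spanned by root spaces $\fm_\a$ with $\a\in\Delta_J$ together with a piece of $\ft_\bC$; consequently $\tad(\ttT_J)$ and $\tad(\ttT_\varphi)-\tad(\ttT_I)$ both annihilate $\ff_\bC$, which is precisely (e).

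Next I would identify $\ff_\bC$ with a product of simple ideals of a Levi. Let $\fl_J^{\mathrm{ss}} = [\fl_J,\fl_J]$ be the semisimple part of the Levi $\fl_J = \fz_{\fg_\bC}(\ttT_J)$, whose simple roots are $\{\sigma_k \mid k\notin J\}$; by the previous paragraph $\fm_{\pm1}\subset\fl_J^{\mathrm{ss}}$. Grading $\fl_J^{\mathrm{ss}}$ by $\ttT_I$ (which agrees with $\ttT_\varphi$ there), the degree $\pm1$ pieces are exactly $\fm_{\pm1}$, and Remark \ref{R:g1} gives that $\fm_1$ (resp.\ $\fm_{-1}$) generates the positive (resp.\ negative) graded part of each simple factor meeting $I$. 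A short Jacobi--identity check shows $\tad(\fm_{\pm\sigma_k})$ preserves $\ff_\bC$ for every $k\notin J$ (it preserves the generators $\fm_{\pm1}$ and hence the generated subalgebra), so $\ff_\bC$ is an ideal of $\fl_J^{\mathrm{ss}}$; being an ideal of a semisimple algebra, it is the product of those simple factors of $\fl_J^{\mathrm{ss}}$ that contain a node of $I$, and in particular $\ff_\bC$ is semisimple.

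The key step is rationality, i.e.\ that $\ff_\bC$ is the complexification of a $\bQ$--subalgebra $\ff\subset\fg$ which is a sub-Hodge structure (part (a)). Here I would use that the circle $\varphi$ is compact: the $\bQ$--torus $\sfS_\varphi \dfn \overline{\tAd(\varphi(S^1))}$ (Zariski closure in $\tAut(\fg)$) is $\bR$--anisotropic of rank one, so the Galois action on its characters is by inversion and its $\bC$--characters occur in complex-conjugate pairs. By \eqref{E:mellconj} the conjugate of $\fm_\ell$ is $\fm_{-\ell}$; hence each $\fm_\ell\op\fm_{-\ell}$ (and $\fm_0$) is a single $\bQ$--isotypic component of $\sfS_\varphi$, and so is a $\bQ$--rational sub-Hodge structure of $\fg$. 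In particular $\fm_1\op\fm_{-1}$ is defined over $\bQ$; since the bracket is $\bQ$--bilinear, the subalgebra $\ff_\bC$ it generates is defined over $\bQ$, and being $\ttT_\varphi$--graded it is a sub-Hodge structure. Let $F\subset G$ be the connected $\bQ$--subgroup with Lie algebra $\ff$; it is semisimple by the previous paragraph. I expect this descent through the anisotropic torus to be the main obstacle—everything else is grading bookkeeping, whereas this is the one genuinely Hodge-theoretic input.

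Finally I would dispatch (b)--(d). For (b), note $\fh^\perp_1\op\fh_\varphi$ generates $\ff_\bR\op\fh_\varphi$ as a Lie algebra, since $(\fm_1\op\fm_{-1})\cap\fg_\bR=\fh^\perp_1$ and $\fh_\varphi=\fm_0\cap\fg_\bR$ normalizes $\ff_\bR$; thus $F_\bR H_\varphi$ is a subgroup, and by the Frobenius/Nagano orbit theorem every connected integral manifold of the lifted left-invariant distribution through $e$ lies in $F_\bR H_\varphi$, so a connected VHS through $o$ projects into $F_\bR H_\varphi\cdot o = F_\bR\cdot o = C$. For (c), $C\simeq F_\bR/(F_\bR\cap H_\varphi)$ is the Hodge domain of $(F,\tAd,\varphi)$ with grading $\ttT_I$, whose degree-one piece is $\fm_1$; hence $S_1$ and $T_1|_C$ both restrict at $o$ to $\fh^\perp_1$, and $F_\bR$--homogeneity propagates the equality. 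For (d), $S_1$ bracket-generates $TC$ precisely because $\fm_{\pm1}$ generate $\ff_\bC$ by construction. This completes the plan.
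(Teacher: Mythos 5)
Your construction of $\ff_\bC$ as the subalgebra generated by $\fm_{\pm1}$, and the derivation of (b)--(e) from it, is a workable variant of the paper's argument: the paper instead takes $\ff_\bC$ to be the semisimple subalgebra generated by all simple roots $\s_k$ with $k\notin J$, and your $\ff_\bC$ is the ideal of that algebra given by the simple factors meeting $I$; either choice satisfies the conclusions. One small slip along the way: for $k\in I$ the operator $\tad(\fm_{\pm\s_k})$ does \emph{not} preserve the generators $\fm_{\pm1}$ (it maps $\fm_1$ into $\fm_2$); stability of $\ff_\bC$ under these operators holds instead because $\fm_{\pm\s_k}\subset\fm_{\pm1}\subset\ff_\bC$ when $k\in I$.

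The genuine gap is the rationality step, which you correctly single out as the crux. If the closure $\overline{\tAd(\varphi(S^1))}$ is taken over $\bQ$, it is the Mumford--Tate group of the adjoint Hodge structure, which is in general non-abelian (generically all of $G_\tad$) rather than a rank-one anisotropic torus, so there is no character decomposition to descend through. If instead you take the $\bR$-Zariski closure, you do get a rank-one anisotropic real torus, but the only Galois group then acting is $\mathrm{Gal}(\bC/\bR)$, and you recover only the $\bR$-rationality of $\fm_\ell\op\fm_{-\ell}$, i.e.\ \eqref{E:mellconj}, which is not enough. In fact the claim that each $\fm_\ell\op\fm_{-\ell}$ is defined over $\bQ$ fails for a general $\bQ$-form: already $\fm_0$ is the complexified centralizer of $\varphi(S^1)$, and a compact Cartan-containing subalgebra of $\fg_\bR$ need not intersect $\fg$ in a $\bQ$-form of itself. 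What makes the step legitimate in the paper is that the $\bQ$-structure (indeed $\bZ$-structure) in force is the explicit one built from a Chevalley basis adapted to $\ft_\bC$ in Proposition \ref{P:HSgr}; with respect to that form, any subspace spanned by the root vectors over a negation-stable set of roots, together with the corresponding coroots, is automatically defined over $\bQ$, and this is exactly how the paper obtains $\ff=\ff_\bC\cap\fg$ with $\ff\ot_\bQ\bC=\ff_\bC$. Replacing your descent-through-a-torus paragraph with an appeal to that construction repairs the argument, and the rest of your plan goes through.
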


\begin{remark*}
Another result, similar in spirit to Proposition \ref{P:red}, is \cite[Theorem 4.6]{MR1624194}.
\end{remark*}

\begin{remark*} 
It follows from Lemma \ref{L:hom}(b) and Proposition \ref{P:red}(b) that, modulo the action of $G_\bR$, every VHS $Z \subset D$ is contained in $C$.
\end{remark*}

\begin{proof}
Let $\ff_\bC \subset \fm_\bC$ be the semisimple subalgebra generated by the simple roots $\{\s_i \ | \ i \not \in J \}$, and let $\ff$ be the $\bQ$--algebraic semisimple subalgebra $\ff_\bC \cap \fm$.  It is clear from the construction of Proposition \ref{P:HSgr} that $\ff \ot_\bQ \bC = \ff_\bC$.  Proposition \ref{P:red}(e) is immediate.

Let $\ff_\bC = \op \ff_s$ be the $\ttT_I$--graded decomposition \eqref{E:grm} of $\ff_\bC$.  Then Proposition \ref{P:red}(e) implies
\begin{equation} \label{E:fing}
  \ff_s \ \subset \ \fm_s \,,
\end{equation}
yielding Proposition \ref{P:red}(a).  I claim that $\fg_{\pm1} \subset \ff_\bC$, whence
\begin{equation} \label{E:fing1}
  \fg_{\pm1} \ = \ \ff_{\pm1} \,.
\end{equation}  
To see this, it suffices to observe that
\begin{equation} \label{E:fing0}
  \fg_{\s_i} \ \subset \ \left\{
  \begin{array}{ll}
    \fg_1 \ & \hbox{ if } i \in I \,,\\
    \fg_{>1} \ & \hbox{ if } i \in J \,,\\
    \fg_0 \ & \hbox{ if } i \not\in I \cup J \, .
  \end{array} \right.
\end{equation}
(Indeed, $\ff_\bC \cap \fm_-$ is the smallest subalgebra of $\fm_\bC$ that contains $\fm_{\pm1}$.)  It also follows from \S\ref{S:gr} and \eqref{E:fing0} that 
$$
  \fg_0 \ = \ \ff_0 \,\op\, \tspan_\bC\{\ttT_j \ | \ j \in J \} \, .
$$
Since $[\ft_\bC , \ff_s] \subset \ff_s$, we have $[\fg_0 , \ff_s] \subset \ff_s$.  Thus $\ff_\bR$ is a $H_\varphi$--module.  Therefore, we may define a homogeneous subbundle
$$
  S \ \dfn \ G_\bR \times_\Hp (\ff_\bR/\fh_\varphi)
$$
of the tangent bundle $T D$.  Since $\ff$ is a subalgebra of $\fm$, it follows that $S$ is an involutive distribution on $D$.  (In fact, $S$ is the smallest involutive distribution containing $T_1$.)  Therefore, $D$ is foliated by maximal integral manifolds of $S$.  Let $F_\bR \subset G_\bR$ be the closed Lie group with Lie algebra $\ff_\bR$.  It is immediate from the definition of $S$ that the homogeneous submanifold $C = F_\bR \cdot o$ is the maximal integral manifold through $o \in D$.  Indeed, if $L \subset F_\bR$ is the closed, connected Lie subgroup with Lie algebra $\fl = \fh_\varphi \cap \ff_\bR$, then the tangent bundle of $C$ is $TC = F_\bR \times_L (\ff_\bR/\fl) = \left. S\right|_C$.  Moreover, \eqref{E:T_1} and \eqref{E:fing1} imply
$$
  T_1 \ \subset \ S \,.
$$
So any integral of $T_1$ (equivalently, any VHS on $D$) is necessarily an integral manifold of $S$.  Proposition \ref{P:red}(b) now follows.

Set $\fl^\perp_1 = (\ff_1\op\ff_{-1}) \cap \ff_\bR$.  Then the IPR on $C$ is $S_1 = F_\bR \times_L (\fl^\perp_1/\fl)$.  By \eqref{E:fing1}, $\fl^\perp_1 = \fh^\perp_1$.  Proposition \ref{P:red}(c) follows.

Finally to establish Proposition \ref{P:red}(d), it suffices to show that $\ff_1$ generates $\ff_+$.  (Equivalently, $\ff_{-1}$ generates $\ff_-$.)  This is Theorem 3.2.1(1) (and Definition 3.1.2) of \cite{MR2532439}.
\end{proof}

\begin{equation} \label{E:TI}
  \begin{array}{l}
  \hbox{\emph{Assume for the remainder of the notes that $G = F$.} } \\
  \hbox{\emph{Equivalently, } } \textstyle
  \ttT_\varphi \ = \ \ttT_I \ = \ \sum_{i\in I} \ttT_i \,.
  \end{array}
\end{equation}
In this case, \eqref{E:preDm1} yields
\begin{equation} \label{E:Dm1}
\renewcommand{\arraystretch}{1.3}\begin{array}{rcl}
  \Delta(\fm_1) & = & \{ \a\in\Delta \ | \ \a(\ttT_\varphi) = 1 \} \\
  & = & \bigcup_{i\in I}\,\big\{ \a\in\Delta \ | \ \a(\ttT_i) = 1 \hbox{ and } 
  \a(\ttT_j) = 0 \ \forall \ j \in I\backslash\{i\} \,\big\} \,.
\end{array}
\end{equation}

\subsection{Schubert varieties}

Schubert varieties are distinguished by the property that their homology classes form an additive basis of the integer homology $H_\sbullet(\check D, \bZ)$, cf. \cite{MR0077878, MR0142697}.  This section is a brief review of Schubert varieties.  There are many excellent references; see, for example, \cite{MR0429933}.

Let $B \subset G_\bC$ denote the Borel subgroup with Lie algebra $\fb \subset \fm_\bC$.  Let $W$ denote the Weyl group of $\Delta(\fm_\bC,\ft_\bC)$.  Given $w \in W$ the $B$--orbit 
$$
  C_w \ \dfn \ B w^{-1} \cdot o \ \subset \ \check D
$$
is a \emph{Schubert cell}.  The compact dual is a disjoint union
$$
  \check D \ = \ \bigsqcup_{w\in W^\varphi} C_w
$$
over the set 
$$
  W^\varphi \ \dfn \ 
  \{ w \in W \, | \, w(\lambda) \hbox{ is $\fm_0$--dominant } \forall 
           \hbox{ $\fm_\bC$--dominant weights } \lambda \} \,.
$$
If $w \in W^\varphi$, then the dimension of $C_w$ is the length $|w|$ of $w$ (cf. Remark \ref{R:Dw}(a)).  The Zariski closure
$$
  X_w \ \dfn \ \overline{C}_w
$$
is a \emph{Schubert variety}.  

\begin{remark} \label{R:Wp}
The set $W^\varphi$ admits several characterizations; here are two.  (a)  Let 
$$
  \Delta(w) \ \dfn \ \Delta^+ \cap w \Delta^- \,.
$$
Then 
$$
  W^\varphi \ = \ \{ w \in W \, | \, \Delta^+(\fm_0) \subset w ( \Delta^+) \} 
  \ = \ \{ w \in W \, | \, \Delta(w) \subset \Delta(\fm_+) \} \, .
$$
See \cite[p. 360, Remark 5.13]{MR0142696}.\footnote{Beware: there are two Remarks 5.13 in \cite{MR0142696}, the second is on p. 361--362.}

(b)  The Weyl group of the semisimple component $\mss$ of $\fm_0$ is naturally identified with the subgroup $W_0 \subset W$ generated by the simple reflections $\{ r_j \ | \ \s_j \in \Delta(\fm_0) \}$.  Each coset $W_0\backslash W$ admits a unique representative of minimal length, and $W^\varphi$ is the set of these minimal representatives \cite[Proposition 5.13]{MR0142696}.
\end{remark}

\begin{remark} \label{R:Dw}
The set $\Delta(w)$ has several interesting properties; three of which will be useful to us.  (a)  Each simple root $\s_i \in \Sigma$ determines a reflection $r_i \in W$, and these simple reflections generate the Weyl group.  In particular, any element $w\in W$ may be expressed as $w = r_{i_1}\cdots r_{i_\ell}$.  When $\ell$ is minimal, we call this expression \emph{reduced}, and $|w| = \ell$ is the \emph{length} of $w$.  The number of elements in $\Delta(w)$ is equal to the length of $w$, \cite[Proposition 3.2.14(3)]{MR2532439}.  That is,
\begin{equation} \label{E:|Dw|}
  |\Delta(w)| = |w| \,. 
\end{equation}

(b)  Let 
$$
  \varrho \ \dfn \ \half \sum_{\a\in\Delta^+} \a \ = \ \sum_i \w_i \, .
$$
If $w \in W^\varphi$, then
\begin{equation} \label{E:rhow}
  \varrho_w \ \dfn \ \varrho \,-\, w(\varrho)
   \ = \ \sum_{\a\in\Delta(w)} \a \, .
\end{equation}
Cf. \cite[(5.10.1)]{MR0142696}.  

(c) A set $\Phi \subset \Delta$ is \emph{closed} if given any two $\b,\c \in \Phi$ such that $\b+\c \in \Delta$, it is the case that $\b+\c \in \Phi$.  The mapping $w \mapsto \Delta(w)$ is a bijection of $W^\varphi$ onto the family of all subsets $\Phi \subset \Delta(\fg_+)$ with the property that both $\Phi$ and $\Delta^+\backslash\Phi$ are closed \cite[Proposition 5.10]{MR0142696}.
\end{remark}

Remark \ref{R:Dw}(c) implies that 
\begin{equation} \label{E:nw}
  \fn_w \ = \ \bigoplus_{\a\in\Delta(w)} \fm_{-\a} \ \subset \ \fm_-
\end{equation}
is a nilpotent subalgebra of $\fm_\bC$.  Let $N_w \subset G_\bC$ be the associated unipotent Lie subgroup.  Then
\begin{equation} \nonumber 
  w C_w \ = \ N_w \cdot o \ \stackrel{(\ast)}{\simeq} \ \bC^{|w|}\,,
\end{equation}
where $(\ast)$ is due to Remark \ref{R:Dw}(a) and the fact that $N_w$ is nilpotent.  In particular, 
\begin{equation} \label{E:dimXw}
  \tdim\,X_w \ = \ \tdim\,C_w \ = \ |w| \, .
\end{equation}
Finally, note that $X_w = \overline C_w$ yields
\begin{equation} \label{E:wXw}
  w X_w \ = \ \overline{ N_w \cdot o } \, .
\end{equation}

\subsection{Schubert variations of Hodge structure}  \label{S:SVHS}

\begin{definition*}
A \emph{Schubert variation of Hodge structure}, or \emph{Schubert integral} of the IPR, is a Schubert variety $X_w$, $w \in W^\varphi$, that is an algebraic VHS.
\end{definition*}

By Remark \ref{R:Wp}(a), $\Delta(w) \subset \Delta(\fm_+)$.  From Remark \ref{R:Dw}(a) and \eqref{E:rhow} it follows that $\varrho_w(\ttT_\varphi) \ge |w|$.  Equations \eqref{E:Dm1} and \eqref{E:rhow} imply that 
\begin{equation}\label{E:rhowTp}
  \Delta(w) \ \subset \ \Delta(\fm_1)
  \quad\hbox{if and only if}\quad
  \varrho_w(\ttT_\varphi) \,=\, |w| \,.
\end{equation}
Set
\begin{equation} \label{E:Wint}
  W^\varphi_\sI \ \dfn \ \{ w \in W^\varphi \ | \ \Delta(w) \subset \Delta(\fm_1) \} 
  \ = \ 
  \{ w \in W^\varphi \ | \ \varrho_w(\ttT_\varphi) = |w| \} \, .
\end{equation}

\begin{theorem} \label{T:SchubInt}
Let $w \in W^\varphi$.  The Schubert variety $X_w$ is a variation of Hodge structure if and only if $w\in W^\varphi_\sI$.
\end{theorem}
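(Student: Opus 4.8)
The plan is to reduce the assertion to the computation of a single tangent space at the base point $o$ and then read off the answer from the known description of the integral elements of $\cI$. The first move is to pass from the Schubert variety $X_w$ to its open Schubert cell $C_w$: since $C_w \cong \bC^{|w|}$ is smooth and open dense in $X_w$, it is contained in, and dense in, the smooth locus $X_w^\circ$. The condition that $X_w^\circ$ be an integral variety of the IPR $\cI$ on $\check D$ — that is, $\cT_x X_w^\circ \subset (\cT_1)_x$ at every smooth point $x$, equivalently that the generators of $\tAnn(\cT_1)$ pull back to zero — is a closed condition on $X_w^\circ$. Hence $X_w$ is a VHS if and only if $C_w$ is an integral manifold of $\cI$.

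Next I would exploit homogeneity. Because $\cT_1 \subset \cT\check D$ is a $G_\bC$--homogeneous subbundle, $g^*\cI = \cI$ for all $g\in G_\bC$, the compact--dual analogue of Lemma \ref{L:hom}. Choosing a representative $w \in N(T) \subset G_\bC$ and recalling from \eqref{E:wXw} that $wC_w = N_w\cdot o$, where $N_w$ is the unipotent group with Lie algebra $\fn_w = \bigoplus_{\a\in\Delta(w)}\fm_{-\a}$ of \eqref{E:nw}, the cell $C_w$ is integral if and only if the orbit $N_w\cdot o$ is. The holomorphic tangent space of this orbit at $o$ is the image of $\fn_w$ in $\fm_\bC/\fp_\varphi$, which we identify with $\fn_w$ itself since $\fn_w\subset\fm_-$ meets $\fp_\varphi$ trivially. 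Translating any point of the orbit back to $o$ by an element of $N_w$ and using $N_w$--invariance of $\cI$, the orbit is an integral manifold if and only if its tangent space $\fn_w$ at $o$ is an integral element of $\cI$.

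It then remains to identify when $\fn_w$ is an integral element. By the discussion preceding Lemma \ref{L:abel}, the integral elements of $\cI$ in $\cT_o\check D$ are exactly the abelian subalgebras contained in $\fm_{-1}$. Now $\fn_w$ is a subalgebra (Remark \ref{R:Dw}(c)), so it is integral precisely when $\fn_w\subset\fm_{-1}$; in that case it is automatically abelian, since $[\fn_w,\fn_w]\subset[\fm_{-1},\fm_{-1}]\subset\fm_{-2}$ while also $[\fn_w,\fn_w]\subset\fn_w\subset\fm_{-1}$, forcing $[\fn_w,\fn_w]=0$. Finally $\fn_w\subset\fm_{-1}$ is equivalent to $\Delta(w)\subset\Delta(\fm_1)$, which by \eqref{E:Wint} is exactly the condition $w\in W^\varphi_\sI$. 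Stringing these equivalences together proves the theorem.

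The step I expect to require the most care is the first reduction: one must verify that the integral--variety condition is genuinely closed on the smooth locus and that $C_w$ is dense in $X_w^\circ$ (not merely in $X_w$), so that the vanishing of $\cI$ on the cell propagates to all of $X_w^\circ$. Once this is secured, the homogeneity argument and the final root--theoretic bookkeeping are routine.
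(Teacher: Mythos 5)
Your proposal is correct and follows essentially the same route as the paper: reduce to the cell $wC_w = N_w\cdot o$ via homogeneity of $\sI$, reduce further to the single tangent space $\fn_w$ at $o$, and then invoke Lemma \ref{L:abel} to see that $\fn_w$ is an IVHS precisely when $\Delta(w)\subset\Delta(\fm_1)$, i.e.\ $w\in W^\varphi_\sI$ (this last step is the paper's Lemma \ref{L:DwDm1}). The only difference is that you spell out the density/closedness argument for passing between $X_w$ and $C_w$, which the paper leaves implicit.
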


\begin{lemma} \label{L:DwDm1}
Let $w \in W^\varphi$.  The subspace $\fn_w = T_o (wC_w)$ is an IVHS if and only if $\Delta(w) \subset \Delta(\fm_1)$.
\end{lemma}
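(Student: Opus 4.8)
The plan is to reduce the statement to a direct computation about the nilpotent subalgebra $\fn_w$ using the characterization of IVHS provided by Lemma \ref{L:abel}, together with the bracket structure of the $\ttT_\varphi$--grading. Recall that $\fn_w = \bigoplus_{\a\in\Delta(w)} \fm_{-\a}$, and by Remark \ref{R:Wp}(a) we already know $\Delta(w) \subset \Delta(\fm_+)$, so $\fn_w \subset \fm_-$. By Lemma \ref{L:abel}, a subspace $\fe \subset \fm_{-1}$ is (the holomorphic part of) an IVHS precisely when $\fe$ is abelian. The key observation is that $\fn_w = T_o(wC_w)$ is an IVHS if and only if $\fn_w \subset \fm_{-1}$ and $\fn_w$ is abelian. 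So the proof naturally splits into showing that the condition $\Delta(w) \subset \Delta(\fm_1)$ is equivalent to these two requirements.

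First I would verify the easy direction. If $\Delta(w) \subset \Delta(\fm_1)$, then every root space $\fm_{-\a}$ with $\a\in\Delta(w)$ lies in $\fm_{-1}$, so $\fn_w \subset \fm_{-1}$. Moreover $[\fn_w,\fn_w] \subset [\fm_{-1},\fm_{-1}] \subset \fm_{-2}$; but $\fn_w \subset \fm_{-1}$ forces any bracket staying inside $\fn_w$ to lie in $\fm_{-1}$, and $\fm_{-1} \cap \fm_{-2} = 0$, so $\fn_w$ is abelian. Hence $\fn_w$ is an IVHS by Lemma \ref{L:abel} (the same argument already appeared in the discussion preceding that lemma, forcing $\fe$ abelian). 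This gives one implication cleanly.

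For the converse, suppose $\fn_w = T_o(wC_w)$ is an IVHS. By definition of an integral element of $\cI$ (or via Lemma \ref{L:abel}), the holomorphic tangent space must be contained in $\fm_{-1}$; that is, $\fn_w \subset \fm_{-1}$. Since $\fn_w = \bigoplus_{\a\in\Delta(w)} \fm_{-\a}$ is a sum of root spaces, this containment holds if and only if each $\fm_{-\a} \subset \fm_{-1}$, i.e. $\a(\ttT_\varphi) = 1$ for every $\a \in \Delta(w)$. By \eqref{E:Dm1}, $\Delta(\fm_1) = \{\a\in\Delta \mid \a(\ttT_\varphi)=1\}$, so this is exactly the statement $\Delta(w) \subset \Delta(\fm_1)$.

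The step I expect to be the main obstacle — or at least the one requiring the most care — is pinning down precisely what ``$\fn_w$ is an IVHS'' means at the level of the grading, specifically ensuring that the defining condition forces $\fn_w \subset \fm_{-1}$ rather than merely $\fn_w \subset \fm_-$ together with abelianness. Here the subtlety is that the integral elements of $\cI$ in $\cT_o\check D \simeq \fm_{-1}$ are, by the discussion before Lemma \ref{L:abel}, the subalgebras $\fe \subset \fm_{-1}$ with $[\fe,\fe]=0$; the containment in $\fm_{-1}$ is built into the definition of $\cT_1 = G_\bC \times_{P_\varphi}(\fm^{-1}/\fp_\varphi)$, since the IPR restricts the holomorphic tangent directions to $\fm_{-1}$. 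Thus one must argue that $T_o(wC_w)$ being an integral element of the IPR literally requires it to sit inside the $\fm_{-1}$--part of $\cT_o\check D$, which follows because $\tAnn(\cT_1)$ annihilates exactly the components of degree $\le -2$. Once this is made explicit, the equivalence with $\Delta(w) \subset \Delta(\fm_1)$ is immediate from \eqref{E:Dm1}, and the abelianness condition becomes automatic rather than an extra constraint, completing the proof.
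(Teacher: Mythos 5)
Your proposal is correct and follows essentially the same route as the paper: reduce via Lemma \ref{L:abel} to the two conditions $\fn_w \subset \fm_{-1}$ and $[\fn_w,\fn_w]=0$, note that the first is equivalent to $\Delta(w)\subset\Delta(\fm_1)$ because $\fn_w$ is a sum of root spaces, and observe that abelianness is then automatic since $\fn_w$ is a subalgebra and $[\fm_{-1},\fm_{-1}]\subset\fm_{-2}$. The extra care you take in the converse direction (that an IVHS must lie in $\fm_{-1}$, not just in $\fm_-$) is exactly what Lemma \ref{L:abel} already encodes, so no gap remains.
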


\begin{proof}
By definition, $\fn_w \subset \fm_-$.  By Lemma \ref{L:abel}, $\fn_w$ is an IVHS if and only if $\fn_w \subset \fm_{-1}$ and $[\fn_w,\fn_w] = 0$.  The former is equivalent to $\Delta(w) \subset \Delta(\fm_1)$.  The latter then follows from the fact that $\fn_w$ is an algebra and $[\fn_w,\fn_w] \subset [\fm_{-1},\fm_{-1}] \subset \fm_{-2}$.
\end{proof}

\begin{proof}[Proof of Theorem \ref{T:SchubInt}]
By Lemma \ref{L:hom}(b), $X_w = \overline{C_w}$ is an integral variety if and only if $w X_w = \overline{N_w\cdot o}$ is an integral variety.  Since $N_w \subset G_\bC$ is an Lie subgroup with Lie algebra $\fn_w$, Lemma \ref{L:hom} implies $w X_w$ is an integral variety if and only if $\fn_w$ is an IVHS.  The proof now follows from Lemma \ref{L:DwDm1}.
\end{proof}

A variation of Hodge structure is \emph{maximal} if it not contained in any strictly larger VHS.  The \emph{Bruhat order} is the partial order on $W^\varphi$ defined by $w \le w'$ when $X_w \subset X_{w'}$.   Let
\begin{equation} \label{E:Wmaxint}
  W^\varphi_{\sI,\tmax} \ \dfn \ \{ w \in W^\varphi_\sI \ | \ w 
  \hbox{ is maximal in the Bruhat order} \} \, .
\end{equation}

\begin{proposition} \label{P:SchubMax}
Let $w \in W^\varphi_\sI$.  Then the Schubert VHS $X_w$ is a maximal variation of Hodge structure if and only if $w \in W^\varphi_{\sI,\tmax}$.  (That is, if $X_w$ is maximal among all Schubert VHS, then $X_w$ is maximal among all VHS.)
\end{proposition}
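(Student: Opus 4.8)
The plan is to prove the two implications separately; the forward one is formal, while the reverse implication --- the content of the parenthetical assertion --- will follow from a flat degeneration of an arbitrary containing VHS into a union of Schubert varieties. For the forward implication, suppose $X_w$ is a maximal VHS. Every Schubert VHS is in particular a VHS, by Theorem~\ref{T:SchubInt}; so if $w$ were not maximal in the Bruhat order on $W^\varphi_\sI$ there would exist $w' \in W^\varphi_\sI$ with $X_w \subsetneq X_{w'}$, and then $X_{w'}$ would be a VHS strictly containing $X_w$, contradicting maximality. Hence $w \in W^\varphi_{\sI,\tmax}$.

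For the reverse implication I argue the contrapositive: assuming $X_w$ is not a maximal VHS, I produce $w' \in W^\varphi_\sI$ with $w < w'$. By hypothesis there is a VHS $Z$ with $X_w \subsetneq Z$. Replacing $Z$ by its Zariski closure $\overline{Z} \subset \check D$ --- which is irreducible since $Z$ is connected, and which remains an integral variety of $\cI$ because integrality holds on the dense locus $Z$ and is a Zariski--closed condition --- I may assume the containing object is an algebraic VHS $Y$ with $X_w \subsetneq Y$, so that $\tdim Y > |w|$.

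Now I degenerate $Y$ to Schubert varieties. Fix a generic (regular dominant) one--parameter subgroup $\lambda : \bC^* \to T$ and form the flat limit $Y_0 = \lim_{t\to 0} \lambda(t)\cdot Y$ in $\check D$. By the standard theory of such Gr\"obner degenerations, $Y_0$ is a $B$--invariant subscheme, pure of dimension $\tdim Y$, whose reduced irreducible components are therefore Schubert varieties $X_{v_1},\dots,X_{v_m}$ with $v_i \in W^\varphi$ and $\tdim X_{v_i} = \tdim Y$. Two points complete the argument. First, since $X_w$ is $B$--invariant and hence fixed by $\lambda$, we have $X_w = \lambda(t)\cdot X_w \subset \lambda(t)\cdot Y$ for all $t$, so $X_w \subset Y_0$; as $X_w$ is irreducible it lies in some component $X_{v_i}$, and $\tdim X_{v_i} = \tdim Y > |w|$ forces $X_w \subsetneq X_{v_i}$, i.e.\ $w < v_i$ in the Bruhat order. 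Second, each $X_{v_i}$ is again a VHS: the subbundle $\cT_1 = G_\bC \times_{P_\varphi}(\fm^{-1}/\fp_\varphi)$ is $G_\bC$--invariant, so being an integral variety of $\cI$ is both a $T$--invariant and a Zariski--closed condition; hence every $\lambda(t)\cdot Y$ is integral, the limit $Y_0$ is integral, and each component $X_{v_i}$ is an integral variety with $v_i \in W^\varphi_\sI$ by Theorem~\ref{T:SchubInt}. Combining the two points yields $v_i \in W^\varphi_\sI$ with $w < v_i$, contradicting $w \in W^\varphi_{\sI,\tmax}$.

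I expect the main obstacle to be the second point: verifying that the integral (Griffiths transversality) condition is genuinely preserved under the degeneration, so that the Schubert components of $Y_0$ are Schubert VHS rather than merely Schubert varieties. This is exactly where the $G_\bC$--invariance of $\cT_1$ --- and thus the $T$--invariance and closedness of the integrality condition on the Hilbert scheme of $\check D$ --- must be invoked; establishing alongside it that the $B$--fixed flat limit is pure--dimensional with reduced Schubert components is the accompanying technical input I would need to pin down. An alternative, purely infinitesimal route (degenerating the abelian subalgebra $T_oY \subset \fm_{-1}$ of Lemma~\ref{L:abel} to a coordinate one via a one--parameter subgroup of $T$) runs into the analogous difficulty of arranging that the resulting root set is co--closed in the sense of Remark~\ref{R:Dw}(c), and I would expect the variety--level degeneration above to be the cleaner path.
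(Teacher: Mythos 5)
Your forward implication is fine, but the reverse implication (the real content) rests on a degeneration argument with two genuine gaps. First, the flat limit $Y_0=\lim_{t\to 0}\lambda(t)\cdot Y$ under a \emph{single} generic one--parameter subgroup $\lambda:\bC^*\to T$ is invariant only under $\lambda(\bC^*)$: for $s\in T$ one has $s\cdot Y_0=\lim_{t\to 0}\lambda(t)\cdot(sY)$, which is the limit of a different family, so $Y_0$ need not even be $T$--invariant, let alone $B$--invariant, and its components need not be Schubert varieties. The statement you want (every subvariety of $\check D$ degenerates flatly to a subscheme supported on a union of Schubert varieties) is a theorem of Brion, but its proof requires a more elaborate construction than one generic $\bC^*$--action. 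Second, even granting such a degeneration, your argument that the components $X_{v_i}$ are integral varieties only works where $Y_0$ is generically reduced: the closedness of the integrality condition shows that limits of tangent planes are integral elements, but identifying the limit plane at a general point of $X_{v_i}$ with $T_pX_{v_i}$ uses smoothness of the total space along that component, which fails when the component appears with multiplicity greater than one — and flat limits of this kind are routinely non-reduced, with no control over the multiplicity of the component containing $X_w$. You correctly flag this second point as the main obstacle; it is not resolved in your write-up.

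The paper's proof (Appendix \ref{S:bo}) bypasses all of this by working purely infinitesimally, exactly along the ``alternative route'' you set aside. Lemma \ref{L:nwmax} shows that if $w\in W^\varphi_\sI$ is Bruhat--maximal then $\fn_w$ is a maximal IVHS: if some abelian $\fe\subset\fm_{-1}$ strictly contains $\fn_w$, there is a root $\alpha\in\Delta(\fm_1)\setminus\Delta(w)$ with $\Phi_\alpha=\{\alpha\}\cup\Delta(w)$ closed, and choosing $\alpha$ \emph{minimal} with respect to the partial order $\alpha_1<\alpha_2$ iff $\alpha_2-\alpha_1\in\Delta^+(\fg_0)$ forces $\Delta^+\setminus\Phi_\alpha$ to be closed as well; Remark \ref{R:Dw}(c) then yields $w'\in W^\varphi$ with $\Delta(w')=\Phi_\alpha\subset\Delta(\fm_1)$, hence $w'\in W^\varphi_\sI$ and $\fn_w\subsetneq\fn_{w'}$, contradicting maximality (via Lemma \ref{L:ww'} and Corollary \ref{C:ww'}, which convert containment of the $\Delta(w)$'s into the Bruhat order on $W^\varphi_\sI$). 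This minimality trick is precisely the answer to the co--closedness difficulty you identified for the infinitesimal approach, and it makes the proposition a short root--combinatorial statement rather than a geometric degeneration.
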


\noindent Proposition \ref{P:SchubMax} is proved in Appendix \ref{S:bo}.  Given the proposition, we will refer to the Schubert varieties $X_w$ indexed by $w \in W^\varphi_{\sI,\tmax}$ as the \emph{maximal Schubert VHS}.

\begin{proposition} \label{P:sm}
If $G_\bC = \tSL_{r+1}\bC$ or $G_\bC = \tSp_{2r}\bC$, then the maximal Schubert VHS $X_w \subset \check D = G_\bC/P_\varphi$ are homogeneously embedded Hermitian symmetric spaces of the form
$$ 
  Y_1 \ = \ \tGr(a_1,n_1) \ \times \cdots \times \ \tGr(a_s,n_s) 
  \quad\hbox{or}\quad
  Y_2 \ = \ Y_1 \ \times \ \tLG(n,2n) \, .
$$ 
\end{proposition}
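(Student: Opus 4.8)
The plan is to convert the assertion into a purely combinatorial problem about the root system, resolve that problem by hand in types $A$ and $C$, and then read off the geometry of the associated orbit closures. By Proposition \ref{P:red} and \eqref{E:TI} we may assume $\ttT_\varphi = \ttT_I$. By Theorem \ref{T:SchubInt} and Lemma \ref{L:DwDm1}, $X_w$ is a Schubert VHS precisely when $\Delta(w) \subset \Delta(\fm_1)$. For such $w$ write $\Phi = \Delta(w)$; by Remark \ref{R:Dw}(c) this means $\Phi$ is closed and $\Delta^+\backslash\Phi$ is closed. Since the sum of two roots of $\Delta(\fm_1)$ lies in $\Delta(\fm_2)$, closedness of $\Phi\subset\Delta(\fm_1)$ is equivalent to asking that no two elements of $\Phi$ sum to a root, i.e.\ to $\fn_w$ being abelian (as in Lemma \ref{L:DwDm1}). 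Thus Schubert VHS correspond to abelian subsets $\Phi\subset\Delta(\fm_1)$ with $\Delta^+\backslash\Phi$ closed; and since inclusion of the inversion sets $\Delta(w) = \Delta^+\cap w\Delta^-$ implies the Bruhat order (hence the containment $X_w\subset X_{w'}$), the maximal Schubert VHS correspond to the inclusion--maximal such $\Phi$. By \eqref{E:wXw} the associated variety is the orbit closure $wX_w = \overline{N_w\cdot o}$, so it remains to classify the maximal $\Phi$ and identify these closures in types $A$ and $C$.

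For $G_\bC = \tSL_{r+1}\bC$, realize the roots as $e_p - e_q$. The marked set $I = \{i_1 < \cdots < i_s\}$ cuts $\{1,\ldots,r+1\}$ into consecutive segments, and \eqref{E:Dm1} identifies $\Delta(\fm_1)$ with a chain of rectangular ``$\tHom$'' blocks, the $t$-th block being the roots $e_p - e_q$ with $p,q$ in the two segments adjacent to $i_t$; consecutive blocks share a segment. The only failures of abelianness are the chaining relations $(e_p - e_q)+(e_q - e_{q'})$ across a shared segment. A direct computation shows that closedness of $\Delta^+\backslash\Phi$ forces, along each shared segment, a monotone ``threshold'': below the cut the left block is retained and above it the right block is retained. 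This decouples $\Phi$ into mutually commuting rectangular blocks $\tHom(\bC^{a_t},\bC^{n_t - a_t})$ supported on pairwise disjoint index sets. Each block is the holomorphic tangent space of a Grassmannian; because the blocks involve disjoint coordinates, $N_w$ is the direct product of the corresponding unipotent groups and the orbit closure is the homogeneously embedded product $Y_1 = \tGr(a_1,n_1)\times\cdots\times\tGr(a_s,n_s)$.

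For $G_\bC = \tSp_{2r}\bC$, realize the roots as $\pm e_i\pm e_j$ and $\pm 2e_i$, with $\s_r = 2e_r$ the long simple root. The decisive point is that $2e_p\in\Delta(\fm_1)$ can occur only when $r\in I$ (otherwise $2e_p$ lies in $\fm_\mathrm{even}$). When $r\notin I$, the short nodes contribute $\tHom$-blocks exactly as in type $A$, while the long roots $e_p+e_q$ that happen to lie in $\Delta(\fm_1)$ are excluded from every inclusion--maximal abelian $\Phi$: writing $e_p+e_q = (e_p - e_m)+(e_m + e_q)$ exhibits, via closedness of $\Delta^+\backslash\Phi$, a forced chain of roots incompatible with abelianness. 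Hence the analysis reduces to type $A$, and the maximal Schubert VHS are products $Y_1$ of Grassmannians. When $r\in I$, the block attached to $\s_r$ consists of the roots $e_p+e_q$ and $2e_p$ with $p,q$ in the final segment; this is the abelian module $\tSym^2\bC^n$, the holomorphic tangent space of the Lagrangian Grassmannian $\tLG(n,2n)$. The remaining short-node blocks decouple from it by the same threshold argument, so the orbit closure is $Y_2 = Y_1\times\tLG(n,2n)$. In all cases each factor is cominuscule, hence irreducible Hermitian symmetric, and the product embedding is homogeneous.

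The heart of the argument --- and the main obstacle --- is the combinatorial core underlying the second and third paragraphs: proving that closedness of $\Delta^+\backslash\Phi$ forces the threshold decoupling, so that no ``staircase'' configuration other than a disjoint union of rectangular blocks (together with at most one symmetric block) can be inclusion--maximal; and, in type $C$, controlling the interaction between the long roots and the chain of short-node blocks so that the unique non-$\tHom$ block that can survive is precisely the $\tSym^2$ Lagrangian piece. A secondary, more routine, task is to confirm that each orbit closure is genuinely the stated smooth homogeneous variety: one verifies that $N_w$ splits as the direct product of the unipotent subgroups of the individual blocks and that each factor's closure is the corresponding (Lagrangian) Grassmannian, so that $Y_1$, respectively $Y_2$, is homogeneously embedded in $\check D$.
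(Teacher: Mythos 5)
Your reduction of the problem to classifying the inclusion--maximal abelian subsets $\Phi\subset\Delta(\fm_1)$ with $\Delta^+\backslash\Phi$ closed is exactly the paper's starting point (Appendix \ref{A:p0}, via Corollary \ref{C:ww'} and Remark \ref{R:Dw}(c)), and your identification of the resulting blocks with tangent spaces of (Lagrangian) Grassmannians matches the paper's final step. But the proposal stops precisely where the work begins: the statement that ``closedness of $\Delta^+\backslash\Phi$ forces a monotone threshold'' is asserted (``a direct computation shows''), and you yourself flag it as the main obstacle. This is a genuine gap, and moreover the decoupling is \emph{not} a consequence of closedness of the complement alone; it needs three separate inputs. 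First, a saturation lemma (the paper's Step 3): if $\s_j+\cdots+\s_k\in\Delta(w)$ then so is every $\s_{j'}+\cdots+\s_{k'}$ with $j\le j'\le i_s\le k'\le k$ --- this is where closedness of $\Delta^+\backslash\Delta(w)$ enters, by stripping off simple roots of $\fm_0$ one at a time. Second, abelianness forces the $k$--range of one block and the $j$--range of the next to be separated (Step 4). Third --- and this is the step your sketch does not touch --- inclusion--maximality must be used to show each block is a \emph{full} rectangle and that consecutive cut points are adjacent (Steps 5--6); this requires exhibiting, for any smaller configuration, a strictly larger $\Phi'=\Phi\cup\{\a\}$ and checking that both $\Phi'$ and $\Delta^+\backslash\Phi'$ remain closed, in particular ruling out $\b\in\Delta(\fm_1)\backslash\Phi'$ and $\c\in\Delta^+(\fm_0)$ with $\b+\c=\a$. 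Without this step you only conclude that $\Delta(w)$ is \emph{contained in} a disjoint union of rectangles, not that it equals one, and the identification of $X_w$ with a product of Grassmannians does not follow.

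Three smaller points. Your type-$C$ exclusion of the roots $e_p+e_q$ when $r\notin I$ via the decomposition $(e_p-e_q)+2e_q$ is correct in spirit and is essentially the paper's argument, but when $r\in I$ the separation of the last $\tHom$--block from the $\tSym^2$--block is not literally ``the same threshold argument'' as in type $A$: it involves roots of the form $\s_{j,r-1}+\s_{k,r}$ and needs the saturation property for the symmetric block (the paper's Step 3(c) and Step 4 for $s=t-1$). Second, the paper first reduces to the case where every $\Delta_i(w)$ is nonempty by splitting $X_w$ as a product across a deleted node (Step 1); without that reduction the cut-point bookkeeping between consecutive marked nodes breaks down. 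Finally, a terminological slip: in $C_r$ the roots $e_p\pm e_q$ are short and $\pm 2e_i$ are long.
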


\noindent Here, $\tGr(a,n)$ is the Grassmannian of $a$-planes in $\bC^n$, and $\tLG(n,2n)\subset\tGr(n,2n)$ is the Lagrangian Grassmannian.

While maximal Schubert VHS of both forms $Y_1$ and $Y_2$ may occur in the symplectic case, only $Y_1$ arises in the case that $G_\bC = \tSL_{r+1}\bC$.  Proposition \ref{P:sm} is proved in Appendix \ref{A:p}, and illustrated by the examples of Appendix \ref{A:egA} and \ref{A:egC}.  As seen from the examples in Appendix \ref{A:egBD}, Proposition \ref{P:sm} fails for $\fm_\bC = \mathfrak{spin}_m\bC$.

\subsection{Infinitesimal variations of Hodge structure}\label{S:intelem}

Let $\tGr(\ell,\cT D) \to D$ be the Grassmann bundle of holomorphic tangent subspaces of dimension $\ell$.  Let $\cV_\ell(\cI) \subset \tGr(\ell,\cT D)$ denote the subbundle of IVHS of dimension $\ell$, cf. Section \ref{S:IPR}.  By Lemma \ref{L:hom}(a), $\cV_\ell(\cI)$ is a homogeneous subbundle of $\tGr(\ell,\cT D)$.  In this section we will identify the fibre $\cV_\ell(\cI)_o$ over $o \in D$.  Roughly speaking, Theorem \ref{T:intelem} states that $\cV_\ell(\cI)_o$ is a subvariety of $\tGr(\ell,\cT D)$ that is `spanned' by the orbits of the Schubert IVHS under the isotropy representation.

Define
\begin{equation} \label{E:WpJell}
  W^\varphi_\sI(\ell)  \ \dfn \ \{ w \in W^\varphi_\sI \ | \ |w|=\ell \} \, .
\end{equation}
Let $G_0 = \{ x \in G_\bC \ | \ \tAd_x \fm_j \subset \fm_j \ \forall \, j\}$ be the closed reductive subgroup of $G_\bC$ with Lie algebra $\fm_0$.  Given $w \in W^\varphi_\sI(\ell)$, view $\fn_w \in \tGr(\ell,\fm_{-1})$ as an element of $\bP \tw^\ell\fm_{-1}$, via the Pl\"ucker embedding.  Let $\hat \fn_w \subset \tw^\ell\fm_{-1}$ denote the corresponding line and let
\begin{subequations} \label{SE:I}
\begin{equation}
  \bI_w \ \dfn \ \tspan_\bC\,\{ G_0 \cdot \hat\fn_w\} \ \subset \ \tw^\ell\fm_{-1}
\end{equation}
be the linear span of the $G_0$--orbit of $\hat\fn_w$.  Set
\begin{equation}
  \bI_\ell \ \dfn \ \bigoplus_{w\in W^\varphi_\sI(\ell)} \bI_w \,.
\end{equation}
\end{subequations}

\begin{theorem} \label{T:intelem}
\begin{a_list}
\item The line $\hat\fn_w \subset \tw^\ell\fm_{-1}$ is a $G_0$--highest weight line.  In particular, the subspace $\bI_w \subset \tw^\ell\fm_{-1}$ is an irreducible $G_0$--module of highest weight $-\varrho_w$.  
\item The $\ell$--dimensional IVHS at $o \in D$ are given by 
$$
  \cV_\ell(\cI)_o \ = \ \tGr(\ell,\fm_{-1}) \ \cap \ 
  \bP\,\bI_\ell\, .
$$
\end{a_list}
\end{theorem}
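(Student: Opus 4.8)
For part (a), I would first verify that $\hat\fn_w=\bigwedge_{\a\in\Delta(w)}x_{-\a}$ is an $\fm_0$--highest weight vector. It is manifestly a $\ft_\bC$--weight vector, and by \eqref{E:rhow} its weight is $-\sum_{\a\in\Delta(w)}\a=-\varrho_w$. To see that it is annihilated by the positive root spaces $\fm_\beta$ ($\beta\in\Delta(\fm_0)\cap\Delta^+$) of $\fm_0$, note that $\beta(\ttT_\varphi)=0$, so for $\a\in\Delta(w)\subset\Delta(\fm_1)$ the bracket $[x_\beta,x_{-\a}]$ is a multiple of $x_{-(\a-\beta)}$ when $\a-\beta\in\Delta$ (and then $\a-\beta\in\Delta(\fm_1)\subset\Delta^+$), and is zero otherwise. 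The key point is that $\a-\beta\in\Delta$ forces $\a-\beta\in\Delta(w)$: since $\Delta(w)\subset\Delta(\fm_+)$ (Remark \ref{R:Wp}(a)) and $\beta\in\Delta(\fm_0)$, we have $\beta\in\Delta^+\backslash\Delta(w)$; were $\a-\beta\notin\Delta(w)$ then $\a-\beta\in\Delta^+\backslash\Delta(w)$ as well, so closedness of $\Delta^+\backslash\Delta(w)$ (Remark \ref{R:Dw}(c)) applied to $(\a-\beta)+\beta=\a$ would give $\a\in\Delta^+\backslash\Delta(w)$, a contradiction. Hence each nonzero summand of $x_\beta\cdot\hat\fn_w$ reintroduces a factor $x_{-(\a-\beta)}$ already present among the wedge factors, and so vanishes. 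Thus $\hat\fn_w$ spans an $\fm_0$--highest weight line, and its $G_0$--span $\bI_w$ is the irreducible $G_0$--module of highest weight $-\varrho_w$.

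For part (b) the plan is to reinterpret the integrality condition through a single $G_0$--equivariant contraction. Using the bracket $\fm_{-1}\ot\fm_{-1}\to\fm_{-2}$, define
\[
  \psi : \tw^\ell\fm_{-1} \to \tw^{\ell-2}\fm_{-1}\ot\fm_{-2},\qquad
  v_1\wedge\cdots\wedge v_\ell \mapsto
  \sum_{i<j}(-1)^{i+j}\,\big(v_1\wedge\cdots\widehat{v_i}\cdots\widehat{v_j}\cdots\wedge v_\ell\big)\ot[v_i,v_j].
\]
For a decomposable vector $\hat\fe$ the $(\ell-2)$--vectors $v_1\wedge\cdots\widehat{v_i}\cdots\widehat{v_j}\cdots$ are linearly independent, so $\psi(\hat\fe)=0$ if and only if all $[v_i,v_j]=0$, i.e. if and only if $\fe\subset\fm_{-1}$ is abelian. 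By Lemma \ref{L:abel} this says precisely that $\cV_\ell(\cI)_o=\tGr(\ell,\fm_{-1})\cap\bP(\ker\psi)$. Since each $\fn_w$ ($w\in W^\varphi_\sI(\ell)$) is an IVHS we have $\psi(\hat\fn_w)=0$, and equivariance of $\psi$ gives $\psi(\bI_w)=0$, whence $\bI_\ell\subset\ker\psi$. This already yields one inclusion, $\tGr(\ell,\fm_{-1})\cap\bP\bI_\ell\subset\cV_\ell(\cI)_o$.

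The crux, and the step I expect to be the main obstacle, is the reverse inclusion, which reduces to the exact identity $\ker\psi=\bI_\ell$. Here I would pass to Lie algebra homology. Grade $\tw^\sbullet\fm_-$ by the $\ttT_\varphi$--eigenvalue; then $\tw^\ell\fm_{-1}$ is exactly the top graded piece (eigenvalue $-\ell$) of $\tw^\ell\fm_-$, since any other $\ell$--fold wedge involves some $\fm_{-j}$ with $j\ge2$ and so has eigenvalue $\le-\ell-1$. Under the inclusion $\tw^{\ell-2}\fm_{-1}\ot\fm_{-2}\hookrightarrow\tw^{\ell-1}\fm_-$, $\eta\ot z\mapsto z\wedge\eta$, the map $\psi$ is identified with the restriction to $\tw^\ell\fm_{-1}$ of the Chevalley--Eilenberg boundary $\partial:\tw^\ell\fm_-\to\tw^{\ell-1}\fm_-$, so $\ker\psi$ is the space of $\ell$--cycles lying in top grade. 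Now $\partial$ preserves the $\ttT_\varphi$--grading while lowering wedge degree by one, and $\tw^{\ell+1}\fm_-$ has no vectors of eigenvalue $-\ell$ (its top eigenvalue is $-(\ell+1)$); hence no boundary lands in $\tw^\ell\fm_{-1}$. Consequently, in top grade the cycles coincide with the harmonic classes, and $\ker\psi$ is identified with the top--graded part of the homology $H_\ell(\fm_-)$.

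Finally I would invoke Kostant's theorem \cite{MR0142696}: as an $\fm_0$--module $H_\ell(\fm_-)\simeq\bigoplus_{w\in W^\varphi,\,|w|=\ell}V_{-\varrho_w}$, the harmonic representative of the summand $V_{-\varrho_w}$ being the class of $\hat\fn_w$ (consistent with part (a)). Because $\fm_0$ preserves the $\ttT_\varphi$--grading, the entire summand $V_{-\varrho_w}$ lies in eigenvalue $-\varrho_w(\ttT_\varphi)$; since $\varrho_w(\ttT_\varphi)\ge|w|$ with equality exactly for $w\in W^\varphi_\sI$ (see \eqref{E:Wint}), intersecting with the top grade $\tw^\ell\fm_{-1}$ selects precisely the summands indexed by $w\in W^\varphi_\sI(\ell)$. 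Hence $\ker\psi=\bigoplus_{w\in W^\varphi_\sI(\ell)}V_{-\varrho_w}=\bI_\ell$ by part (a), which together with the second paragraph would give $\cV_\ell(\cI)_o=\tGr(\ell,\fm_{-1})\cap\bP\bI_\ell$. The delicate point throughout is the identification $\ker\psi=\bI_\ell$: once one recognizes $\ker\psi$ as the top--graded cycles and observes (via the grading) that there are no boundaries in that degree, Kostant's description of $H_\sbullet(\fm_-)$ closes the argument.
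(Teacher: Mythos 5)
Your proof is correct and follows essentially the same route as the paper: identify $\cV_\ell(\cI)_o$ with the abelian subalgebras of $\fm_{-1}$ via Lemma \ref{L:abel}, recognize the kernel of the bracket contraction as the $\ttT_\varphi$--eigenvalue~$-\ell$ cycles in $\tw^\ell\fm_-$ (with no boundaries in that eigenspace), and then read off $\bI_\ell$ from Kostant's decomposition \eqref{E:homK} together with \eqref{E:Wint}. The only difference is that for part (a) you verify directly, via the closedness of $\Delta^+\backslash\Delta(w)$, that $\hat\fn_w$ is a highest weight line, whereas the paper simply cites this as part of Kostant's theorem; your verification is correct.
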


\begin{corollary} \label{C:intelem}
There exists a VHS of dimension $\ell$ if and only if there exists a Schubert VHS of dimension $\ell$.
\end{corollary}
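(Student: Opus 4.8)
The plan is to read the corollary off Theorem \ref{T:intelem}(b), which identifies the fibre of $\ell$-dimensional integral elements as $\cV_\ell(\cI)_o = \tGr(\ell,\fm_{-1}) \cap \bP\,\bI_\ell$. One direction is immediate: a Schubert VHS is by definition a Schubert variety $X_w$ whose smooth locus is a VHS, and $\tdim X_w = |w|$ by \eqref{E:dimXw}, so a Schubert VHS of dimension $\ell$ directly furnishes a VHS of dimension $\ell$. All the work lies in the converse, and the strategy is to translate ``there exists an $\ell$-dimensional VHS'' into the nonvanishing $\bI_\ell \neq 0$, whose very definition then forces $W^\varphi_\sI(\ell) \neq \emptyset$.

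First I would reduce the existence of an $\ell$-dimensional VHS to the non-emptiness of the fibre $\cV_\ell(\cI)_o$. Suppose $Z \subset D$ is a VHS with $\tdim_\bC Z = \ell$. At any point $z \in Z$ the holomorphic tangent space $T_z Z$ is an $\ell$-dimensional IVHS; writing $z = g^{-1}\cdot o$ with $g \in G_\bR$ and applying Lemma \ref{L:hom}(a), the pushforward $g_*(T_z Z)$ is again an $\ell$-dimensional IVHS, now at $o$. Via Lemma \ref{L:abel} this is recorded by an $\ell$-dimensional abelian subalgebra $\fe \subset \fm_{-1}$, that is, by a point of $\cV_\ell(\cI)_o$; hence $\cV_\ell(\cI)_o \neq \emptyset$.

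Now I would invoke Theorem \ref{T:intelem}(b): the equality $\emptyset \neq \cV_\ell(\cI)_o = \tGr(\ell,\fm_{-1}) \cap \bP\,\bI_\ell$ forces $\bP\,\bI_\ell \neq \emptyset$, and therefore $\bI_\ell \neq 0$. Since $\bI_\ell = \bigoplus_{w \in W^\varphi_\sI(\ell)} \bI_w$ is the empty direct sum when $W^\varphi_\sI(\ell) = \emptyset$, we conclude $W^\varphi_\sI(\ell) \neq \emptyset$. Choosing any $w \in W^\varphi_\sI(\ell)$, the Schubert variety $X_w$ has $\tdim X_w = |w| = \ell$ and is a VHS by Theorem \ref{T:SchubInt} (as $w \in W^\varphi_\sI$); thus $X_w$ is a Schubert VHS of dimension $\ell$, as required.

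I do not anticipate a serious obstacle: once Theorem \ref{T:intelem}(b) is in hand the corollary is a bookkeeping argument, and the genuine content has already been packaged into that theorem. The only points demanding care are (i) keeping the count in \emph{complex} dimensions consistent across the identification of $T_z Z$ with an abelian subalgebra $\fe \subset \fm_{-1}$ and of $\fn_w$ with $X_w$ (both equal $\ell$, since $\tdim_\bC \fe = \ell$ and $|\Delta(w)| = |w| = \ell$ by \eqref{E:|Dw|}), and (ii) the use of homogeneity (Lemma \ref{L:hom}) to transport an integral element from an arbitrary point of $Z$ to the base point $o$, so that the structural description of $\cV_\ell(\cI)_o$ in Theorem \ref{T:intelem}(b) applies.
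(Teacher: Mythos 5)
Your proof is correct and matches the paper's intent exactly: the paper offers no separate argument for Corollary \ref{C:intelem}, treating it as an immediate consequence of Theorem \ref{T:intelem}(b) via precisely the chain you spell out (homogeneity to move the integral element to $o$, nonemptiness of $\cV_\ell(\cI)_o$ forcing $\bI_\ell\neq 0$, hence $W^\varphi_\sI(\ell)\neq\emptyset$, and Theorem \ref{T:SchubInt} for the resulting Schubert VHS). The dimension bookkeeping you flag is the right point of care and is handled correctly.
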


\noindent We will see that the theorem is a straight forward consequence of Kostant's beautiful description \cite{MR0142696} of the Lie algebra homology $H_\sbullet(\fm_-)$.

\begin{proof}
Define $\d : \tw^\ell \fm_- \to \tw^{\ell-1} \fm_-$ by 
\begin{equation} \label{E:homd}
  \d(x_1\wedge\cdots\wedge x_\ell) \ \dfn \ 
  \sum_{j < k} (-1)^{j+k+1} [x_j,x_k] 
  \wedge x_1 \wedge \cdots \hat x_j \cdots \hat x_k \cdots \wedge x_\ell \,.
\end{equation}
Note that the intersection $\tGr(\ell,\fm_{-1}) \,\cap\,\tker\,\d$ is precisely the set of abelian subalgebras of $\fm_{-1}$.  By Lemma \ref{L:abel}, the fibre admits the identification $\cV_\ell(\cI)_o \simeq \{ \fe\subset \fm_{-1} \ | \ [\fe,\fe]=0\}$.  Thus,
\begin{equation}\label{E:V(I)}
  \cV_\ell(\cI)_o \ = \ \tGr(\ell,\fm_{-1}) \ \cap \ 
  \bP \,\tker \{ \d : \tw^\ell\fm_- \ \to \ \tw^{\ell-1}\fm_- \} \,.
\end{equation} 
(Here, we view $\tw^\ell\fm_{-1}$ as a subspace of $\tw^\ell\fm_-$, and identify $\tGr(\ell,\fm_{-1})$ with the subset $\tGr(\ell,\fm_-) \,\cap\, \bP \tw^\ell\fm_{-1}$.)  

The map $\d$ satisfies $\d^2=0$, and so defines Lie algebra homology groups
\begin{equation} \label{E:homD}
  H_\ell(\fm_-) \ \dfn \ 
  \frac{\tker\{ \d : \tw^\ell \fm_- \ \to \ \tw^{\ell-1} \fm_-\}}
  {\tim\{ \d : \tw^{\ell+1} \fm_- \ \to \ \tw^{\ell} \fm_-\}} \,.
\end{equation}
Moreover, $\d$ is a $\fm_0$--module map; therefore, $H_\ell(\fm_-)$ admits the structure of a $\fm_0$--module.  Let 
$$
  W^\varphi(\ell) \ \dfn \ \{ w \in W^\varphi \ | \ |w|=\ell \} \, .
$$
Kostant \cite[Corollary 8.1]{MR0142696} proved that the $\fm_0$--module decomposition is
\begin{equation} \label{E:homK}
  H_\ell(\fm_-) \ = \ \bigoplus_{w\in W^\varphi(\ell)} \bI_w \,,
\end{equation}
where $\bI_w$ is an irreducible $\fm_0$--module of highest weight $-\varrho_w$, and highest weight line $[\hat\fn_w]\subset H_\ell(\fm_-)$.

Note that, since $\bI_w$ is irreducible and $\ttT_\varphi$ lies in the center of $\fm_0$, the grading element $\ttT_\varphi$ acts on $\bI_w$ by the scalar $-\varrho_w(\ttT_\varphi)$.  It follows from \eqref{E:Wint} that the $-\ell$ eigenspace of $\ttT_\varphi$ in $H_\ell(\fm_-)$ is $\bI_\ell$.  On the other hand, since the eigenvalues of $\ttT_\varphi$ on $\fm_-$ are the negative integers $-1,\ldots,-\sfk$, it follows that the eigenvalues of $\ttT_\varphi$ on $\tw^\ell\fm_-$ lie in $\{ -\ell,\ldots,-\ell\sfk \}$.  Moreover, the $-\ell$ eigenspace in $\tw^\ell\fm_{-}$ is $\tw^\ell\fm_{-1}$.  Since the $-\ell$ eigenspace in $\tw^{\ell+1}\fm_-$ is trivial, it follows from \eqref{E:homD} and \eqref{E:homK} that 
$$
  \bI_\ell \ = \ 
  \tker\left\{ \d : {\tw^\ell\fm_{-1}} \ \to \ 
  \fg_{-2} \ot \tw^{\ell-1}\fg_{-1} \right\}
  \ \subset \ \tw^\ell\fm_{-1} \, .
$$
The theorem now follows from \eqref{E:V(I)}.
\end{proof}

\section{Characteristic cohomology of the infinitesimal period relation}\label{S:CC}

In Section \ref{S:coh} the three pertinent cohomologies (de Rham, characteristic and Lie algebra) are reviewed.  The main result, Theorem \ref{T:icc}, which describes the invariant characteristic cohomology in terms of dual Schubert classes, is stated and proven in Section \ref{S:Picc}.

\subsection{Cohomologies} \label{S:coh}

\subsubsection{Characteristic cohomology}\label{S:icc}

Let $\sA = \sA^\sbullet$ denote the smooth, real--valued, forms on $D$, and let $(\sA,\td)$ be the de Rham complex on $D$.  Recall (Definition \ref{D:IPR}) the graded differential ideal $\sI \subset \sA$ generated by sections of $\tAnn(T_1) \subset T^*D$.  The property $\td \sI \subset \sI$ implies that the exterior derivative descends to the quotient $\sA/\sI$; let $(\sA/\sI,\td)$ denote the associated quotient complex.  The corresponding cohomology $H^\sbullet_\sI(D)$ is the \emph{characteristic cohomology} of the IPR. Note that, given a variation of Hodge structure $Z \subset D$, we have a map $H^\sbullet_\sI(D) \to H^\sbullet(Z)$; the characteristic cohomology may be viewed as the cohomology inducing ordinary cohomology on VHS virtue of their being integrals of the IPR \cite[(III.A)]{MR2666359}.  

The exterior derivative also preserves the subspace $\sA^\MR \subset \sA$ of $\MR$--invariant forms, yielding a subcomplex $(\sA^\MR,\td)$; let $H^\sbullet(D)^\MR$ denote the corresponding cohomology.  Moreover, the homogeneity of $T_1$ (Lemma \ref{L:hom}) implies that the differential ideal $\sI$ is preserved by the (pull-back) action of $\MR$.  Therefore, the subspace $(\sA/\sI)^\MR \subset \sA/\sI$ of $\MR$--invariant elements is well-defined, and preserved by $\td$; the associated cohomology 
\begin{equation} \label{E:icc0}
  H^\ell_\sI(D)^\MR \ = \ 
  \frac{\tker \{ \td\,:\; (\sA/\sI)^\ell \ \to \ (\sA/\sI)^{\ell+1} \}^\MR}
       {\tim \{ \td\,:\; (\sA/\sI)^{\ell-1} \ \to \ (\sA/\sI)^\ell \}^\MR}
\end{equation}
is the \emph{invariant characteristic cohomology}.  As observed in \cite[(III.B)]{MR2666359},  given a global variation of Hodge structure $\Phi : S \to \Gamma\backslash D$, there is an induced map $\Phi^* : H^\sbullet_\sI(D)^\MR \to H^\sbullet(S)$.  The image may be viewed as the topological invariants of global variations of Hodge structure that may be defined universally (that is, independently of the monodromy groups $\Gamma$). 

The projection $\sA^\MR \to (\sA/\sI)^\MR$ commutes with exterior differentiation, and so induces a map $(\sA^\MR,\td) \to ( (\sA/\sI)^\MR,\td)$ of complexes.  Let 
\begin{equation}\label{E:piI}
  p_\sI \,:\, H^\sbullet(D)^\MR \ \to \ H^\sbullet_\sI(D)^\MR
\end{equation}
denote the induced ring homomorphism of cohomologies.

\subsubsection{Lie algebra cohomology} \label{S:lac}

Let $\fa$ be a Lie algebra.  Given $\theta \in \tw^\ell\fa^*$ and $x_0,\ldots,x_\ell \in \fa$, define a linear map 
$$
  \d \,:\, \tw^\ell\fa^* \ \to \ \tw^{\ell+1}\fa^*
$$
by 
\begin{equation}\label{E:d}
  (\d\theta)(x_0,\ldots,x_\ell) \ \dfn \ \sum_{i<j} (-1)^{i+j}
  \theta\left( [ x_i,x_j] , x_0 , \ldots, \hat x_i , \ldots ,\hat x_j , \ldots,
  x_\ell \right) \, .
\end{equation}
It is straightforward to check that $\d^2=0$.  In particular, $(\tw^\sbullet\fa^*,\d)$ is a complex.  The associated \emph{Lie algebra cohomology groups} are defined by  
$$
  H^\ell(\fa) \ \dfn \ 
  \frac{ \tker\{ \d \,:\, \tw^\ell\fa^* \ \to \ \tw^{\ell+1} \fa^* \} }
  {\tim\{ \d \,:\, \tw^{\ell-1}\fa^* \ \to \ \tw^\ell \fa^* \} } \,.
$$

\subsubsection{Relative Lie algebra cohomology}\label{S:rlac}

Given a subalgebra $\fb \subset \fa$, consider $\tAnn(\fb) \subset \fa^*$.  Equation \eqref{E:d} defines a map $\d : \tw\tAnn(\fb) \to \tw\tAnn(\fb)$ also satisfying $\d^2=0$.  The adjoint action of $\fb$ on $\fa$ naturally induces an $\fb$--module structure on $\tAnn(\fb)$.  Moreover, $\d$ preserves the subspace $\tw\tAnn(\fb)^\fb \subset (\tw\tAnn(\fb))$ of $\fb$--invariants, cf. \cite[\S22]{MR0024908}.   The subcomplex $((\tw\tAnn(\fb))^\fb , \d)$ defines the \emph{relative Lie algebra cohomology} 
$$
  H^\ell(\fa,\fb ) \ = \ 
  \frac{ \tker\{ \d \,:\, 
         \tw^\ell\tAnn(\fb) \ \to \ \tw^{\ell+1}\tAnn(\fb) \}^\fb }
  {\tim\{ \d \,:\, \tw^{\ell-1}\tAnn(\fb) \ \to \ \tw^\ell\tAnn(\fb) \}^\fb } \,.
$$

In the case $\fa = \fg_\bR$ and $\fb = \fh_\varphi$, standard arguments yield 
\begin{equation} \label{E:idco0'}
  H^\sbullet(\mR,\hp) \ \simeq \ H^\sbullet(D)^\MR \,, 
\end{equation}
cf. \cite{MR0024908, MR928600}.  Moreover, I claim that complexifying yields
\begin{equation} \label{E:idco0}
  H^\sbullet(D,\bC)^\MR \ \simeq \ H^\sbullet(\fm_\bC,\fm_0) 
  \ \simeq \ H^\sbullet(\check D,\bC) \,.
\end{equation}
To see why this is the case, follow the notation in the proof of Proposition \ref{P:HSgr} to define 
$$
  \check \fg_\bR \ \dfn \ \tspan_\bR\{ \bi h_i \ | \ 1 \le i \le r \} \ \op \ 
  \tspan_\bR\{ \z_\a \,,\, \bi\x_\a \ | \ \a \in \Delta^+ \} \, ,
$$
a compact real form of $\fg_\bC$.  Let $\check G_\bR \subset G_\bC$ denote the corresponding compact Lie subgroup.  Note that $\fh_\varphi = \check \fg_\bR \cap \fp_\varphi$.  Thus,
$$
  \check D  \ = \ \check G_\bR / H_\varphi \, ,
$$
and the analog of \eqref{E:idco0'} for $\check D$ is $H^\sbullet(\check\fg_\bR,\hp) = H^\sbullet(\check D)^{\check G_\bR}$.  The two relative Lie algebra cohomologies $H^\sbullet(\fg,\hp)$ and $H^\sbullet(\check\fg,\hp)$ are both real forms of $H^\sbullet(\fg_\bC,\fg_0)$.  Thus, $H^\sbullet(D,\bC)^\MR\simeq  H^\sbullet(\fm_\bC,\fm_0) \simeq H^\sbullet(\check D,\bC)^{\check G_\bR}$.  So, to establish \eqref{E:idco0}, it remains to observe that $H^\sbullet(\check D) = H^\sbullet(\check D)^{\check G_\bR}$; this is well-known, cf. \cite[Theorem 2.3]{MR0024908}.

\subsubsection{Kostant's theorem}\label{S:kostant}

As noted in Section \ref{S:gr}, the subalgebra $\fm_-$ is a $\fm_0$--module.  It is straightforward to confirm that $\d \,:\, \tw^\ell(\fm_-)^* \ \to \ \tw^{\ell+1} (\fm_-)^*$ is a $\fm_0$--module map.  As a consequence, $H^\ell(\fm_-)$ naturally admits the structure of an $\fm_0$--module.  Kostant \cite{MR0142697} showed that $H^\sbullet(\fm_\bC,\fm_0) = (H^\bullet(\fm_-) \ot H^\bullet(\fm_-)^*)^{\fm_0}$.  Taken with \eqref{E:idco0}, this yields
\begin{equation} \label{E:idco1}
  H^\sbullet(D,\bC)^{G_\bR} \ = \ 
  (H^\bullet(\fm_-) \ot H^\bullet(\fm_-)^*)^{\fm_0} \,.
\end{equation}

Kostant described the $\fm_0$--module structure of $H^\sbullet(\fm_-)$ as follows.  Let $H_{\varrho_w}$ denote the irreducible $\fm_0$--module of \emph{lowest} weight $\varrho_w$, cf. \eqref{E:rhow}.  By \cite[Theorem 5.14]{MR0142696}, 
\begin{equation} \label{E:Hell}
  H^\ell(\fm_-) \ = \ 
  \bigoplus_{w\in W^\varphi(\ell)} H_{\varrho_w} \,,
\end{equation} 
is a $\fg_0$--module decomposition.  Moreover, the modules are inequivalent; that is, given $w,w'\in W^\varphi$, we have $H_{\varrho w}\simeq H_{\varrho w'}$ if and only if $w = w'$.  It is then a consequence of Schur's lemma that
\begin{equation} \label{E:idco1'}
  (H^\bullet(\fm_-) \ot H^\bullet(\fm_-)^*)^{\fm_0} \ = \ 
  \bigoplus_{w\in W^\varphi} \left( H_{\varrho_w} \,\ot\, H_{\varrho_w}^*
  \right)^{\fm_0} \,,
\end{equation}
where $H_{\varrho_w}^*$ is the $\fm_0$--module dual to $H_{\varrho_w}$.  Let
\begin{equation} \label{E:Hw}
  \mathscr{H}_w \ \dfn \ 
  \left( H_{\varrho_w} \,\ot\, H_{\varrho_w}^*
  \right)^{\fm_0} \, .
\end{equation}
Then \eqref{E:idco1}, \eqref{E:idco1'} and \eqref{E:Hw} yield
\begin{equation} \label{E:idco2}
  H^\sbullet(D,\bC)^{G_\bR} \ = \ 
   \bigoplus_{w\in W^\varphi} \mathscr{H}_w \, .
\end{equation}

\subsubsection{Schubert classes}\label{S:schubcl}

The Schubert classes $\{ [X_w] \ | \ w \in W^\varphi \}$ form a free basis of the integer homology $H_\bullet(\check D , \bZ)$, cf. \cite{MR0077878}.  The integral homology groups have no torsion \cite{MR0077878, MR0062750}; thus, the Schubert classes also form a basis of $H_\bullet(\check D , \bC)$.  Let $\{ \bx_w \ | \ w \in W^\varphi \}$ denote the dual basis of $H^\sbullet(\check D,\bC)$; here our degree convention is $\bx_w \in H^{2|w|}(\check D,\bC)$.  Making use of \eqref{E:idco0}, we also regard these classes as a basis of $H^\sbullet(D,\bC)^\MR$. 

By Schur's lemma (see also \cite[Theorem 5.2]{MR0142697}), $\tdim\,\mathscr{H}_w = 1$.  Under the identification \eqref{E:idco2}, Kostant \cite{MR0142697} showed that 
\begin{equation} \label{E:Hwxw}
  \mathscr{H}_w \ = \ \tspan_\bC\{ \bx_w \} \, .
\end{equation}
Thus, \eqref{E:idco2} and \eqref{E:Hwxw} yield
\begin{equation} \label{E:idco3}
  H^\sbullet( D , \bC )^{G_\bR}  \ = \ 
  \tspan_\bC\{ \bx_w \ |  w \in W^\varphi \} \, .
\end{equation}
Since the classes $\bx_w$ are represented by real forms, we have $H^\sbullet(D)^{G_\bR} = \tspan_\bR\{\bx_w \ | \ w \in W^\varphi \}$.

\subsection{Invariant characteristic cohomology of the IPR} \label{S:Picc}

\begin{theorem} \label{T:icc}
The ring homomorphism $p_\sI : H^\sbullet(D)^{G_\bR} \to H^\sbullet_\sI(D)^{G_\bR}$ of \eqref{E:piI} is surjective with kernel 
$$
  \tker\,p_\sI \ = \ \bigoplus_{w\in W^\varphi\backslash W^\varphi_\sI} 
  \mathscr{H}_w \,.
$$
In particular, the map $p_\sI$ is given by
$$
  \bc = \sum_{w\in W^\varphi} c_w \bx_w \quad \mapsto \quad 
  \bc_\sI \equiv \sum_{w\in W^\varphi_\sI} c_w \bx_w \,.
$$
Thus, the invariant characteristic cohomology satisfies $H^\sbullet_\sI(D)^{G_\bR} \equiv \tspan_\bR\{ \bx_w \ | \ w \in W^\varphi_\sI \}$.
\end{theorem}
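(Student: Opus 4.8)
The plan is to reduce the entire statement to a computation in relative Lie algebra cohomology and then read it off from Kostant's description \eqref{E:Hell}, \eqref{E:homK}. First I would model both invariant cohomologies algebraically. By \eqref{E:idco0'}--\eqref{E:idco0} the invariant de Rham cohomology $H^\sbullet(D)^{G_\bR}$ is computed, after complexification, by the complex $C^\sbullet = (\tw\,\tAnn(\fm_0))^{\fm_0}$ with the relative differential \eqref{E:d}, and $H^\sbullet(C^\sbullet) = \op_{w\in W^\varphi}\mathscr{H}_w$ with $\mathscr{H}_w = \tspan_\bC\{\bx_w\}$ by \eqref{E:idco2}, \eqref{E:Hwxw}. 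Since the ideal $\sI$, the invariant subspace, and the splitting $\fm_\bC = \fm_-\op\fm_0\op\fm_+$ are all determined by the algebraic data $(\fm_\bC,\fm_0,\ttT_\varphi)$, and since invariants are exact over the compact form $\cMR$ (so that passing to invariants commutes with the quotient by the ideal, exactly as in the derivation of \eqref{E:idco0}), the invariant characteristic cohomology is computed by the quotient complex $(C^\sbullet/\cJ,\d)$, where $\cJ$ is the complexified, $\fm_0$--invariant differential ideal generated by $\tAnn(T_1)$; by \eqref{E:hppC} this generating subspace is $\tAnn(\fm_1\op\fm_{-1})$. With these identifications $p_\sI$ is the map on cohomology induced by the surjection $C^\sbullet\twoheadrightarrow C^\sbullet/\cJ$.

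Next I would compute $p_\sI$ on the Kostant basis by tracking the $\ttT_\varphi$--weight. The one--form generators of $\cJ$ are precisely the cotangent covectors of $\ttT_\varphi$--weight $\ge 2$ in absolute value, so the passage $C^\sbullet \to C^\sbullet/\cJ$ factors through the map that discards every factor of weight $\ge 2$ and retains only the minimal pieces $\fm_{-1}^*$ (holomorphically) and $\fm_1^*$ (antiholomorphically). By Kostant the class $\bx_w$ is carried by the irreducible $H_{\varrho_w}\subset H^{|w|}(\fm_-)\subset\tw^{|w|}\fm_-^*$, on which the central element $\ttT_\varphi$ acts by the scalar $\varrho_w(\ttT_\varphi)$. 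As each factor of $\fm_-^*$ has weight $\ge 1$, the minimal total weight on $\tw^{|w|}\fm_-^*$ is $|w|$, attained exactly on $\tw^{|w|}\fm_{-1}^*$; hence $H_{\varrho_w}$ survives the projection if and only if $\varrho_w(\ttT_\varphi) = |w|$, which by \eqref{E:rhowTp}--\eqref{E:Wint} is precisely the condition $w\in W^\varphi_\sI$. The same holds for the dual factor, so $p_\sI(\bx_w)=0$ for $w\notin W^\varphi_\sI$, giving $\op_{w\in W^\varphi\backslash W^\varphi_\sI}\mathscr{H}_w\subseteq\tker\,p_\sI$.

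It then remains to prove the reverse inclusion and surjectivity, equivalently that $\dim H^\sbullet_\sI(D)^{G_\bR} = |W^\varphi_\sI|$ with $\{p_\sI(\bx_w)\mid w\in W^\varphi_\sI\}$ a basis; the formula $\bc\mapsto\bc_\sI$ and the ring statement are then immediate. Linear independence I expect from Schur's lemma: for $w\in W^\varphi_\sI$ the module $H_{\varrho_w}$ lies entirely in $\tw^{|w|}\fm_{-1}^*$ and maps isomorphically, and the $H_{\varrho_w}$ are pairwise inequivalent irreducible $\fm_0$--modules, so their images span independent invariant lines. The genuine difficulty, and the main obstacle, is that $\cJ$ is a \emph{differential} ideal: besides the weight--$\ge 2$ one--forms it contains their exterior derivatives, and by \eqref{E:d} the derivative of a weight--$2$ covector has a level--zero component dual to the bracket $\tw^2\fm_{-1}\to\fm_{-2}$. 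Thus $C^\sbullet/\cJ$ is a nontrivial quotient complex rather than simply $(\tw(\fm_1^*\op\fm_{-1}^*))^{\fm_0}$, and one must check that these extra two--form relations neither annihilate a surviving $p_\sI(\bx_w)$ nor create spurious classes. I would control this via the long exact sequence of $0\to\cJ\to C^\sbullet\to C^\sbullet/\cJ\to 0$, showing the connecting homomorphism vanishes; Kostant's explicit harmonic representatives are the key tool, since for $w\in W^\varphi_\sI$ the representative of $\bx_w$ is built from root covectors indexed by $\Delta(w)\subset\Delta(\fm_1)$ and so already lies in the level--zero part of the complex as a genuine $\d$--cocycle, which should force both the exactness of the kernel and the surjectivity of $p_\sI$.
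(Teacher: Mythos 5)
Your reduction to relative Lie algebra cohomology, the $\ttT_\varphi$--weight argument showing $p_\sI(\bx_w)=0$ for $w\notin W^\varphi_\sI$, and the appeal to Kostant's decomposition and Schur's lemma all match the paper's proof. You have also correctly isolated the one real difficulty: the ideal is a \emph{differential} ideal, so the quotient complex carries the extra relations coming from $\d$ of the weight--$\ge2$ covectors, whose level--zero component is dual to the bracket $\tw^2\fm_{-1}\to\fm_{-2}$, and one must rule out both spurious classes and accidental vanishing. But this is exactly where your write-up stops being a proof: ``showing the connecting homomorphism vanishes \dots should force both the exactness of the kernel and the surjectivity'' is a plan rather than an argument, and the long exact sequence of $0\to\mathfrak{J}\to C^\sbullet\to C^\sbullet/\mathfrak{J}\to 0$ is heavier machinery than the situation requires.

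The paper closes this gap with two observations you have all the ingredients for but do not combine. First, the invariant quotient complex vanishes in odd degree: complexified it is $\bigl(\left(\tw(\fm_{-1})^*/\mathfrak{J}_2\right)\ot\left(\tw(\fm_{-1})^*/\mathfrak{J}_2\right)^*\bigr)^{\fm_0}$, and since $\ttT_\varphi$ acts on $\tw^p(\fm_{-1})^*/\mathfrak{J}_2^p$ by $p$ and on the dual factor by $-q$, invariance forces $p=q$, i.e.\ total degree $2p$. Hence the induced differential on the invariant quotient complex is identically zero and the invariant characteristic cohomology \emph{is} that complex --- no connecting homomorphism, no spurious classes, and no accidental vanishing are possible. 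Second, the quotient $\tw^\ell(\fm_{-1})^*/\mathfrak{J}_2^\ell$ is precisely the $\ttT_\varphi$--eigenvalue--$\ell$ subspace $H^\ell_\ell$ of Kostant's $H^\ell(\fm_-)$: in that lowest eigenspace the cocycle condition is automatic (the eigenvalue--$\ell$ part of $\tw^{\ell+1}(\fm_-)^*$ is zero, each factor having eigenvalue $\ge1$), and the coboundaries are exactly $\mathfrak{J}_2^\ell$. So the ``extra two-form relations'' you worry about are accounted for by Kostant's theorem itself, giving $H^\ell_\ell=\op_{w\in W^\varphi_\sI(\ell)}H_{\varrho_w}$ via \eqref{E:Hell} and hence $H^{2\ell}_\sI(D,\bC)^{G_\bR}=\op_{w\in W^\varphi_\sI(\ell)}\mathscr{H}_w$, from which the whole theorem follows. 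In short: right framework, correct identification of the crux, but the crux itself is left unresolved in your proposal.
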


\noindent Above, we use $\equiv$ (in place of $=$) to emphasize that $\bc_\sI \in H^\sbullet(D)^{G_\bR}/\tker\,p_\sI$.  The corollary below, first proved in \cite[(III.B.1)]{MR2666359}, follows immediately.

\begin{corollary*}
The invariant characteristic cohomology vanishes in odd degree.  In even degree $2\ell$, the invariant characteristic cohomology is of Hodge type $(\ell,\ell)$.
\end{corollary*}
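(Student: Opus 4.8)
The plan is to compute the invariant characteristic cohomology by the same Lie--algebra model already used for the invariant de Rham cohomology in \eqref{E:idco0}, and then to read off $p_\sI$ from Kostant's harmonic representatives. The correspondence between $G_\bR$--invariant forms and relative cochains identifies $(\sA^\sbullet)^{G_\bR}\ot\bC$ with $C^\sbullet = (\tw^\sbullet\tAnn(\fm_0))^{\fm_0}$; and, by exactness of averaging over the compact real form $\check G_\bR$ (as in \S\ref{S:rlac}), it identifies $(\sA/\sI)^{G_\bR}\ot\bC$ with the quotient $\overline C{}^\sbullet = (\tw^\sbullet\tAnn(\fm_0)/\cJ)^{\fm_0}$, where $\cJ$ is the \emph{differential} ideal generated by $\tAnn(T_1)$. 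Under $\tAnn(\fm_0)=\fm_-^*\op\fm_+^*$, the generators of $\cJ$ are the one--forms dual to $\fm_{\pm\ell}$ with $\ell\ge2$, \emph{together with their differentials} --- and it is this last point that must be handled with care. Killing the one--forms leaves the exterior algebra on $\bar V\dfn\fm_{-1}^*\op\fm_1^*$, while the image under $\d$ (in the sense of \eqref{E:d}) of $\fm_{-2}^*$ contributes relations $R_-\subset\tw^2\fm_{-1}^*$ dual, via \eqref{E:homd}, to the bracket $\beta:\tw^2\fm_{-1}\to\fm_{-2}$ (and symmetrically $R_+\subset\tw^2\fm_1^*$). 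Thus $\overline C{}^\sbullet=\big(\tw^\sbullet\bar V/\langle R_-\op R_+\rangle\big)^{\fm_0}$.

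I would first record two structural facts. (i) Since $\ttT_\varphi$ is central in $\fm_0$, every invariant cochain has $\ttT_\varphi$--weight $0$; as $\fm_{-1}^*$ and $\fm_1^*$ have $\ttT_\varphi$--weights $+1$ and $-1$, a weight--zero element of $\tw^\sbullet\bar V$ has equal holomorphic and antiholomorphic degree, so $\overline C{}^\sbullet$ is concentrated in even degrees. Because $\d$ preserves the $\ttT_\varphi$--weight, the map $\overline C{}^{2\ell}\to\overline C{}^{2\ell+1}=0$ forces the induced differential to vanish, giving $H^\sbullet_\sI(D)^{G_\bR}\ot\bC=\overline C{}^\sbullet$ (this already yields the Corollary). (ii) In bidegree $(\ell,\ell)$ the quotient factors as $\big(\tw^\ell\fm_{-1}^*/\langle R_-\rangle_\ell\big)\ot\big(\tw^\ell\fm_1^*/\langle R_+\rangle_\ell\big)$. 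The identity $\langle R_-\wedge\tw^{\ell-2}\fm_{-1}^*,\,\omega\rangle=\langle\,\cdot\,,\d\omega\rangle$ shows the annihilator of $\langle R_-\rangle_\ell$ in $\tw^\ell\fm_{-1}$ is exactly $\tker\{\d:\tw^\ell\fm_{-1}\to\tw^{\ell-1}\fm_-\}=\bI_\ell$ of Theorem \ref{T:intelem}; hence the holomorphic factor is $\bI_\ell^*$ and (by the conjugate computation) the antiholomorphic factor is $\bI_\ell$. Therefore $\overline C{}^{2\ell}\simeq(\bI_\ell^*\ot\bI_\ell)^{\fm_0}=\tEnd_{\fm_0}(\bI_\ell)$, which by the multiplicity--free decomposition $\bI_\ell=\op_{w\in W^\varphi_\sI(\ell)}\bI_w$ has dimension $|W^\varphi_\sI(\ell)|$.

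Next I would analyze $p_\sI$ on the basis $\{\bx_w\}$ of \eqref{E:idco3}. Kostant's harmonic representative of $\bx_w$ lies in $H_{\varrho_w}\ot H_{\varrho_w}^*\subset\tw^{|w|}\fm_-^*\ot\tw^{|w|}\fm_+^*$, and $\ttT_\varphi$ acts on the holomorphic factor $H_{\varrho_w}$ by $\varrho_w(\ttT_\varphi)$. If $w\notin W^\varphi_\sI$ then $\varrho_w(\ttT_\varphi)>|w|$ by \eqref{E:rhowTp}, so $H_{\varrho_w}$ cannot lie in the weight--$|w|$ space $\tw^{|w|}\fm_{-1}^*$ and must involve a cotangent factor dual to some $\fm_{-\ell}$, $\ell\ge2$; hence the representative lies in $\cJ$ and $p_\sI(\bx_w)=0$. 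This gives $\tker\,p_\sI\supseteq\op_{w\in W^\varphi\backslash W^\varphi_\sI}\mathscr{H}_w$.

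Finally I would show the remaining classes survive and span, via integration over the Schubert VHS. For $w\in W^\varphi_\sI$ the variety $X_w$ is an integral of the IPR (Theorem \ref{T:SchubInt}); since forms in $\sI$ restrict to zero on the smooth locus of an integral variety and $X_w$ is a closed cycle, integration over $[X_w]$ descends to $H^\sbullet_\sI$. As $\{\bx_w\}$ is dual to $\{[X_w]\}$, one gets $\int_{[X_{w'}]}p_\sI(\bx_w)=\langle\bx_w,[X_{w'}]\rangle=\d_{ww'}$ for $w,w'\in W^\varphi_\sI$ of equal length, so $\{p_\sI(\bx_w):w\in W^\varphi_\sI\}$ is linearly independent and nonzero. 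Combined with $\dim H^{2\ell}_\sI(D)^{G_\bR}=|W^\varphi_\sI(\ell)|$ from step (ii), the map $p_\sI$ is surjective and $\tker\,p_\sI=\op_{w\notin W^\varphi_\sI}\mathscr{H}_w$ exactly, as claimed. I expect the main obstacle to be step (ii): recognizing that the differential ideal contributes the bracket--dual relations $R_\pm$ and identifying the quotient with $\tEnd_{\fm_0}(\bI_\ell)$ through Theorem \ref{T:intelem} --- a naive treatment that forgets the differentials of $\tAnn(T_1)$ over-counts the cohomology. Justifying that the invariant characteristic cohomology is computed by $\overline C{}^\sbullet$ (exactness of averaging over $\check G_\bR$) is a second, more routine, technical point.
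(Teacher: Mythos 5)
Your argument for the Corollary is essentially the paper's own: both pass to the $\fh_\varphi$‑invariant relative Lie algebra cochain complex modulo the differential ideal generated by $\tAnn(T_1)$, reduce to the exterior algebra on $\fm_{-1}^*\op\fm_1^*$, and observe that since $\ttT_\varphi$ is central in $\fm_0$ and acts with weights $\pm1$ on these factors, any invariant must have equal holomorphic and antiholomorphic degree — giving vanishing in odd degree and type $(\ell,\ell)$ in degree $2\ell$. The additional material on $p_\sI$, the identification of the quotient with $\tEnd_{\fm_0}(\bI_\ell)$, and the Schubert‑cycle integration reproves the surrounding Theorem \ref{T:icc} (by a slightly more geometric route for surjectivity) but is not needed for the Corollary itself.
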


The remainder of this section is given to the proof of the theorem.  Use \eqref{E:hphiperp} to identify $(\hpp)^*$ with $\tAnn(\hp) \subset \fg_\bR^*$.  Then \eqref{E:T_1} yields $\mathfrak{i} \dfn \tAnn(\fh^\perp_1) \subset (\hpp)^* \simeq \tAnn(\hp)$, and let $\mathfrak{I} \subset \tw\tAnn(\hp)$ be the graded ideal generated by $\mathfrak{i} \op \d(\mathfrak{i}) \subset \tAnn(\hp) \op \tw^2 \tAnn(\hp)$.  Then $\d$ preserves $\mathfrak{I}$, and this induces a complex $( \tAnn(\hp)/\mathfrak{I} \,,\, \d_\mathfrak{I} )$.  Moreover, since $\fh^\perp_1$ is an $\fh_\varphi$--module, it follows that $\mathfrak{I}$ is also an $\fh_\varphi$--module.  Thus, $( \tAnn(\hp)/\mathfrak{I} \,,\, \d_\mathfrak{I} )$ is a complex of $\fh_\varphi$--modules.  Therefore, the space $(\tw\tAnn(\hp)/\mathfrak{I})^{\fh_\varphi}$ of $\fh_\varphi$--invariants defines a sub-complex $( (\tw\tAnn(\hp)/\mathfrak{I})^{\fh_\varphi} \,,\, \d_\mathfrak{I} )$.  As shown in \cite[(III.B.1)]{MR2666359}, standard arguments (analogous to those establishing \eqref{E:idco0'}) yield 
\begin{equation} \label{E:icc1}
  H^\ell_\sI(D)^{G_\bR} \ = \ 
  \frac{ \tker\{ \d_\mathfrak{I} \,:\, 
                 \tw^\ell\tAnn(\hp)/\mathfrak{I}^\ell 
         \ \to \ \tw^{\ell+1} \tAnn(\hp)/\mathfrak{I}^{\ell+1} 
         \}^{\fh_\varphi} }
  {\tim\{ \d_\mathfrak{I} \,:\, 
                 \tw^{\ell-1}\tAnn(\hp)/\mathfrak{I}^{\ell-1} 
         \ \to \ \tw^\ell \tAnn(\hp)/\mathfrak{I}^\ell 
         \}^{\fh_\varphi} } \,.
\end{equation}

We now proceed to analyze the complexification $H^\sbullet_\sI(D,\bC)^{G_\bR}$.  We have noted \eqref{E:hpp} that $\fh_\varphi^\perp \ot_\bR \bC = \fm_{\not=0}$.  Set
\begin{equation} \label{E:j}
  \mathfrak{j} \ \dfn \ \mathfrak{i} \ot_\bR \bC \ = \ 
  \tAnn( \fm_{\pm 1} ) \ \subset \ (\fm_{\not=0})^* \, . 
\end{equation}
As an $\fm_0$--module, $\mathfrak{j} \simeq ( \fm_{\pm2} \op \cdots \op \fm_{\pm k})^*$.  Let $\d(\mathfrak{j})$ denote the image of $\mathfrak{j}$ under the map $\d : (\fm_{\not=0})^* \to \tw^2 (\fm_{\not=0})^*$ defined by \eqref{E:d}.  Let $\mathfrak{J} \subset \tw (\fm_{\not=0})^*$ denote the graded ideal generated by $\mathfrak{j}\op\d(\mathfrak{j}) \subset (\fm_{\not=0})^* \op \tw^2 (\fm_{\not=0})^*$.  Then $\mathfrak{J} = \mathfrak{I}\ot_\bR \bC$, and 
\begin{equation} \label{E:c1}
  ( \tw\tAnn(\hp)/\mathfrak{I} )\ot_\bR \bC
  \ = \ \tw(\fm_{\not=0})^*/\mathfrak{J} \,.
\end{equation}
By \eqref{E:j}, we have 
\begin{equation} \label{E:c2}
  \tw(\fm_{\not=0})^*/\mathfrak{J} \ = \ 
  \tw(\fm_{\pm1})^*/\mathfrak{J} \, .
\end{equation}
Note that the map $\d : (\fm_{\pm2})^* \to \tw^2 (\fm_{\pm1})^*$, defined by \eqref{E:d}, takes values in $\tw^2 (\fm_{-1})^* \,\op\, \tw^2 (\fm_1)^*$.  Moreover, under the identification $\tw^2 (\fm_1)^* = \tw^2 (\overline\fm_{-1})^*$ given by \eqref{E:mellconj}, the map may be expressed as $\d = \d_2\op\bar\d_2$, with $\d_2 : (\fm_{-2})^* \to \tw^2 (\fm_{-1})^*$.  Let $\mathfrak{J}_2 \subset \tw(\fm_{-1})^*$ and $\bar{\mathfrak{J}}_2 = \tw(\overline\fm_{-1})^*$ be the graded ideals generated by $\tim\,\d_2$ and $ \tim\,\bar\d_2$, respectively.  By \eqref{E:c1} and \eqref{E:c2}, we have 
\begin{equation} \label{E:icc2}
  ( \tw\tAnn(\hp)/\mathfrak{I} )\ot_\bR \bC
  \ = \ \left( \tw(\fm_{-1})^* / \mathfrak{J}_2 \right) 
  \,\ot \, 
  \left( \tw(\fm_1)^* / \bar{\mathfrak{J}}_2 \right) \,.
\end{equation}
Utilizing \eqref{E:mellconj}, it follows that $\tw(\fm_1)^* / \bar{\mathfrak{J}}_2 = ( \tw(\fm_{-1})^* / \mathfrak{J}_2)^*$, so that \eqref{E:icc2} yields 
\begin{equation} \label{E:icc3}
  \Big( \tw\tAnn(\hp)/\mathfrak{I} 
  \Big)^{\fh_\varphi} \ot_\bR \bC \ = \ 
  \Big( \left( \tw(\fm_{-1})^* / \mathfrak{J}_2 \right) 
  \,\ot \, 
  \left( \tw(\fm_{-1})^* / \mathfrak{J}_2 \right)^*
  \Big)^{\fm_0} \, .
\end{equation}
By definition of $\fg_{-1}$, the grading element $\ttT_\varphi$ acts on $\fm_{-1}$ with eigenvalue $-1$, and therefore on $(\fm_{-1})^*$ with eigenvalue $1$.  Therefore, $\ttT_\varphi$ acts on $\tw^\ell(\fm_{-1})^* / \mathfrak{J}^\ell_2$ by the eigenvalue $\ell$, and on its dual by $-\ell$.  Since $\ttT_\varphi \in\fm_0$, this forces 
$$
 \Big( \left( \tw(\fm_{-1})^* / \mathfrak{J}_2 \right) 
  \,\ot \, 
  \left( \tw(\fm_{-1})^* / \mathfrak{J}_2 \right)^*
  \Big)^{\fm_0} \ = \ 
  \bigoplus_\ell 
  \Big( \left( \tw^\ell(\fm_{-1})^* / \mathfrak{J}^\ell_2 \right) 
  \,\ot \, 
  \left( \tw^\ell(\fm_{-1})^* / \mathfrak{J}^\ell_2 \right)^*
  \Big)^{\fm_0} \,.
$$
This, taken with \eqref{E:icc3}, implies $(\tw^{2\ell+1}\tAnn(\hp)/\mathfrak{I}^{2\ell+1})^{\fh_\varphi}=0$.  Whence, \eqref{E:icc1} yields $H^{2\ell+1}_\sI(D)^{G_\bR} = 0$; the ICC vanishes in odd degree.  Moreover,
\begin{equation} \label{E:icc4}
  H^{2\ell}_\sI(D,\bC)^{G_\bR} \ = \  
  \Big( \left( \tw^\ell(\fm_{-1})^* / \mathfrak{J}^\ell_2 \right) 
  \,\ot \, 
  \left( \tw^\ell(\fm_{-1})^* / \mathfrak{J}^\ell_2 \right)^*
  \Big)^{\fm_0}\,.
\end{equation}

I claim that $\tw^\ell(\fm_{-1})^* / \mathfrak{J}^\ell_2$ is a subspace of $H^\ell(\fm_-)$.  To see this, recall that $H^\ell(\fm_-)$ is an $\fm_0$--module.  Therefore, $H^\ell(\fm_-)$ decomposes into a direct sum of eigenspaces of $\ttT_\varphi \in\fm_0$.  Let $H^\ell_m \subset H^\ell(\fm_-)$ denote the eigenspace for the  eigenvalue $m$.  Then
\begin{equation} \label{E:evH}
  H^\ell(\fm_-) \ = \ \bigoplus_{m\ge \ell} H^\ell_m \,,
\end{equation}
and
\begin{equation} \nonumber 
  H^\ell_\ell \ = \ 
  \frac{ \tker\{ \d \,:\, \tw^\ell(\fm_{-1})^* \ \to \ 0 \} }
  {\tim\{ \d \,:\, \fm_{-2}\ot\tw^{\ell-2}(\fm_{-1})^* 
                 \ \to \ \tw^\ell (\fm_{-1})^* \} } \,.
\end{equation}
The right-hand side is precisely $\tw^\ell(\fm_{-1})^* / \mathfrak{J}^\ell_2$, by the definition of $\mathfrak{J}$.  So, by \eqref{E:icc4}, 
\begin{equation} \label{E:icc5}
  H^{2\ell}_\sI(D,\bC)^{G_\bR} \ = \ 
  \big( H^\ell_\ell \,\ot \, ( H^\ell_\ell )^* \big)^{\fm_0} \, .
\end{equation}
Since $H_{\varrho_w}$ is an irreducible $\fm_0$--module, the grading element $\ttT_\varphi$ acts on $H_{\varrho_w}$ by the scalar $\varrho_w(\ttT_\varphi)$.\footnote{As an element of the center $\fz\subset\fg_0$, $\ttT_\varphi$ necessarily acts on an irreducible $\fg_0$--module by a scalar.}  In particular, \eqref{E:Wint}, \eqref{E:WpJell} and \eqref{E:Hell} yield
\begin{equation} \label{E:icc6}
  H^\ell_\ell \ = \ \bigoplus_{w \in W^\varphi_\sI(\ell)} H_{\varrho_w} \,.
\end{equation}
Together, \eqref{E:Hw}, \eqref{E:icc5} and \eqref{E:icc6} yield 
$$
  H^{2\ell}_\sI(D,\bC)^{G_\bR} \ = \
  \bigoplus_{w \in W^\varphi_\sI(\ell)} \mathscr{H}_{w} \,.
$$
The theorem now follows from \eqref{E:Hwxw}. \hfill\qed

\appendix
\section{Examples} \label{A:egs}

\subsection{Special linear Hodge groups \boldmath $G = A_r$ \unboldmath} \label{A:egA}

Throughout we fix a complete flag $\bC^\sbullet = \{ 0 \subset \bC^1 \subset \bC^2 \subset \cdots \subset \bC^{r+1} \}$.

\begin{example*} 
Consider $\fm_\bC = \fsl_6\bC$ and $\ttT_\varphi = \ttT_2+\ttT_4$ (equivalently, $I = \{2,4\}$).  Then $\check D$ may be identified with the partial flag variety 
$$
  \tFlag_{(2,4)}\bC^6 \ = \ \{ (E^2,E^4) \in \tGr(2,6) 
  \times \tGr(4,6) \ | \ E^2 \subset E^4 \} \,.
$$
The underlying real form $\fm_\bR$ of Proposition \ref{P:HSgr} is $\fsu(2,4)$.  To see this, note that the noncompact simple roots are $\Sigma(\fm_\mathrm{odd}) = \{ \s_2 \,,\, \s_4 \}$.  The Weyl group element $w = (2312)$ maps 
$$
  w\s_1 \, = \, \s_3 \,,\ \ 
  w\s_2 \, = \, -(\s_1+\s_2+\s_3) \,,\ \
  w\s_3 \, = \, \s_1 \,,\ \
  w\s_4 \, = \, \s_2+\s_3+\s_4 \,,\ \
  w\s_5 \, = \, \s_5 \, .
$$
That is, $w\Sigma$ is a simple system with a single noncompact root, $w\s_2$.  It now follows from the Vogan diagram classification \cite[VI]{MR1920389} that $\fm_\bR = \fsu(2,4)$.  The basis $\{\ttT^w_j\}$ dual to $\w\Sigma$ is $\ttT^w_1 = \ttT_3-\ttT_2$, $\ttT^w_2 = \ttT_4-\ttT_2$, $\ttT^w_3 = \ttT_1-\ttT_2+\ttT_4$, $\ttT^w_4 = \ttT_4$ and $\ttT^w_5 = \ttT_5$; and 
$$
  \ttT_\varphi \ = \ \ttT_2+\ttT_4 \ = \ 2\,\ttT^w_4 - \ttT^w_2 \,.
$$

We have 
$$
  \mss \ \simeq \fsl_2\bC \,\times\,\fsl_2\bC \,\times\,\fsl_2\bC \, ,
$$
and $\Delta(\fm_1) = \{ \a\in\Delta \ | \ \a(\ttT_2+\ttT_4)=1 \}$.  The maximal Schubert integrals are indexed by $W^\varphi_{\sI,\tmax} = \{ (2312) \,,\, (4521) \,,\, (4534)\}$, cf. Section \ref{S:SVHS}; the corresponding root sets are 
\begin{eqnarray*}
  \Delta(4534) & = & \{ \s_j + \cdots + \s_k \ | \ 3 \le j \le 4 \le k \} 
  \ = \ \{ \a\in\Delta(\fm_1) \ | \ \a(\ttT_2) = 0 \} \,,\\
  \Delta(2312) & = & \{ \s_j + \cdots + \s_k \ | \ j \le 2 \le k \le 3 \} 
  \ = \ \{ \a\in\Delta(\fm_1) \ | \ \a(\ttT_4)=0 \} \,,\\
  \Delta(4521) & = & \{ \s_2 \,,\, \s_1+\s_2 \}\,\cup\,\{ \s_4 \,,\, \s_4+\s_5 \} 
  \ = \ \{ \a\in\Delta(\fm_1) \ | \ \a(\ttT_3)=0 \} \,.
\end{eqnarray*}
The three maximal Schubert VHS, each of dimension four, are 
\begin{eqnarray*}
  X_{(4534)} & = & \{ E^\sbullet \ | \ E^2 = \bC^2 \} 
  \ = \ \tGr( 2 , \bC^6/\bC^2) \ \simeq \ \tGr(2,\bC^4) \,,\\
  X_{(2312)} & = & \{ E^\sbullet \ | \ E^4 = \bC^4 \} 
  \ = \ \tGr( 2 , 4)\,,\\
  X_{(4521)} & = & \{ E^\sbullet \ | \ E^2 \subset \bC^3 \subset E^4 \} 
  \ = \ \bP(\bC^3)^* \times \bP(\bC^6/\bC^3)  \ \simeq \ \bP^2 \times \bP^2 \, .
\end{eqnarray*}
\end{example*}

\begin{example*} 
Consider $\fm_\bC = \fsl_9\bC$ and $\ttT_\varphi = \ttT_2+\ttT_4+\ttT_7$ (equivalently, $I = \{2,4,7\}$).  If $w = (456345)$, then $w\Sigma$ is a simple system with a single noncompact root, $w \s_5$.  By the Vogan diagram classification \cite[VI]{MR1920389} of real, semisimple Lie algebras, the underlying real form $\fm_\bR$ of Proposition \ref{P:HSgr} is $\fm_\bR = \fsu(5,4)$.

The compact dual $\check D$ may be identified with the partial flag variety 
$$
  \tFlag_{(2,4,7)}\bC^9 \ = \ \{ (E^2,E^4,E^7) \in \tGr(2,9) 
  \times \tGr(4,9) \times \tGr(7,9) \ | \ E^2 \subset E^4\subset E^7 \} \,.
$$
The maximal Schubert VHS are 
$$
\renewcommand{\arraystretch}{1.3} \begin{array}{c|c|c}
 w & X_w & \Delta(w) \\ \hline
 (45621) & \bP^2 \times \bP^3 & \a(\ttT_3+\ttT_7)=0 \\
 (784521) & \bP^2 \times \bP^2 \times \bP^2 & \a(\ttT_3 + \ttT_6)=0 \\
 (456345) & \tGr(2,\bC^5) & \a(\ttT_2+\ttT_7) = 0 \\
 (784534) & \tGr(2,\bC^4) \times \bP^2 & \a(\ttT_2+\ttT_6)=0 \\
 (786743) & \bP^2 \times \tGr(2,\bC^4) & \a(\ttT_2+\ttT_5)=0 \\
 (7867421)& \bP^2 \times \bP^1 \times \tGr(2,\bC^4) & \a(\ttT_3+\ttT_5)=0 \\
 (7867562312) & \tGr(2,\bC^4) \times \tGr(3,\bC^5) & \a(\ttT_4)=0
\end{array}
$$
The second column describes the maximal integral $X_w$ as a homogeneously embedded Hermitian symmetric space; the third column gives the additional condition necessary to distinguish $\Delta(w)$ as a subset of $\Delta(\fm_1) = \{ \a\in\Delta \ | \ \a(\ttT_2+\ttT_4+\ttT_7)=1 \}$.  
\end{example*}

\subsection{Symplectic Hodge groups \boldmath $G = C_r$ \unboldmath} \label{A:egC}

Fix a nondegenerate, skew-symmetric bilinear form $\bvsigma$ on $\bC^{2r}$.  Throughout, 
$$
  \tLG(d,2r)  \ = \ \{ E \in \tGr(d,{2r}) \ | \ 
  \bvsigma_{|E} = 0\}
$$ 
will denote the Lagrangian grassmannian of $\varsigma$--isotropic $d$--planes $E \subset \bC^{2r}$.

\begin{example} \label{eg:C5P25}
This is a counter-example to \cite[Theorem 1.1.1(a)]{MR1624194}.  

Let $\fm_\bC = \fsp_{10}\bC$ and $\ttT_\varphi = \ttT_2+\ttT_5$.  The underlying real form of Proposition \ref{P:HSgr} is $\fm_\bR = \fsp(5,\bR)$.  To see this, let $w = (543545) \in W$.  Then 
$$
  w\s_1 = \s_1 \,,\ w \s_2 = \s_2+\cdots+\s_5 \,,\ 
  w\s_3 = \s_4 \,,\ w\s_4 = \s_3 \,,\ w\s_5 = -2(\s_3+\s_4)-\s_5 \, .
$$
In particular, $w\Sigma$ is a simple system with $w\s_5(\ttT_\varphi) = -1$ odd, and all other $w\s_j(\ttT_\varphi)$ even; that is, $w\s_5$ is the unique noncompact root of $w \Sigma$.  The claim now follows from the Vogan diagram classification of real Lie algebras \cite[VI]{MR1920389}.

Let $U_{\w_1} = \bC^{10}$ be the irreducible $\fm_\bC$--module of highest weight $\w_1$.  Then $U_{\w_1}$ is self-dual.  Moreover, $\ttH_\mathrm{cpt} = 2(\ttT_1+\ttT_3+\ttT_4)$ and $\w_1 = \s_1+\cdots+\s_4+\half\s_5$.  So $\w_1(\ttH_\mathrm{cpt}) = 6$ is even.  It follows from Remark \ref{R:RCQ} that $U_{\w_1}$ is real.  Set $V_\bC = U_{\w_1}$.  The Hodge decomposition \eqref{E:Vdecomp} is $V_\bC = V_{3/2} \op V_{1/2} \op V_{-1/2} \op V_{-3/2}$.  As $\mss = \fsl_2\bC \op \fsl_3\bC$--modules,
$$
  V_{3/2} \ \simeq \ \bC^2 \ot \bC \,,\quad
  V_{1/2} \ \simeq \ \bC \ot \tw^2\bC^3
$$
and $V_{-p} \simeq V_p^*$.  In particular, the Hodge numbers are $\bh = (2,3,3,2)$.  The Hodge domain $D = \tSp(5,\bR)/\Hp$ is the period domain for these Hodge numbers.  Note that, this example satisfies the hypotheses of \cite[Theorem 1.1.1(a)]{MR1624194}.

The maximal Schubert varieties are $$
\renewcommand{\arraystretch}{1.3} \begin{array}{c|c|c}
 w & X_w & \Delta(w) \\ \hline
 (52312) & \tGr(2,\bC^4) \times \bP^1 & \a(\ttT_4)=0 \\
 (54521) & \bP^2 \times \tLG(2,\bC^4) & \a(\ttT_3) = 0 \\
 (545345) & \tLG(3,\bC^6) & \a(\ttT_2)=0 \\
 (234123) & \tGr(2,5) & \a(\ttT_5)=0 \\
\end{array}
$$
The second column describes the maximal integral $X_w$ as a homogeneously embedded Hermitian symmetric space; the third column gives the additional condition necessary to distinguish $\Delta(w)$ as a subset of $\Delta(\fm_1) = \{ \a\in\Delta \ | \ \a(\ttT_2+\ttT_5)=1 \}$.  From \eqref{E:dimXw} (or a direct dimension count), we see that the maximal Schubert VHS are of dimensions five and six; the two of dimension six are indexed by $w_1 = (545345)$ and $w_2 = (234123)$.  Corollary \ref{C:intelem} assures us that every IVHS is of dimension at most six.  By Lemma \ref{L:hom}(b), both $w_1 X_{w_1}$ and $w_2 X_{w_2}$ are integral varieties.  Moreover, from \eqref{E:wXw}, we see that $o$ is a smooth point of both $w_1 X_{w_1}$ and $w_2 X_{w_2}$.  Therefore, there exist two distinct integrals of maximal dimension through $o \in D$, contradicting \cite[Theorem 1.1.1(a)]{MR1624194}.  (Indeed, this example is also a counter-example to \cite[Theorems 5.1.1 \& 5.3.1(a)]{MR1624194}.)

As noted in Theorem \ref{T:intelem}, the line $\hat\fn_w \subset \tw^{|w|}\fm_{-1}$ is a $G_0$--highest weight line of weight $-\varrho_w$.  For the two Schubert varieties above, $-\varrho_{w_1} = -4\w_2 + 4\w_5$ and $-\varrho_{w_2} = 5\w_2-2\w_5$.  In particular, both $\bI_{w_1}$ and $\bI_{w_2}$ are trivial $\mss$--modules; equivalently, $\bI_{w_1} = \hat\fn_{w_1}$ and $\bI_{w_2} = \hat\fn_{w_2}$.  Therefore, $\bI_6 = \hat\fn_{w_1} \op \hat\fn_{w_2}$.
\end{example}

\begin{example*} 
If $\fm_\bC = \fsp_{10}\bC$ and $\ttT_\varphi = \ttT_2$, then $\check D \simeq \tLG(2,\bC^{10})$.  The unique maximal element of $W^\varphi_\sI$ is $(234123)$, for which $\Delta(234123) = \{ \a\in\Delta^+ \ | \ \a(\ttT_2)=1 \,,\ \a(\ttT_5) = 0 \}$.  The unique maximal Schubert VHS is $X_{(234123)} = \tGr(2,\bC^5)$.
\end{example*}

\subsection{Spin Hodge groups \boldmath $G = B_r$ and $D_r$ \unboldmath} \label{A:egBD}

Fix a nondegenerate, symmetric bilinear form $\bq$ on $\bC^m$, for $m=2r,2r+1$.  Throughout, 
$$
  \tGr_\bq(d,m)  \ = \ \{ E \in \tGr(d,m) \ | \ 
  \bq_{|E} = 0\}
$$ 
will denote the $\bq$--isotropic $d$--planes $E \subset \bC^{m}$.  In particular, $\tGr_\bq(1,m)$ is the smooth quadric hypersurface
$$
  \cQ^{m-2} \ \subset \ \bP^{m-1} \,.
$$
We also fix a complete isotropic flag $\bC^\sbullet = \{ 0 \subset \bC^1 \subset \cdots \subset \bC^m\}$; that is, $$\bq(\bC^a , \bC^{m-a}) \ = \ 0\,.$$

\begin{example*} 
Consider $\fm_\bC = \fso_{11}\bC = \fb_5$ and $\ttT_\varphi = \ttT_3$ (equivalently, $I = \{3\}$).  Then $\check D\simeq\tGr_\bq(3,{11})$.  By the Vogan diagram classification \cite[VI]{MR1920389} of real, semisimple Lie algebras, the underlying real form $\fm_\bR$ of Proposition \ref{P:HSgr} is $\fso(6,5)$.  We have $\Delta(\fm_1) = \{ \a\in\Delta \ | \ \a(\ttT_3)=1 \}$.  The maximal Schubert VHS are given by:
\begin{a_list_nem}
\item  $w = (34543)$ with
  $\Delta(w) =  \{ \a \in \Delta(\fm_1) \ | \ \a(\ttT_2)=0  \}$ and 
  $$X_w \ = \ \{ E \in \tGr_\bq(3,{11}) \ | \ 
    \bC^2 \subset E \subset \bC^9 \}
    \ \simeq \ \cQ^5 \subset \bP^6 \,.$$
\item $w = (345421)$ with
  $\Delta(w) = \{ \a\in\Delta(\fm_1) \ | \ \a(\ttT_2+\ttT_4) \le 1 \}$ and 
  \begin{eqnarray*}
    X_w & = & \{ E \in \tGr_\bq(3,{11}) \ | \ \tdim\,(E\cap\bC^3) \ge 2 
    \,,\ E \subset \bC^7 \} \,,\\
    \tSing\,X_w & = & \{\bC^3\} \,, \hbox{(the point $o\in\check D$).}
  \end{eqnarray*}
\item $w = (3452312)$ with 
  $\Delta(w) = \{ \a\in\Delta(\fm_1) \ | \ \a(\ttT_2+\ttT_5) \le1 \}$ and 
  \begin{eqnarray*}
    X_w & = & \{ E \in \tGr_\bq(3,{11}) \ | \ \tdim\,(E\cap\bC^4) \ge 2
    \,,\ E \subset\bC^6\} \,,\\
    \tSing\,X_w & = & \{ E \in \tGr_\bq(3,\bC^{11}) \ | \ E \subset\bC^4\} 
    \ \simeq \ \bP^3 \,.
  \end{eqnarray*}
\end{a_list_nem}
\end{example*}

\begin{example} \label{eg:B6P35}
The following is a counter-example to \cite[Theorem 1.1.1(b)]{MR1624194}.  

Let $\fm_\bC = \fso_{13}=\fb_6$ and $\ttT_\varphi = \ttT_3+\ttT_5$.   The compact dual $\check D$ is the partial flag $\tFlag_{3,5}^\bq(\bC^{13})$ of $\bq$--isotropic planes.  The underlying real form of Proposition \ref{P:HSgr} is $\fm_\bR = \fso(4,9)$.  To see this, let $w = (342312) \in W$.  Then
$$\renewcommand{\arraystretch}{1.3}\begin{array}{lll}
  w\s_1 = \s_4\,,\ & w\s_2 = -(\s_1 + \cdots + \s_4)\,,\ &
  w\s_3 = \s_1\,,\\
  w\s_4 = \s_2\,,\ & w\s_5 = \s_3 + \s_4 + \s_5\,,\ &
  w\s_6 = \s_6 \, .
\end{array}$$
Note that $w\s_2(\ttT_\varphi) = -1$, and all other $w\s_j(\ttT_\varphi)$ are even.  That is, $w\s_2$ is the unique noncompact root of the simple system $w\Sigma$.  It follows from the Vogan diagram classification \cite[VI]{MR1920389} that $\fm_\bR = \fso(4,9)$.

Let $U_{\w_1} = \bC^{13}$ be the irreducible $\fm_\bC$--module of highest weight $\w_1 = \s_1+\cdots+\s_6$.  Then $U_{\w_1}$ is self-dual.  Additionally, $\ttH_\mathrm{cpt} = 2(\ttT_1+\ttT_2+\ttT_4+\ttT_6)$ and $\w_1$.  So $\w_1(\ttH_\mathrm{cpt}) = 8$ is even, and Remark \ref{R:RCQ} implies that $U_{\w_1}$ is real.  Set $V_\bC = U_{\w_1}$.  The decomposition of \eqref{E:Vdecomp} is $V_2\op V_1 \op V_0 \op V_{-1} \op V_{-2}$.  As $\mss = \fsl_3\bC\op\fsl_2\bC\op\fsl_2\bC$--modules,
$$
  V_2 \ \simeq \ (\tw^2\bC^3)\ot\bC\ot\bC \,,\quad
  V_1 \ \simeq \ \bC\ot\bC^2\ot\bC \,,\quad
  V_0 \ \simeq \ \bC\ot\bC\ot\tSym^2\bC \,,
$$
and $V_{-p} \simeq V_p^*$.  In particular, the Hodge numbers are $\bh = (3,2,3,2,3)$.  The Hodge domain $D = \tSO(4,9)/H_\varphi$ is the period domain for these Hodge numbers.  

The maximal Schubert varieties are 
\begin{a_list_nem}
\item $w = (564)$ with 
  $\Delta(w) = \{ \a\in\Delta(\fm_1) \ | \ \a(\ttT_3)=0\,,\ 
    \a(\ttT_4+\ttT_6)\le 1 \}$ and 
  $$X_w \ = \ \{ (E^3\subset E^5) \in \check D \ | \ 
    E^3 = \bC^3 \,,\ \tdim(E^5\cap\bC^5) \ge 4 \,,\ E^5 \subset \bC^7 \}\,.$$
  Note that $\tSing\,X_w = \{ \bC^3 \subset \bC^5 \} = o \in \check D$.
\item $w = (565321)$ with
  $\Delta(w) = \{ \a\in\Delta(\fm_1) \ | \ \a(\ttT_4)=0 \}$, and 
  $X_w = \bP^3\times\cQ^3$ is a homogeneously embedded Hermitian symmetric space.
\item $w = (342312)$ with 
  $\Delta(w) = \{\a\in\Delta(\fm_1) \ | \ \a(\ttT_5)=0 \}$, and 
  $X_w = \tGr(3,5)$ is a homogeneously embedded Hermitian symmetric space.
\end{a_list_nem}
From \eqref{E:dimXw} (or a direct dimension count), we see that the maximal Schubert VHS are of dimensions three and six; the two of dimension six are indexed by $w_1 = (565321)$ and $w_2 = (342312)$.  Corollary \ref{C:intelem} assures us that every IVHS is of dimension at most six.  From \cite[\S4.3]{MR1624194}, we have $q_1^\mathit{even} = h^{3,1}\,h^{4,0} = 2\cdot 3 = 6$; thus, the hypotheses of \cite[Theorem 1.1.1(b)]{MR1624194} are satisfied.  By Lemma \ref{L:hom}(b), both $w_1 X_{w_1}$ and $w_2 X_{w_2}$ are integral varieties.  Moreover, from \eqref{E:wXw}, we see that $o$ is a smooth point of both $w_1 X_{w_1}$ and $w_2 X_{w_2}$.  Therefore, there exist two distinct integrals of maximal dimension through $o \in D$, contradicting \cite[Theorem 1.1.1(b)]{MR1624194}.  (Indeed, this example is also a counter-example to \cite[Theorems 5.1.2(a) \& 5.3.1(b)]{MR1624194}.)

As noted in Theorem \ref{T:intelem}, the line $\hat\fn_w \subset \tw^{|w|}\fm_{-1}$ is a $G_0$--highest weight line of weight $-\varrho_w$.  For the two Schubert varieties above, $-\varrho_{w_1} = -4\w_3+6\w_4-3\w_5$ and $-\varrho_{w_2} = -5\w_3+3\w_5$.  In particular, as $\mss$--modules, $\bI_{w_1}\simeq\bC\ot(\tSym^6\bC^2)\ot\bC$ and $\bI_{w_2}$ is trivial.  In particular, the $G_0$--orbit of $\fn_{w_1} \in \bP\bI_{w_1}$ is naturally identified with the Veronese embedding $v_6(\bP^1)$, while $\bP\bI_{w_2} = \{\fn_{w_2}\}$.  The orbit $v_6(\bP^1)$ indexes the set $\{ g w_1 X_{w_1} \ | \ g \in G_0 \}$; each variety $g w_1 X_{w_1} = \overline{(g N_{w_1} g^{-1})\cdot o}$ is a distinct algebraic VHS, of maximal dimension, containing $o \in D$ as a smooth point.
\end{example}

\begin{example*} 
Consider $\fm_\bC = \fso_{10} = \fd_5$ and $\ttT_\varphi = \ttT_2$.  The compact dual $\check D = D_5/P_2$ may be identified with the $\bq$--isotropic 2--planes $\tGr_\bq(2,\bC^{10}) = \{ E \in \tGr(2,\bC^{10}) \ | \ \bq_{|E}=0\}$.  We have $\Delta(\fm_1) = \{ \a\in\Delta \ | \ \a(\ttT_2)=1 \}$.  The maximal Schubert VHS are given by:
\begin{a_list_nem}
\item $w = (235432)$ with 
  $\Delta(w) = \{ \a \in\Delta(\fm_1) \ | \ \a(\ttT_1)=0 \}$ and 
  $$X_w \ = \ \{ E \in \tGr_\bq(2,{10}) \ | \ \bC^1 \subset E  \subset \bC^8 \} 
  \ \simeq \ \cQ^6 \,.$$
\item $w = (235123)$ with 
  $\Delta(w) = \{ \a \in\Delta(\fm_1) \ | \ \a(\ttT_5)=0 \}$ and 
  $$X_w \ = \ \{ E \in \tGr_\bq(2,{10}) \ | \ E \subset \bC^5 \} 
    \ \simeq \ \tGr(2,\bC^5) \,.$$ 
\item $w = (234123)$ with 
  $\Delta(w) = \{ \a \in\Delta(\fm_1) \ | \ \a(\ttT_4)=0 \}$ and 
  $$X_w \ = \ \{ E \in \tGr_\bq(2,{10}) \ | \ E  \subset \tilde\bC^5 \} 
  \ \simeq \ \tGr(2,\bC^5) \,.$$
  Here, if $\{e_1,\ldots,e_{10}\}$ is a basis of $\bC^{10}$ adapted to 
  the filtration $\bC^\sbullet$, then 
  $\tilde\bC^5 = \tspan_\bC\{ e_1,\ldots,e_4,e_6\}$.
\item $w = (235431)$ with 
  $\Delta(w) = \{ \a \in\Delta(\fm_1) \ | \ \a(\ttT_1+\ttT_3)\le 1 \}$ and 
  \begin{eqnarray*}
    X_w & = & \{ E \in \tGr_\bq(2,{10}) \ | \ \tdim\,(E\cap\bC^2) \ge 1 \,,\ 
    E \subset \bC^{7} \} \,,\\
    \tSing\, X_w & = & \{ \bC^2 \} \ = \ o \ \in \ \check D \, .
  \end{eqnarray*}
\item $w = (235412)$ with 
  $\Delta(w) = \{ \a \in\Delta(\fm_1) \ | \ \a(\ttT_1+\ttT_4+\ttT_5) \le 1 \}$ and 
  \begin{eqnarray*}
    X_w & = & \{ E \in \tGr_\bq(2,{10}) \ | \ \tdim\,(E\cap\bC^3) \ge 1 \,, 
    \ E \subset \bC^6 \} \,,\\
    \tSing\,X_w & = & \{ E \subset \bC^3 \} \ \simeq \ \bP^2 \, .
  \end{eqnarray*}
\end{a_list_nem}
\end{example*}





\subsection{\boldmath The exceptional Hodge group $G = E_6$ \unboldmath} \label{A:egE}

\begin{example*}
In the case $\fm_\bC = \fe_6$ and $\ttT_\varphi = \ttT_2$, we have $\tdim_\bC D = 21$.  There are six maximal Schubert VHS, and they are each of dimension ten.  The maximal Schubert VHS are given by 
$$
\renewcommand{\arraystretch}{1.3} \begin{array}{r|c|c}
 \multicolumn{1}{c|}{w} & \Delta(w) & X_w \\ \hline
 (2456345243) & \a(\ttT_1) = 0 & \cS_5 \\
 (2456345134) & \a(\ttT_4) \le 1 &  \\
 (2456345241) & \a(\ttT_1+\ttT_5)\le1 &  \\
 (2456345132) & \a(\ttT_1+\ttT_4+\ttT_6)\le2 &  \\
 (2456341324) & \a(\ttT_3+\ttT_6)\le 1 &  \\
 (2453413245) & \a(\ttT_6) = 0 & \cS_5 
\end{array}
$$
The second column gives the additional condition necessary to define $\Delta(w)$ as a subset of $\Delta(\fm_1) = \{\a\in\Delta \ | \ \a(\ttT_2)=1\}$; the third column describes the integrals that are homogeneously embedded Hermitian symmetric spaces.  Above, $\cS_5$ denotes the \emph{Spinor variety} 
$$
  \cS_5 \ \dfn \ D_5/P_5 \ = \  \tSpin({10},\bC)/P_5 \,.
$$
\end{example*}

\begin{example*}
In the case $\fm_\bC = \fe_6$ and $\ttT_\varphi = \ttT_3$, we have $\tdim_\bC D = 25$. The maximal Schubert VHS are given by 
$$
\renewcommand{\arraystretch}{1.3} \begin{array}{r|c|c}
 \multicolumn{1}{c|}{w} & \Delta(w) & X_w \\ \hline
 (341324) & \a(\ttT_5)=0 & \tGr(2,\bC^5) \\
 (3456132) & \a(\ttT_1+\ttT_2+\ttT_5)\le1 & \\
 (34562451) & \a(\ttT_1+\ttT_4)\le1 & \\
 (34561345) & \a(\ttT_2)=0 & \tGr(2,\bC^6) \\
 (3456245342) & \a(\ttT_1)=0 & \cS_5 = D_5/P_5
\end{array}
$$
The second column gives the additional condition necessary to define $\Delta(w)$ as a subset of $\Delta(\fm_1) = \{\a\in\Delta \ | \ \a(\ttT_3)=1\}$; the third column describes the integrals that are homogeneously embedded Hermitian symmetric spaces.
\end{example*}

\begin{example*}
In the case $\fm_\bC = \fe_6$ and $\ttT_\varphi = \ttT_4$, we have $\tdim_\bC D = 29$. The maximal Schubert VHS are given by 
$$
\renewcommand{\arraystretch}{1.3} \begin{array}{r|c|c}
 \multicolumn{1}{c|}{w} & \Delta(w) & X_w \\ \hline
 (432413) & \a(\ttT_5)=0 & \tGr(2,\bC^5) \\
 (456321) & \a(\ttT_2+\ttT_3+\ttT_5)\le1 &  \\
 (456245) & \a(\ttT_3)=0 & \tGr(2,\bC^5) \\
 (456345134) & \a(\ttT_2)=0 & \tGr(3,\bC^6) 
\end{array}
$$
The second column gives the additional condition necessary to define $\Delta(w)$ as a subset of $\Delta(\fm_1) = \{\a\in\Delta \ | \ \a(\ttT_4)=1\}$; the third column describes the integrals that are homogeneously embedded Hermitian symmetric spaces.
\end{example*}

\begin{example*}
In the case $\fm_\bC = \fe_6$ and $\ttT_\varphi = \ttT_5$, we have $\tdim_\bC D = 25$. The maximal Schubert VHS are given by 
$$
\renewcommand{\arraystretch}{1.3} \begin{array}{r|c|c}
 \multicolumn{1}{c|}{w} & \Delta(w) & X_w \\ \hline
  (564524) & \a(\ttT_3)=0 & \tGr(2,\bC^5) \\
  (5645321) & \a(\ttT_2+\ttT_3+\ttT_6)\le1 &  \\
  (56432413) & \a(\ttT_4+\ttT_6)\le1 &  \\
  (56453413) & \a(\ttT_2)=0 & \tGr(2,\bC^6) \\
  (5432451342) & \a(\ttT_6)=0 &  \cS_5
\end{array}
$$
The second column gives the additional condition necessary to define $\Delta(w)$ as a subset of $\Delta(\fm_1) = \{\a\in\Delta \ | \ \a(\ttT_5)=1\}$; the third column describes the integrals that are homogeneously embedded Hermitian symmetric spaces.
\end{example*}

\begin{example*}
Consider the exceptional $\fm_\bC = \fe_6$ and $\ttT_\varphi = \ttT_2 + \ttT_5$, we have $\tdim_\bC D = 25$.  The underlying real form $\fm_\bR$ of Proposition \ref{P:HSgr} satisfies $\fk_\bR = \fso(10) \op \bR$.  To see this, note that the noncompact simple roots are $\Sigma(\fm_\mathrm{odd}) = \{ \s_2 \,,\, \s_5 \}$.  The Weyl group element $w = (2431)$ maps 
\begin{eqnarray*}
  w\s_1 & = &  -(\s_1+\cdots+\s_4) \,,\quad 
  w\s_2 \, = \, \s_4 \,,\quad
  w\s_3 \, = \, \s_1 \,,\\
  w\s_4 & = & \s_3 \,, \quad
  w\s_5 \, = \, \s_2+\s_4+\s_5 \,,\quad
  w\s_6 \, = \, \s_6 \, .
\end{eqnarray*}
That is, $w\Sigma$ is a simple system with a single noncompact root, $w\s_1$.  The claim now follows from the Vogan diagram classification \cite[VI]{MR1920389}.  The basis $\{\ttT^w_j\}$ dual to $\w\Sigma$ is $\ttT^w_1 = \ttT_5-\ttT_2$, $\ttT^w_2 = \ttT_4-\ttT_2$, $\ttT^w_3 = \ttT_1-\ttT_2+\ttT_5$, $\ttT^w_4 = \ttT_3-\ttT_2+\ttT_5$, $\ttT^w_5 = \ttT_5$ and $\ttT^w_6 = \ttT_6$; and 
$$
  \ttT_\varphi \ = \ \ttT_2+\ttT_5 \ = \ 2\,\ttT^w_5 - \ttT^w_1 \,.
$$
The maximal Schubert VHS are given by 
$$
\renewcommand{\arraystretch}{1.3} \begin{array}{r|c|c}
 \multicolumn{1}{c|}{w} & \Delta(w) & X_w \\ \hline
 (562) & \a(\ttT_2)=0 & \bP^1\times\bP^2 \\
 (2431) & \a(\ttT_5)=0 &  \bP^4 \\
 (56453413) & \a(\ttT_4)=0 &  \tGr(2,6)
\end{array}
$$
The second column gives the additional condition necessary to define $\Delta(w)$ as a subset of $\Delta(\fm_1) = \{\a\in\Delta \ | \ \a(\ttT_2+\ttT_5)=1\}$; the third column describes the integrals that are homogeneously embedded Hermitian symmetric spaces.
\end{example*}

\subsection{\boldmath The exceptional Hodge group $G = F_4$ \unboldmath} \label{A:egF}

In the case that $\fg$ is the exceptional Lie algebra $\ff_4$, there are 15 grading elements $\ttT_\varphi$ of the form \eqref{E:TI}. Each is considered in the examples below; these examples, along with Proposition \ref{P:red}, yield

\begin{corollary*} 
Variations of Hodge structure in $\check D = \mathrm{F}_4(\bC)/P_\varphi$ are of dimension at most 7.
\end{corollary*}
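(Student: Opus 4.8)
The plan is to combine the reduction of Proposition \ref{P:red} with Corollary \ref{C:intelem} and a short root-system computation. First, by Proposition \ref{P:red} (see \eqref{E:TI}), every VHS is, up to the action of $G_\bR$, governed by a grading element of the form $\ttT_\varphi = \ttT_I$ with $\emptyset \neq I \subseteq \{1,2,3,4\}$; since $\ff_4$ has rank $4$ this leaves exactly $2^4-1 = 15$ cases. Next, Corollary \ref{C:intelem} reduces the problem to bounding the dimension of Schubert VHS: by Theorem \ref{T:SchubInt} and \eqref{E:dimXw}, the maximal dimension of a VHS equals $\max\{|w| : w \in W^\varphi_\sI\}$, and by \eqref{E:Wint} together with Remark \ref{R:Dw}(c) this is the largest cardinality of a closed subset $\Delta(w) \subseteq \Delta(\fm_1)$ whose complement $\Delta^+ \setminus \Delta(w)$ is also closed. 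Equivalently, via Lemma \ref{L:abel}, it is the dimension of a maximal abelian subalgebra of $\fm_{-1}$. The statement thus becomes the finite assertion that, across the $15$ grading elements, this maximum never exceeds $7$.

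The extremal case, where the bound $7$ is attained, is $I = \{1\}$, and I would isolate it first. Here $\ttT_1$ induces a contact grading: the only root whose $\s_1$-coefficient equals $2$ is the highest root $\tilde\a$, so $\fm_2$ is one-dimensional and $\fm_{\ge3} = 0$. Consequently the bracket $\fm_1 \times \fm_1 \to \fm_2 \simeq \bC$ is a skew form on the $14$-dimensional space $\fm_1$, and it is nondegenerate because the $14$ roots of $\Delta(\fm_1)$ pair off into the seven pairs $\{\a,\,\tilde\a-\a\}$ summing to $\tilde\a$. Abelian subalgebras of $\fm_{-1}$ are then exactly the isotropic subspaces, so the maximal one has dimension $\tfrac{1}{2}\tdim\,\fm_1 = 7$; this $7$-dimensional VHS is realized by the corresponding Schubert variety $X_w$ in the tables below.

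For the upper bound I would then verify, case by case, that the remaining $14$ grading elements yield no abelian subalgebra of $\fm_{-1}$ of dimension exceeding $7$. In each case $\fm_{-1}$ is the span of the root spaces $\{\fm_{-\a} : \a(\ttT_I) = 1\}$, and two such root vectors fail to commute precisely when their roots sum to a root of $\fm_2$; finding the largest abelian subalgebra is therefore the combinatorial problem of finding a maximum independent set in the resulting ``conflict graph'' on $\Delta(\fm_1)$. The cases with largest $\fm_1$ are the dangerous ones: for instance $I = \{2\}$ gives $\tdim\,\fm_1 = 12$, but the conflict graph is bipartite, with its two $6$-element parts indexed by the value ($0$ or $1$) of the $\s_1$-coefficient, so a matching argument (K\"onig's theorem) shows the maximum independent set has size $6 < 7$. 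The remaining, sparser gradings have smaller $\fm_1$ and are handled the same way; all of this is recorded in the tables of the examples that follow, from which one reads off $\max|w| = 7$.

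The main obstacle is the uniformity of the upper bound: a priori one of the other fourteen gradings (several of which also have sizeable $\fm_1$) could support an abelian subalgebra larger than the contact case. Overcoming this is exactly the content of the example-by-example computation — equivalently, of showing that the maximum-independent-set (maximal-isotropic) count stays $\le 7$ in every case — rather than any single conceptual step; the symplectic structure only pins down the extremal value $7$ and the grading $I = \{1\}$ realizing it.
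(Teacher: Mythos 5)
Your proposal is correct, and its skeleton is the paper's: Proposition \ref{P:red} reduces to the fifteen grading elements $\ttT_\varphi=\ttT_I$ with $\emptyset\neq I\subseteq\{1,\ldots,4\}$, and Theorem \ref{T:SchubInt} together with Corollary \ref{C:intelem} (equivalently Lemma \ref{L:abel}) turns the bound into the finite claim that every abelian subalgebra of $\fm_{-1}$ has dimension at most $7$ in each case; the paper then simply reads the answer off the tables of maximal Schubert VHS in Appendix \ref{A:egF}. The one place you genuinely improve on the paper is the extremal case $I=\{1\}$: since $\s_1$ is the adjoint node of $\ff_4$ (the highest root is $\tilde\a=2\s_1+3\s_2+4\s_3+2\s_4$), the grading is the contact grading, $\fm_{\pm2}$ is a line, and the bracket $\fm_{-1}\times\fm_{-1}\to\fm_{-2}$ is a nondegenerate symplectic form on the $14$--dimensional $\fm_{-1}$ because the roots of $\Delta(\fm_1)$ pair off as $\{\a,\tilde\a-\a\}$; abelian subalgebras are then exactly the isotropic subspaces, so the bound $7$ for this case is conceptual rather than tabulated, and it is sharp by the $7$--dimensional Schubert VHS $X_{(1234232)}$. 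For the remaining fourteen gradings your argument is the same case-by-case combinatorics that the paper relegates to its examples; the only point to tighten is that the K\"onig argument for $I=\{2\}$ requires exhibiting a perfect matching in the conflict graph (one exists: the six roots of $\Delta(\fm_1)$ with $\s_1$--coefficient $0$ pair with the six with coefficient $1$ so that each pair sums to a root of $\fm_2$) before one may conclude that the independence number equals $6$ rather than merely being at least $6$.
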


For the exceptional $\ff_4$, then there are two possibilities for the underlying real form $\fm_\bR = \fk_\bR \op \fq_\bR$ of Proposition \ref{P:HSgr}, cf. \cite[p.416]{MR1920389}.  The first, denoted $\FI$, has maximal compact subalgebra $\fk_\bR = \fsp(3)\op\fsu(2)$.  The simple roots may be selected so that the unique noncompact simple root is $\s_1$.  The second, denoted $\FII$, has maximal compact subalgebra $\fk_\bR = \fso(9)$.  The simple roots may be selected so that the unique noncompact simple root is $\s_4$.\footnote{The ordering on the simple roots is as in \cite{Bourbaki}, $\s_1$ is a long root and $\s_4$ is a short root.}

\begin{example*}
Consider the case $\ttT_\varphi = \ttT_1$.  We have $\tdim_\bC D = 15$.  The underlying real form $\fm_\bR$ of Proposition \ref{P:HSgr} is $\FI$.  The two maximal Schubert VHS are given by 
\begin{eqnarray*}
  w = (1234232) & \hbox{and} & 
  \Delta(w) = \{ \a\in\Delta(\fm_1) \ | \ \a(\ttT_2)\le1 \} \,;\\
  w = (1234231) & \hbox{and} & 
  \Delta(w) = \{ \a\in\Delta(\fm_1) \ | \ \a(\ttT_2+\ttT_4)\le2 \} \,.
\end{eqnarray*}
\end{example*}

\begin{example*}
Consider the case $\ttT_\varphi = \ttT_2$.  We have $\tdim_\bC D = 20$.  If $w = (2342321)$, then $w\s_1$ is the single noncompact root of $w\Sigma$.  Therefore, the underlying real form $\fm_\bR$ of Proposition \ref{P:HSgr} is $\FI$.  The two maximal Schubert VHS are given by 
\begin{eqnarray*}
  w = (2341) & \hbox{and} & 
  \Delta(w) = \{ \a\in\Delta(\fm_1) \ | \ \a(\ttT_1+\ttT_3)\le1 \} \,;\\
  w = (234232) & \hbox{and} & 
  \Delta(w) = \{ \a\in\Delta(\fm_1) \ | \ \a(\ttT_1)=0 \} \,.
\end{eqnarray*}
The maximal $X_{(234232)}$ is a homogeneously embedded $\tLG(3,\bC^6)$.
\end{example*}

\begin{example*}
Consider the case $\ttT_\varphi = \ttT_3$.  We have $\tdim_\bC D = 20$.  If $w = (34)$, then $w\s_4$ is the single noncompact root of $w\Sigma$.  Therefore, the underlying real form $\fm_\bR$ of Proposition \ref{P:HSgr} is $\FII$.  The unique maximal Schubert VHS is a homogeneously embedded $\bP^2$ given by $w = (34)$ and $\Delta(w) = \{ \a\in\Delta(\fm_1) \ | \ \a(\ttT_2)=0\}$.
\end{example*}

\begin{example*}
Consider the case $\ttT_\varphi = \ttT_4$.  We have $\tdim_\bC D = 15$.  The underlying real form $\fm_\bR$ of Proposition \ref{P:HSgr} is $\FII$.  The unique maximal Schubert VHS is a homogeneously embedded $\bP^2$ given by $w = (43)$ and $\Delta(w) = \{ \a\in\Delta(\fm_1) \ | \ \a(\ttT_2)=0\}$.
\end{example*}

\begin{example*}
Let $\ttT_\varphi = \ttT_1+\ttT_2$.  We have $\tdim_\bC D = 21$.  To see that the underlying real form $\fm_\bR$ of Proposition \ref{P:HSgr} is $\FI$, let $w = (1) \in W$.  Then $w \s_1 = -\s_1$, $w\s_2 = \s_1+\s_2$ and $w \s_j = \s_j$, $j=3,4$.   So $w\s_1$ is the unique noncompact root of $w \Sigma$.  The Vogan diagram classification \cite[p.416]{MR1920389} yields $\fm_\bR = \FI$.  The two maximal Schubert VHS are homogeneously embedded Hermitian symmetric spaces given by 
$$ \renewcommand{\arraystretch}{1.3}
\begin{array}{lll}
  w = (1) \,,\ &  
  \Delta(w) = \{ \a\in\Delta(\fm_1) \ | \ \a(\ttT_2)=0 \} \,,\ 
  & X_w = \bP^1 \,;\\
  w = (234232) \,,\ & 
  \Delta(w) = \{ \a\in\Delta(\fm_1) \ | \ \a(\ttT_1)=0 \} \,, 
  & X_w = \tLG(3,\bC^6) \,.
\end{array}$$
\end{example*}

\begin{example*}
Let $\ttT_\varphi = \ttT_1+\ttT_3$.  We have $\tdim_\bC D = 22$.  In this case the underlying real form $\fm_\bR$ of Proposition \ref{P:HSgr} is $\FI$.  To see this, let $w = (3412321) \in W$.  Then $w \s_1 = -\s_1-2\s_2-4\s_3-2\s_4$, $w\s_2 = \s_2+2\s_3$, $w \s_3 = \s_4$ and $w \s_4 = \s_1+\s_2+\s_3$.   So $w\s_1$ is the unique noncompact root of $w \Sigma$.  The Vogan diagram classification \cite[p.416]{MR1920389} yields $\fm_\bR = \FI$.  The two maximal Schubert VHS are homogeneously embedded Hermitian symmetric spaces given by 
$$ \renewcommand{\arraystretch}{1.3}
\begin{array}{lll}
  w = (12) \,,\ &  
  \Delta(w) = \{ \a\in\Delta(\fm_1) \ | \ \a(\ttT_3)=0 \} \,,\ 
  & X_w = \bP^2 \,;\\
  w = (341) \,,\ & 
  \Delta(w) = \{ \a\in\Delta(\fm_1) \ | \ \a(\ttT_2)=0 \} \,, 
  & X_w = \bP^1\times\bP^2 \,.
\end{array}$$
\end{example*}

\begin{example*}
Let $\ttT_\varphi = \ttT_1+\ttT_4$.  We have $\tdim_\bC D = 20$.  To see that the underlying real form $\fm_\bR$ of Proposition \ref{P:HSgr} is $\FI$, let $w = (12321) \in W$.  We leave it to the reader to confirm that $w\s_1$ is the unique noncompact root of $w\Sigma$; the claim then follows from the Vogan diagram classification \cite[p.416]{MR1920389}.  The three maximal Schubert VHS are homogeneously embedded Hermitian symmetric spaces given by 
$$ \renewcommand{\arraystretch}{1.3}
\begin{array}{lll}
  w = (412) \,,\ &  
  \Delta(w) = \{ \a\in\Delta(\fm_1) \ | \ \a(\ttT_3)=0 \} \,,\ 
  & X_w = \bP^2\times\bP^1 \,;\\
  w = (431) \,,\ &  
  \Delta(w) = \{ \a\in\Delta(\fm_1) \ | \ \a(\ttT_2)=0 \} \,,\ 
  & X_w = \bP^1\times\bP^2 \,;\\
  w = (12321) \,,\ & 
  \Delta(w) = \{ \a\in\Delta(\fm_1) \ | \ \a(\ttT_4)=0 \} \,, 
  & X_w = \cQ^5 \ \subset \ \bP^6 \,.
\end{array}$$
\end{example*}

\begin{example*}
Let $\ttT_\varphi = \ttT_2+\ttT_3$.  We have $\tdim_\bC D = 22$.  To see that the underlying real form $\fm_\bR$ of Proposition \ref{P:HSgr} is $\FI$, let $w = (21) \in W$.  We leave it to the reader to confirm that $w\s_1$ is the unique noncompact root of $w\Sigma$; the claim then follows from the Vogan diagram classification \cite[p.416]{MR1920389}.  The two maximal Schubert VHS are homogeneously embedded Hermitian symmetric spaces given by 
$$ \renewcommand{\arraystretch}{1.3}
\begin{array}{lll}
  w = (21) \,,\ &  
  \Delta(w) = \{ \a\in\Delta(\fm_1) \ | \ \a(\ttT_3)=0 \} \,,\ 
  & X_w = \bP^2 \,;\\
  w = (34) \,,\ & 
  \Delta(w) = \{ \a\in\Delta(\fm_1) \ | \ \a(\ttT_2)=0 \} \,, 
  & X_w = \bP^2 \,.
\end{array}$$
\end{example*}

\begin{example*}
Let $\ttT_\varphi = \ttT_2+\ttT_4$.  We have $\tdim_\bC D = 22$.  To see that the underlying real form $\fm_\bR$ of Proposition \ref{P:HSgr} is $\FI$, let $w = (4321) \in W$.  We leave it to the reader to confirm that $w\s_1$ is the unique noncompact root of $w\Sigma$; the claim then follows from the Vogan diagram classification \cite[p.416]{MR1920389}.  The four maximal Schubert VHS, three of them homogeneously embedded Hermitian symmetric spaces, are given by 
$$ \renewcommand{\arraystretch}{1.3}
\begin{array}{lll}
  w = (43) \,,\ &  
  \Delta(w) = \{ \a\in\Delta(\fm_1) \ | \ \a(\ttT_2)=0 \} \,,\ 
  & X_w = \bP^2 \,;\\
  w = (231) \,,\ &  \multicolumn{2}{l}{
  \Delta(w) = \{ \a\in\Delta(\fm_1) \ | \ 
                 \a(\ttT_1+\ttT_3) \le 1 \,,\ \a(\ttT_4)=0 \} \,,\ } \\
  w = (421) \,,\ &  
  \Delta(w) = \{ \a\in\Delta(\fm_1) \ | \ \a(\ttT_3)=0 \} \,,\ 
  & X_w = \bP^2\times\bP^1 \,;\\
  w = (232) \,,\ & 
  \Delta(w) = \{ \a\in\Delta(\fm_1) \ | \ \a(\ttT_1+\ttT_4)=0 \} \,, 
  & X_w = \cQ^3 \ \subset\ \bP^4 \,.
\end{array}$$
\end{example*}

\begin{example*}
Let $\ttT_\varphi = \ttT_3+\ttT_4$.  We have $\tdim_\bC D = 21$.  To see that the underlying real form $\fm_\bR$ of Proposition \ref{P:HSgr} is $\FII$, let $w = (4) \in W$.  We leave it to the reader to confirm that $w\s_4$ is the unique noncompact root of $w\Sigma$; the claim then follows from the Vogan diagram classification \cite[p.416]{MR1920389}.  The two maximal Schubert VHS, both homogeneously embedded $\bP^1$s, are given by 
$$ \renewcommand{\arraystretch}{1.3}
\begin{array}{ll}
  w = (3) \,,\ &  
  \Delta(w) = \{ \a\in\Delta(\fm_1) \ | \ \a(\ttT_2+\ttT_4)=0 \} \,;\\
  w = (4) \,,\ & 
  \Delta(w) = \{ \a\in\Delta(\fm_1) \ | \ \a(\ttT_3)=0 \} \,.
\end{array}$$
\end{example*}

\begin{example*}
Let $\ttT_\varphi = \ttT_1+\ttT_2+\ttT_3$.  We have $\tdim_\bC D = 23$.  To see that the underlying real form $\fm_\bR$ of Proposition \ref{P:HSgr} is $\FI$, let $w = (342321) \in W$.  We leave it to the reader to confirm that $w\s_1$ is the unique noncompact root of $w\Sigma$; the claim then follows from the Vogan diagram classification \cite[p.416]{MR1920389}.  The two maximal Schubert VHS are given by 
$$ \renewcommand{\arraystretch}{1.3}
\begin{array}{lll}
  w = (2) \,,\ &  
  \Delta(w) = \{ \a\in\Delta(\fm_1) \ | \ \a(\ttT_1+\ttT_3)=0 \} \,,\
  & X_w = \bP^1 \,;\\
  w = (341) \,,\ & 
  \Delta(w) = \{ \a\in\Delta(\fm_1) \ | \ \a(\ttT_2)=0 \} \,,\
  & X_w = \bP^1 \times\bP^2\,.
\end{array}$$
\end{example*}

\begin{example*}
Let $\ttT_\varphi = \ttT_1+\ttT_2+\ttT_4$.  We have $\tdim_\bC D = 23$.  To see that the underlying real form $\fm_\bR$ of Proposition \ref{P:HSgr} is $\FI$, let $w = (2321) \in W$.  We leave it to the reader to confirm that $w\s_1$ is the unique noncompact root of $w\Sigma$; the claim then follows from the Vogan diagram classification \cite[p.416]{MR1920389}.  The three maximal Schubert VHS are given by 
$$ \renewcommand{\arraystretch}{1.3}
\begin{array}{lll}
  w = (42) \,,\ &  
  \Delta(w) = \{ \a\in\Delta(\fm_1) \ | \ \a(\ttT_1+\ttT_3)=0 \} \,,\
  & X_w = \bP^1 \times \bP^1 \,;\\
  w = (431) \,,\ & 
  \Delta(w) = \{ \a\in\Delta(\fm_1) \ | \ \a(\ttT_2)=0 \} \,,\
  & X_w = \bP^1 \times\bP^2\,;\\
  w = (232) \,,\ & 
  \Delta(w) = \{ \a\in\Delta(\fm_1) \ | \ \a(\ttT_1+\ttT_4)=0 \} \,,\
  & X_w = \cQ^3 \ \subset \ \bP^4\,.
\end{array}$$
\end{example*}

\begin{example*}
Let $\ttT_\varphi = \ttT_1+\ttT_3+\ttT_4$.  We have $\tdim_\bC D = 23$.  To see that the underlying real form $\fm_\bR$ of Proposition \ref{P:HSgr} is $\FI$, let $w = (412321) \in W$.  We leave it to the reader to confirm that $w\s_1$ is the unique noncompact root of $w\Sigma$; the claim then follows from the Vogan diagram classification \cite[p.416]{MR1920389}.  The two maximal Schubert VHS are given by 
$$ \renewcommand{\arraystretch}{1.3}
\begin{array}{lll}
  w = (31) \,,\ & 
  \Delta(w) = \{ \a\in\Delta(\fm_1) \ | \ \a(\ttT_2+\ttT_4)=0 \} \,,\
  & X_w = \bP^1 \times\bP^1\,;\\
  w = (412) \,,\ & 
  \Delta(w) = \{ \a\in\Delta(\fm_1) \ | \ \a(\ttT_3)=0 \} \,,\
  & X_w = \bP^2\times\bP^1\,.
\end{array}$$
\end{example*}

\begin{example*}
Let $\ttT_\varphi = \ttT_2+\ttT_3+\ttT_4$.  We have $\tdim_\bC D = 23$.  To see that the underlying real form $\fm_\bR$ of Proposition \ref{P:HSgr} is $\FI$, let $w = (321) \in W$.  We leave it to the reader to confirm that $w\s_1$ is the unique noncompact root of $w\Sigma$; the claim then follows from the Vogan diagram classification \cite[p.416]{MR1920389}.  The two maximal Schubert VHS are given by 
$$ \renewcommand{\arraystretch}{1.3}
\begin{array}{lll}
  w = (3) \,,\ & 
  \Delta(w) = \{ \a\in\Delta(\fm_1) \ | \ \a(\ttT_2+\ttT_4)=0 \} \,,\
  & X_w = \bP^1\,;\\
  w = (421) \,,\ & 
  \Delta(w) = \{ \a\in\Delta(\fm_1) \ | \ \a(\ttT_3)=0 \} \,,\
  & X_w = \bP^2\times\bP^1\,.
\end{array}$$
\end{example*}

\begin{example*}
Let $\ttT_\varphi = \ttT_1+\ttT_2+\ttT_3+\ttT_4$.  We have $\tdim_\bC D = 24$.  To see that the underlying real form $\fm_\bR$ of Proposition \ref{P:HSgr} is $\FI$, let $w = (42321) \in W$.  We leave it to the reader to confirm that $w\s_1$ is the unique noncompact root of $w\Sigma$; the claim then follows from the Vogan diagram classification \cite[p.416]{MR1920389}.  The three maximal Schubert VHS, all homogeneously embedded $\bP^1\times\bP^1$s, are given by 
$$ \renewcommand{\arraystretch}{1.3}
\begin{array}{ll}
  w = (31) \,,\ & 
  \Delta(w) = \{ \a\in\Delta(\fm_1) \ | \ \a(\ttT_2+\ttT_4)=0 \} \,;\\
  w = (41) \,,\ & 
  \Delta(w) = \{ \a\in\Delta(\fm_1) \ | \ \a(\ttT_2+\ttT_3)=0 \} \,;\\
  w =(42) \,,\ & 
  \Delta(w) = \{ \a\in\Delta(\fm_1) \ | \ \a(\ttT_1+\ttT_3)=0 \} \,.
\end{array}$$
\end{example*}

\subsection{\boldmath The exceptional Hodge group $G = G_2$ \unboldmath} \label{A:egG}

In the case that $\fg$ is the exceptional Lie algebra $\fg_2$, there are 3 grading elements $\ttT_\varphi$ of the form \eqref{E:TI}. Each is considered in the examples below; these examples, along with Proposition \ref{P:red}, yield

\begin{corollary*} 
Variations of Hodge structure in $\check D = \mathrm{G}_2(\bC)/P_\varphi$ are of dimension at most 2.
\end{corollary*}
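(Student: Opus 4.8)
The plan is to reduce this statement about arbitrary variations of Hodge structure to a short root-theoretic computation. By Corollary~\ref{C:intelem}, a VHS of dimension $\ell$ exists in $\check D$ precisely when a Schubert VHS of that dimension does, so it suffices to bound $\tdim X_w = |w|$ over $w \in W^\varphi_\sI$; by \eqref{E:dimXw} and \eqref{E:|Dw|} this equals $\max\{\,|\Delta(w)| : w \in W^\varphi_\sI\,\}$. Proposition~\ref{P:red} lets me assume $\ttT_\varphi = \ttT_I$, and since $G_2$ has rank two the nonempty subsets $I \subseteq \{1,2\}$ yield exactly the three grading elements $\ttT_1,\ \ttT_2,\ \ttT_1+\ttT_2$ treated in the examples below. (If instead some coefficient of $\ttT_\varphi$ exceeds $1$, then Proposition~\ref{P:red} replaces $G_2$ by the proper subgroup $F$ generated by a subset of the simple roots; for $G_2$ this $F$ is trivial or of type $A_1$, whose compact dual $\bP^1$ supports only VHS of dimension $\le 1$, so such cases are subsumed.)

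For each grading element I would read off $\Delta(\fm_1) = \{\alpha\in\Delta^+ : \alpha(\ttT_I)=1\}$ from the six positive roots $\sigma_1,\ \sigma_2,\ \sigma_1+\sigma_2,\ 2\sigma_1+\sigma_2,\ 3\sigma_1+\sigma_2,\ 3\sigma_1+2\sigma_2$ of $G_2$ (with $\sigma_1$ the short simple root), using that $\alpha(\ttT_j)$ is the coefficient of $\sigma_j$ in $\alpha$. This gives $\Delta(\fm_1)=\{\sigma_1,\sigma_1+\sigma_2\}$ for $I=\{1\}$, the four roots $\{\sigma_2,\sigma_1+\sigma_2,2\sigma_1+\sigma_2,3\sigma_1+\sigma_2\}$ for $I=\{2\}$, and $\{\sigma_1,\sigma_2\}$ for $I=\{1,2\}$.

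The crux is bounding $|\Delta(w)|$. Since $w\in W^\varphi$, Remark~\ref{R:Dw}(c) shows $\Delta(w)$ is closed, and as $\Delta(w)\subset\Delta(\fm_1)$ consists of grade-one roots whose pairwise sums have grade two, closedness forces $\Delta(w)$ to be \emph{sum-free} (no two of its elements add up to a root). Thus $|w|$ is at most the size of the largest sum-free subset of $\Delta(\fm_1)$. For $I=\{1\}$ and $I=\{1,2\}$ the two roots of $\Delta(\fm_1)$ already sum to a root ($2\sigma_1+\sigma_2$ and $\sigma_1+\sigma_2$ respectively), so this maximum is $1$. For $I=\{2\}$ the four roots split into the pairs $\{\sigma_2,3\sigma_1+\sigma_2\}$ and $\{\sigma_1+\sigma_2,2\sigma_1+\sigma_2\}$, each summing to $3\sigma_1+2\sigma_2$; every three of them contain one such pair, so the largest sum-free subset has size $2$, e.g.\ $\{\sigma_2,\sigma_1+\sigma_2\}$. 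Hence $|w|\le 2$ in all cases, proving the corollary.

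The one point needing care is bookkeeping rather than difficulty: fixing the $G_2$ conventions consistently, and—if one also wants the bound to be sharp—verifying that $\{\sigma_2,\sigma_1+\sigma_2\}$ and its complement in $\Delta^+$ are both closed, so that by Remark~\ref{R:Dw}(c) it genuinely arises as $\Delta(w)$ for some $w\in W^\varphi_\sI$ and a two-dimensional Schubert VHS exists.
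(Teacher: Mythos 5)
Your argument is correct and is essentially the paper's: the corollary is obtained by combining Corollary \ref{C:intelem} and Proposition \ref{P:red} with a case-by-case check of the three gradings $\ttT_1$, $\ttT_2$, $\ttT_1+\ttT_2$, which the paper carries out by explicitly listing the maximal Schubert VHS in the three $\fg_2$ examples. Your ``largest sum-free subset of $\Delta(\fm_1)$'' packaging is just the abelian condition \eqref{E:abrt} (equivalently Lemma \ref{L:DwDm1}) made explicit, and your computations agree with the paper's ($\Delta(21)=\{\s_2,\s_1+\s_2\}$ realizing the bound $2$).
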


For the exceptional Lie algebra $\fg_2$, Vogan diagram classification \cite[p.416]{MR1920389} implies that the underlying real form $\fm_\bR = \fk_\bR \op \fq_\bR$ of Proposition \ref{P:HSgr} is the split real form with maximal compact subalgebra $\fk_\bR = \fsu(2) \op \fsu(2)$.  

\begin{example*}
Suppose $\fm_\bC = \fg_2$ and $\ttT_\varphi = \ttT_1$.  We have $\tdim_\bC D = 5$.  The unique maximal integral $X_{(1)}$ is a homogeneously embedded $\bP^1$ given by $\Delta(1) = \{ \a\in\Delta \ | \ \a(\ttT_1)=1\,,\ \a(\ttT_2)=0 \}$.  
\end{example*}

\begin{example*}
Suppose $\fm_\bC = \fg_2$ and $\ttT_\varphi = \ttT_2$.  We have $\tdim_\bC D = 5$.  There is a unique maximal Schubert VHS, $X_{(21)}$, with $\Delta(21) = \{ \a\in\Delta \ | \ \a(\ttT_2) = 1 \,,\ \a(\ttT_1)\le1\}$. 
\end{example*}

\begin{example*}
Suppose $\fm_\bC = \fg_2$ and $\ttT_\varphi = \ttT_1 + \ttT_2$.  We have $\tdim_\bC D = 5$.  There are two maximal Schubert VHS, $X_{(1)}$ and $X_{(2)}$, both are homogeneously embedded $\bP^1$'s given by $\Delta(1) =  \{ \a\in\Delta \ | \ \a(\ttT_1)=1\,,\ \a(\ttT_2)=0 \}$ and $\Delta(2) =  \{ \a\in\Delta \ | \ \a(\ttT_2)=1\,,\ \a(\ttT_1)=0 \}$.
\end{example*}

\section{Hodge representations of Calabi--Yau type}\label{A:CY}

\begin{definition*}
A Hodge structure $\varphi$ of weight $n$ on $V$ is of \emph{Calabi--Yau type} if $\tdim_\bC V^{m/2} = 1$ in the decomposition \eqref{E:Vdecomp}.\footnote{In the case that $\varphi$ is an effective Hodge structure of weight $n\ge0$, this is equivalent to $h^{n,0} = 1$.}  A Hodge representation, given by $\rho : G \to \tAut(V,Q)$ and $\varphi : S^1 \to G_\bR$, is of \emph{Calabi--Yau type} if the Hodge structure $\rho\circ\varphi$ on $V$ is of Calabi--Yau type.
\end{definition*}

\begin{proposition} \label{P:CY}
Let $G$ be a Hodge group.  Let $\rho : G \to \tAut(V,Q)$ and $\varphi : S^1 \to G_\bR$ define a Hodge representation.  Without loss of generality, the grading element $\ttT_\varphi = \sum n_i \ttT_i$ associated to the circle $\varphi$ by \eqref{E:dfnT} is of the form \eqref{E:Tphi}.\footnote{As noted in \eqref{E:Tpos}, the Borel subalgebra $\fb$ (equivalently, the simple roots $\Sigma \subset \Delta$) may be chosen so that the $n_i$ are non-negative.  Note that the reduction to \eqref{E:TI} is \emph{not} imposed.}  Assume $V_\bR$ is irreducible, and let $U$ be the associated irreducible, complex $G$--representation of highest weight $\m = \m^i\w_i$ (cf. Remark \ref{R:RCQ}).  Then, the Hodge representation is of Calabi--Yau type if and only if $\{ i \ | \ n_i=0\} \subset \{ i \ | \ \mu^i =0\}$, and one of the following holds:
\begin{a_list}
  \item The representation $U$ is real.  Equivalently, $\mu = \mu^*$, and $\sum_{n_i \in 2\bZ} \mu(\ttT_i)$ is an integer.
  \item The inequality $\m(\ttT_\varphi) \not= \m^*(\ttT_\varphi)$ holds.  (In this case, $U$ is necessarily complex.)
\end{a_list}
\end{proposition}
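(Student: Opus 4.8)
The plan is to reduce the statement to a dimension count for a single $\ttT_\varphi$-eigenspace and then to a routine highest-weight computation. By the definition of Calabi--Yau type together with \eqref{E:Vdecomp}, the Hodge representation is of Calabi--Yau type precisely when the top graded piece $V_{m/2}$, which is the highest $\ttT_\varphi$-eigenspace of $V_\bC$, is one-dimensional. Since $\fm_0$ centralizes $\ttT_\varphi$, each $\ttT_\varphi$-eigenspace is an $\fm_0$-module; so the task is to identify the top $\ttT_\varphi$-eigenspace of $V_\bC$ as an $\fm_0$-module and to decide when it is a line.

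First I would establish the following lemma: if $U$ is irreducible with highest weight $\mu$, then the top $\ttT_\varphi$-eigenspace $U_{\mu(\ttT_\varphi)}$ is the irreducible $\mss$-module of highest weight $\mu|_\mss$, and is therefore one-dimensional if and only if $\mu^j = 0$ for every $j$ with $n_j = 0$, i.e. $\{i : n_i=0\}\subset\{i:\mu^i=0\}$. The eigenvalue bound is immediate: every weight of $U$ has the form $\nu = \mu - \sum_j c_j \s_j$ with $c_j\in\bZ_{\ge0}$, so $\nu(\ttT_\varphi) = \mu(\ttT_\varphi) - \sum_j c_j n_j \le \mu(\ttT_\varphi)$, with equality exactly when $c_j = 0$ for all $j$ with $n_j>0$. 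Thus the highest eigenvalue is $\mu(\ttT_\varphi)$, and $U_{\mu(\ttT_\varphi)}$ is spanned by the weight spaces reachable from $v_\mu$ by the lowering operators $\{x_{-\s_j} : n_j = 0\}$ that generate the negative part of $\mss$. To see that $U_{\mu(\ttT_\varphi)}$ is $\mss$-irreducible, note that $\fm_+$ raises the $\ttT_\varphi$-grading and hence annihilates $U_{\mu(\ttT_\varphi)}$; therefore any $\mss$-highest weight vector in $U_{\mu(\ttT_\varphi)}$ is annihilated both by $\fm_+$ and by the positive root spaces of $\mss$, hence by all positive root spaces of $\fm_\bC$, so it is a highest weight vector of $U$ and lies on the line $\bC v_\mu$. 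Consequently $U_{\mu(\ttT_\varphi)} = \mss\cdot v_\mu$ is the irreducible $\mss$-module of highest weight $\mu|_\mss$, which is a line if and only if $\mu|_\mss = 0$.

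Next I would run the trichotomy of Remark \ref{R:RCQ}, using \eqref{E:wt1} and \eqref{E:wt2} to locate $V_{m/2}$. In the real case $V_\bC = U$ and $V_{m/2}=U_{\mu(\ttT_\varphi)}$, so the lemma gives Calabi--Yau type iff $\{n_i=0\}\subset\{\mu^i=0\}$; this is alternative (a), and the equivalence ``$U$ real $\iff \mu=\mu^*$ and $\sum_{n_i\in2\bZ}\mu(\ttT_i)\in\bZ$'' is just Remark \ref{R:RCQ} rewritten via $\ttH_\mathrm{cpt}=2\sum_{n_i\in2\bZ}\ttT_i$. In the complex and quaternionic cases $V_\bC = U\op U^*$, and after relabeling $U,U^*$ if necessary I may assume $\mu(\ttT_\varphi)\ge\mu^*(\ttT_\varphi)$, so that $m=2\mu(\ttT_\varphi)$ and $V_{m/2}\supseteq U_{\mu(\ttT_\varphi)}$. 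When $\mu(\ttT_\varphi)=\mu^*(\ttT_\varphi)$ --- which always occurs in the quaternionic case, where $\mu=\mu^*$ --- the top pieces of both $U$ and $U^*$ contribute, so $\dim V_{m/2}\ge 2$ and the representation is never of Calabi--Yau type; this is why quaternionic modules are excluded and why (b) demands the strict condition $\mu(\ttT_\varphi)\neq\mu^*(\ttT_\varphi)$. When $\mu(\ttT_\varphi)>\mu^*(\ttT_\varphi)$ we have $V_{m/2}=U_{\mu(\ttT_\varphi)}$, and the lemma again yields Calabi--Yau type iff $\{n_i=0\}\subset\{\mu^i=0\}$; this is alternative (b).

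Assembling the three cases gives the claimed equivalence. I expect the only genuinely delicate point to be the normalization in the complex case: the stated condition $\{n_i=0\}\subset\{\mu^i=0\}$ is attached to the particular highest weight $\mu$ whose summand $U$ carries the top Hodge piece $V_{m/2}$ (equivalently $\mu(\ttT_\varphi)=m/2$), and it is \emph{not} symmetric under $\mu\leftrightarrow\mu^*$, so I would make this choice of $\mu$ explicit at the outset. Everything else is the highest-weight bookkeeping of the lemma together with the dimension count that rules out the quaternionic and balanced-complex cases.
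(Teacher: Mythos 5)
Your proposal is correct and follows essentially the same route as the paper: reduce Calabi--Yau type to $\dim_\bC V_{m/2}=1$, prove a lemma identifying when the top $\ttT_\varphi$--eigenspace of $U$ is a line, and then run the real/quaternionic/complex trichotomy of Remark \ref{R:RCQ}, with the quaternionic and balanced-complex cases ruled out because both $U$ and $U^*$ contribute to $V_{m/2}$. The only (harmless) difference is in the key lemma: you identify $U_{\mu(\ttT_\varphi)}$ as the irreducible $\fg_0$--module of highest weight $\mu$ via the standard Levi argument, whereas the paper introduces the auxiliary grading element $\ttT_\mu$ attached to the support of $\mu$ and compares its top eigenspace (the highest weight line) with $U_{\mu(\ttT_\varphi)}$ directly; both yield the same criterion $\{i \mid n_i=0\}\subset\{i\mid \mu^i=0\}$, and your normalization of $\mu$ versus $\mu^*$ in the complex case matches the paper's ``swap so that $\sfp\ge\sfq$.''
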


\begin{remark*}
Assume that $D = G_\bR/H_\varphi$ is Hermitian symmetric.  In \cite[Lemma 2.27]{FL}, Friedman--Laza identify $\{ i \ | \ n_i=0\} \subset \{ i \ | \ \mu^i =0\}$ as a necessary condition for the Hodge structure $\rho\circ\varphi$ to be of Calabi--Yau type, and show that $U$ is either real or complex.
\end{remark*}

\begin{example*}
Let $\fg$ be one of the exceptional Lie algebras $\fe_8$, $\ff_4$ or $\fg_2$.  Fix a lattice $\rtL \subset \Lambda \subset \wtL$ and Lie group $G = G_\Lambda$ with Lie algebra $\fg$ (cf. Section \ref{S:HS}).  By Remark \ref{R:mu*}, all representations $U$ of $\fg_\bC$ are self-dual.  Therefore, by Remark \ref{R:RCQ}, all irreducible representations $U$ of $\fg_\bC$ are either real or quaternionic.  Moreover, $\w_i(\ttT_j) \in \bZ$ for all $i,j$, cf. \cite[Appendix C.2]{MR1920389}.  That is, for any grading element $\ttT \in \tHom(\rtL,\bZ)$, we have $\ttT \in \tHom(\Lambda,\bZ)$.  This has two consequences:
\begin{circlist}
\item Given any irreducible $G_\bC$--representation $U$ of highest weight $\m\in\Lambda$ and any grading element $\ttT$, it follows from Remark \ref{R:RCQ} that $U$ is real.  That is, $\mu(\ttH_\mathrm{cpt})$ is always even, independent of our choice of $\varphi$.   Thus, $U = V_\bC$, for some real vector space $V$.
\item Let $\varphi : S^1 \to T = \ft_\bR/\Lambda^*$ be the homomorphism of $\bR$--algebraic groups determined by $\ttT$, cf. \S\ref{S:HS}.  Then, Theorem \ref{T:HdgRep} implies that $(V,\varphi)$ admits the structure of a Hodge representation.  Without loss of generality $\ttT = \sum n_i\ttT_i$ with $0 \le n_i\in\bZ$.  Whence, Proposition \ref{P:CY} implies that the Hodge representation $(V,\varphi)$ is of Calabi--Yau type if and only if $\m^i = 0$ for all $i$ such that $n_i=0$.
\end{circlist} 
\end{example*}

Set 
$$
  I^\m \ = \  \{ i \ | \ \m^i > 0 \} \ = \ \{ i \ | \ (\s_i,\m) \not= 0 \} 
  \quad\hbox{and}\quad 
  I^\varphi \ = \  \{ i \ | \ n_i > 0 \} \,.
$$
Note that
\begin{equation}\label{E:CY0}
  \{ i \ | \ n_i=0\} \,\subset\, \{ i \ | \ \mu^i =0\} 
  \quad\hbox{if and only if}\quad I^\m \subset I^\varphi \, .
\end{equation}
Let $\ttT_\m$ be the grading element \eqref{E:gr} associated to $I^\m$.  Let 
$$ U \ = \ \op\,U_k^\m\quad\hbox{ and }\quad U \ = \ \op\,U_\ell$$ 
respectively be the $\ttT_\m$ and $\ttT_\varphi$ graded decompositions \eqref{E:grU}.  Set 
$$\sfm \ = \ \m(\ttT_\m)\quad\hbox{ and } \quad\sfp \ = \ \m(\ttT_\varphi)\,.$$
Then, the highest weight line of $U$ is contained in $U^\m_\sfm \,\cap\, U_\sfp$.   

\begin{lemma} \label{L:CY}
The highest weight line of $U$ is $U^\m_\sfm$.  Thus $U^\m_\sfm \subset U_\sfp$.  Equality holds if and only if $I^\m \subset I^\varphi$.
\end{lemma}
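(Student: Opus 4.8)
The plan is to prove all three assertions by determining the top $\ttT_\m$--eigenspace $U^\m_\sfm$ explicitly, showing it coincides with the highest weight line, and then reading off the inclusion $U^\m_\sfm \subset U_\sfp$ and the equality criterion as consequences. First I would identify which weight spaces make up $U^\m_\sfm$. Every weight of $U$ has the form $\lambda = \m - \sum_j c_j\s_j$ with $c_j\in\bZ_{\ge0}$, and since $\ttT_\m = \sum_{i\in I^\m}\ttT_i$ with $\s_j(\ttT_i)=\d_{ij}$, one has $\s_j(\ttT_\m)=1$ for $j\in I^\m$ and $0$ otherwise, so $\lambda(\ttT_\m)=\sfm - \sum_{j\in I^\m}c_j \le \sfm$. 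Hence $\sfm$ is the top eigenvalue of $\ttT_\m$ on $U$, and $U^\m_\sfm$ is spanned by those $U_\lambda$ with $\lambda = \m - \b$, where $\b = \sum_{j\notin I^\m}c_j\s_j$ is supported off $I^\m$.

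The crux, and the step I expect to be the main obstacle, is to rule out every such $\b\ne0$, that is, to show $U^\m_\sfm$ is one--dimensional. Here I would use the norm of weights. Because $I^\m = \{i \mid (\s_i,\m)\ne0\}$, we have $(\s_j,\m)=0$ for every $j\notin I^\m$, hence $(\m,\b)=0$ and $(\m-\b,\m-\b)=(\m,\m)+(\b,\b)$. On the other hand, every weight $\nu$ of the irreducible module $U$ lies in the convex hull of the Weyl orbit $W\m$, on which the convex function $\nu\mapsto(\nu,\nu)$ attains its maximum $(\m,\m)$ at the vertices; thus $(\nu,\nu)\le(\m,\m)$ for all weights $\nu$. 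Applying this to $\nu=\m-\b$ forces $(\b,\b)=0$, and positive--definiteness of the form on the root span gives $\b=0$. Therefore $U^\m_\sfm$ equals the highest weight space $U_\m$, the highest weight line, which is the first assertion. The inclusion $U^\m_\sfm\subset U_\sfp$ is then immediate, since the highest weight vector has $\ttT_\varphi$--eigenvalue $\m(\ttT_\varphi)=\sfp$.

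Finally I would establish the equality criterion. If $I^\m\subset I^\varphi$, then the weights occurring in $U_\sfp$, which (by the same computation with $\ttT_\varphi=\sum n_i\ttT_i$) have the form $\m-\sum_{j\notin I^\varphi}c_j\s_j$, are supported off $I^\m$ as well, because $\{j\mid j\notin I^\varphi\}\subset\{j\mid j\notin I^\m\}$. The norm argument above then again forces each such weight to equal $\m$, so $U_\sfp=U_\m=U^\m_\sfm$. Conversely, if $I^\m\not\subset I^\varphi$, choose $i$ with $\m^i>0$ and $n_i=0$. Since $(\m,\s_i^\vee)=\m^i>0$, the $\s_i$--string through $\m$ descends, so $\m-\s_i$ occurs in $U$; moreover $(\m-\s_i)(\ttT_\varphi)=\sfp-n_i=\sfp$, whence $\m-\s_i\in U_\sfp$ while $\m-\s_i\ne\m$. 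Thus $U_\sfp$ strictly contains the highest weight line $U^\m_\sfm$, and equality fails. This handles both directions and completes the proof.
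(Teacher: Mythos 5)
Your proof is correct, but it reaches the key conclusions by a genuinely different route than the paper. The paper's proof rests on two cited facts: (i) every weight of $U$ has the form $\m - (\s_{j_1}+\cdots+\s_{j_s})$ with every partial sum again a weight, and (ii) the parabolic $\fp_\m$ determined by $\ttT_\m$ stabilizes the highest weight line, so that $\m-\s_i$ is a weight if and only if $i\in I^\m$. Combining these, every weight $\lambda\neq\m$ begins its descent with some $\s_i$, $i\in I^\m$, which immediately gives $\lambda(\ttT_\m)\le\sfm-1$ (hence $U^\m_\sfm$ is the highest weight line) and $\lambda(\ttT_\varphi)\le\sfp-1$ whenever $I^\m\subset I^\varphi$ (hence equality). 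You instead isolate the weights $\m-\b$ with $\b$ supported off $I^\m$ (resp.\ off $I^\varphi$) and kill them with the convexity bound $(\nu,\nu)\le(\m,\m)$ together with the orthogonality $(\m,\s_j)=0$ for $j\notin I^\m$; for the converse you invoke the $\s_i$--string through $\m$, which is the same fact as (ii) obtained by a more elementary means. Your argument is self-contained and avoids the appeal to the parabolic stabilizer, at the cost of needing the convex-hull characterization of the weight polytope; the paper's chain argument is shorter once facts (i) and (ii) are granted and yields the quantitative drop $\lambda(\ttT_\m)\le\sfm-1$ directly. Both are complete proofs of all three assertions.
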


\begin{proof}[Proof of lemma]
The lemma is a consequence of two facts.  First, note that every weight $\lambda$ of $U$ is of the form $\lambda = \m -(\s_{j_1} + \cdots + \s_{j_s})$ for some sequence $\{\sigma_{j_\ell}\}$ of simple roots with the property that each $\m -(\s_{j_1} + \cdots + \s_{j_t})$ is also a weight for $1 \le t \le s$.  (This is an elementary result from representation theory; see, for example, \cite{MR1920389}.)  

Second, let $\fp_\m$ be the parabolic subalgebra \eqref{E:p} determined by $\ttT_\m$.  Then $\fp_\m$ is the stabilizer of the highest weight line, cf. \cite[Proposition 3.2.5 or Theorem 3.2.8]{MR2532439}.  Consequently, given a simple root $\s_i \in \Sigma$, $\lambda=\m-\sigma_i$ is a weight of $U$ if and only if $\fg_{-\s_i} \not\subset \fp_\m$; equivalently, $i \in  I^\m$.   

These two facts imply that any weight $\lambda \not= \m$ of $U$ is necessarily of the form $\lambda = \m - \s_i - (\s_{j_2} + \cdots + \s_{j_s})$ for some $i \in I^\m$.  Therefore, $\lambda(\ttT_\m) = \m(\ttT_\m) - 1 - (\s_{j_2}+\cdots \s_{j_s})(\ttT_\m) \le \sfm - 1$.  Therefore, $U^\m_\sfm$ is necessarily the highest weight line.

The discussion preceding the statement of the lemma yields $U^\m_\sfm \subset U_\sfp$.  Moreover, $\lambda(\ttT_\varphi) = \sfp - (\s_i+\s_{j_2}+\cdots+\s_{j_s})(\ttT_\varphi)$.  So, $U^\m_\sfm = U_\sfp$ holds if and only if $I^\m \subset I^\varphi$. 
\end{proof}

Set $-\sfq = -\m^*(\ttT_\varphi)$.  By Remark \ref{R:mu*}, the lowest weight line of $U$ is contained in $U_{-\sfq}$.  In particular, the $\ttT_\varphi$--graded decompositions \eqref{E:grU} of $U$ and $U^*$ are 
\begin{eqnarray*}
  U & = &  
  U_{\sfp} \op U_{\sfp-1} \op \cdots \op U_{1-\sfq} \op U_{-\sfq} \,,\\
  U^* & = &  
  U^*_{\sfq} \op U^*_{\sfq-1} \op \cdots \op U^*_{1-\sfp} \op U^*_{-\sfp} \,.
\end{eqnarray*}
The proof of Proposition \ref{P:CY} breaks into three cases. 

\begin{proof}[Proof of Proposition \ref{P:CY} when $U$ is real]
First, as noted in Remark \ref{R:RCQ}, $U$ is real if and only if $\mu = \mu^*$ and $\mu(\ttH_\mathrm{cpt})$ is even.  Since $\ttH_\mathrm{cpt} = 2 \sum_{n_i \in2\bZ} \ttT_i$, the latter is equivalent to $\sum_{n_i \in2\bZ} \mu(\ttT_i) \in \bZ$.

Given $U$ real, we have $V_\bC=U$.  So $V_{m/2} = U_\sfp$.  By definition, the Hodge representation is of Calabi--Yau type if and only if $\tdim_\bC U_\sfp=1$.  By Lemma \ref{L:CY}, $\tdim_\bC U_\sfp = 1$ if and only if $I^\m \subset I^\varphi$.   Thus, $(V,\rho\circ\varphi)$ is a Hodge structure of Calabi--Yau type if and only if $I^\m \subset I^\varphi$.  The proposition now follows from \eqref{E:CY0}.
\end{proof}

\begin{proof}[Proof of Proposition \ref{P:CY} when $U$ is quaternionic]
As noted in Remark \ref{R:RCQ}, $U$ is quaternionic if and only if $V_\bC = U \op U^*$ and $U \simeq U^*$.  Then $\sfp = \sfq$, so that $V_{m/2} = U_\sfp\op U_\sfp$ is of dimension at least two.  Therefore, $(V,\rho\circ\varphi)$ is not a Hodge structure of Calabi--Yau type.
\end{proof}

\begin{proof}[Proof of Proposition \ref{P:CY} when $U$ is complex ]
As noted in Remark \ref{R:RCQ}, $U$ is complex if and only if $V_\bC = U \op U^*$ and $U \not\simeq U^*$.  Swapping $U$ and $U^*$ if necessary, we may assume that $\sfp \ge \sfq$.  Then, $V_{m/2} = U_\sfp \op U^*_\sfp$, and $U^*_\sfp=0$ if and only if $\sfp > \sfq$.  Therefore, if $\sfp=\sfq$, then $\tdim_\bC V_{m/2} >1$ and $(V,\rho\circ\varphi)$ is not of Calabi--Yau type.   If $\sfp>\sfq$, then Lemma \ref{L:CY} implies that $(V,\rho\circ\varphi)$ gives a Hodge structure of Calabi--Yau type if and only if $I^\m \subset I^\varphi$.  The proposition now follows from \eqref{E:CY0}.
\end{proof}

\noindent
This completes the proof of Proposition \ref{P:CY}.  

\section{Proof of Proposition \ref{P:sm}} \label{A:p}

\subsection{Preliminaries} \label{A:p0}
Let $I \subset \{ 1,\ldots,r\}$ be the index set associated to $\ttT_\varphi$, cf. \eqref{E:TI}, so that 
$$
  \ttT_\varphi \ = \ \sum_{i \in I} \ttT_i \,.
$$
Let $X_w$ be a maximal Schubert VHS.  By Corollary \ref{C:ww'}, this is equivalent to: $\Delta(w)$ is maximal, with respect to containment, among the 
  $\{ \Delta(w') \ | \ w' \in W^\varphi_\sI\}$.  By Lemma \ref{L:abel}, the IVHS $\fn_w$, defined by \eqref{E:nw}, is abelian; equivalently, 
\begin{equation} \label{E:abrt}
\hbox{if $\a,\b\in\Delta(w)$, then $\a+\b$ is not a root.}
\end{equation}
By \eqref{E:Dm1}, we may decompose $\Delta(w)$ as the disjoint union $\sqcup_{i\in I} \Delta_i(w)$ where 
$$
  \Delta_i(w) \ = \ \{ \a \in \Delta(w) \ | \ \a(\ttT_i) = 1 \} \, .
$$
In our descriptions of the set $\Delta_i(w)$ below, it is essential to keep in mind that $\a(\ttT_j) = 0$ for all $\a\in\Delta_i(w)$ and $i\not= j \in I$.  (This is due to the fact that the positive roots are of the form $m^a\s_a$ with $0 \le m^a \in \bZ$, cf. \cite{MR1920389}.)  Write 
$$
  I \ = \ \{ i_1 < i_2 < \cdots < i_t \} \,. 
$$
It will be convenient to set 
$$
  i_0 \,= \, 0 \quad\hbox{and}\quad i_{t+1} \,=\, r+1 \,.
$$

\subsection{The case 
\boldmath $G_\bC = \tSL_{r+1}\bC$ \unboldmath } \label{A:psl}
We will use throughout the proof the standard representation theoretic result that the positive roots of $\fsl_{r+1}\bC$ are of the form
\begin{equation} \label{E:Art}
  \Delta^+ \ = \ \{ \s_j + \cdots + \s_k \ | \ 1 \le j \le k \le r \} \, ,
\end{equation}
cf. \cite{MR1920389}.  In particular, 
\begin{equation} \label{E:Apf3}
  \Delta_{i_s}(w) \ \subset \ \{ \s_j + \cdots + \s_k  \ | \ 
  i_{s-1} \,<\, j \,\le\, i_s \,\le\, k \,<\, i_{s+1} \} \, .
\end{equation}

\subsubsection*{Step 1:}  
Suppose that $\Delta_i(w)$ is empty for some $i\in I$.  Then $\Delta(w) \subset \Delta(\fa_{i-1} \times \fa_{r-i}) = \Delta(\fa_{i-1}) \cup \Delta(\fa_{r-i})$, where $\fa_{i-1} \times \fa_{r-i} = \fsl_i\bC \times \fsl_{r+1-i}\bC$ is the semisimple subalgebra of $\fm_\bC$ generated by the simple roots $\Sigma \backslash\{ \s_i\}$.  Therefore, $X_w = X_{w^1} \times X_{w^2} \subset (\tSL_i\bC/P_{I^1}) \times (\tSL_{r+1-i}/P_{I^2})$, where $\Delta(w^1) = \Delta(w) \cap \Delta(\fa_{i-1})$ and $\Delta(w^2) = \Delta(w) \cap \Delta(\fa_{r-i})$, and $I^1 = \{ j \in I \ | \ j < i \}$ and $I^2 = \{ j \in I \ | \ i < j \}$.  The Schubert variety $X_w$ is Hermitian symmetric if and only if $X_{w^1}$ and $X_{w^2}$ are Hermitian symmetric.  So, without loss of generality, \emph{we may restrict to the case that $\Delta_i(w)$ is nonempty for all $i \in I$.}

\subsubsection*{Step 2:}  
Suppose that $i_s+1 = i_{s+1}$ for some $1 \le s \le t-1$.  By \eqref{E:Apf3}, any pair of roots $\a \in \Delta_{i_s}(w)$ and $\b \in \Delta_{i_{s+1}}(w)$ is of the form 
\begin{eqnarray*}
  \a & = & \s_j + \cdots + \s_{i_s} \,,
  \quad\hbox{ for some } i_{s-1} < j \le i_s \,,\\
  \b & = & \s_{i_{s+1}} + \cdots + \s_k \,,
  \quad\hbox{ for some } i_{s+1} \le k < i_{s+2} \, .
\end{eqnarray*}
Therefore, $\a+\b$ is a root, contradicting \eqref{E:abrt}.  Therefore, 
$$
  i_s + 1 \ < \ i_{s+1} \,.
$$

\subsubsection*{Step 3:}  
Suppose that $i_{s-1}<j < i_s\le k < i_{s+1}$, and $\b = \s_j + \cdots + \s_k \in \Delta(w)$.  I claim that $\b-\s_j \in \Delta(w)$ as well.  To see this, suppose the converse.  Then $\b-\s_j \in \Delta^+\backslash\Delta(w)$.  Also, $\s_j \in \Delta^+(\fg_0) \subset \Delta^+\backslash\Delta(w)$.  Since $\Delta^+\backslash\Delta(w)$ is closed, cf. Remark \ref{R:Dw}(c), this implies $\b \not\in\Delta(w)$, a contradiction.  Therefore, $\b-\s_j\in\Delta(w)$.  Similarly, if $i_s < k$, then $\b-\s_k = \s_j + \cdots + \s_{k-1} \in \Delta(w)$.  It follows by induction that, if $\s_j + \cdots + \s_k \in \Delta(w)$, then $\s_{j'} + \cdots + \s_{k'} \in \Delta(w)$ for all $j \le j' \le i_s \le k' \le k$.

\subsubsection*{Step 4:}  
With respect to the expression \eqref{E:Apf3}, let $j_s$ be the minimal $j$, and $k_s$ the maximal $k$, as $\a$ ranges over $\Delta_{i_s}(w)$.  Suppose that $k_s + 1 \ge j_{s+1}$.  Let $\a \in \Delta_{i_s}(w)$ be a root realizing the maximum value $k_s$, and $\b\in\Delta_{i_{s+1}}(w)$ be a root realizing the minimum value $j_{s+1}$.  Then $\a = \s_a + \cdots + \s_{k_s}$ and $\b = \s_{j_{s+1}} + \cdots + \s_b$ for some $i_{s-1} < a \le i_s$ and $i_{s+1} \le b < i_{s+2}$.  By Step 3, $\c = \s_{k_s+1} + \cdots +\s_b \in \Delta(w)$.  However, $\a+\c$ is a root, contradicting \eqref{E:abrt}.  We conclude that 
\begin{equation} \label{E:Apf4}
  k_s + 1 \ < \ j_{s+1} \,.
\end{equation}

\subsubsection*{Step 5:}  
Set $\Phi_s = \{ \s_j + \cdots + \s_k \ | \ j_s \le j \le i_s \le k \le k_s \}$.  Then $\Delta_{i_s}(w) \subset \Phi_s$ and $\Delta(w) \subset \Phi = \cup_{1\le s \le t} \Phi_s \subset \Delta(\fm_1)$.  Moreover, \eqref{E:Art} and \eqref{E:Apf4} imply that both $\Phi$ and $\Delta^+\backslash\Phi$ are closed.  Therefore, by Remark \ref{R:Dw}(c), $\Phi = \Delta(w')$ for some $w' \in W^\varphi$.  By \eqref{E:Wint}, $w' \in W^\varphi_\sI$.  It follows from the maximality of $\Delta(w)$ (cf. Section \ref{A:p0}) that $\Delta(w) = \Phi$.  That is,
\begin{equation} \label{E:Apf1}
  \Delta_{i_s}(w) \ = \ \{ \s_j + \cdots + \s_k \ | \ 
  j_s \le j \le i_s \le k \le k_s \} \, .
\end{equation}

\subsubsection*{Step 6:}  
Let $s < t$.  Steps 3 and 4 imply $\a = \s_{i_s} + \cdots + \s_{k_s+1} \in\Delta(\fm_1)\backslash\Delta(w)$.  Suppose that $k_s + 2 < j_{s+1}$.  Let $\Phi = \Delta(w) \cup \{ \a\}$.  I claim that $\Phi = \Delta(w')$ for some $w' \in W^\varphi_\sI$.  This will contradict the maximality of $\Delta(w)$, and taken with \eqref{E:Apf4} allows us to conclude that
\begin{equation} \label{E:Apf2}
  k_s + 2 \ = \ j_{s+1} \quad \hbox{ for all } \ 1 \le s \le t-1 \, .
\end{equation}

To prove the claim, first note that $\Phi \subset \Delta(\fm_1)$.  So, if $\Phi = \Delta(w')$ for some $w' \in W^\varphi$, then it follows immediately from \eqref{E:Wint} that $w' \in W^\varphi_\sI$.  By Remark \ref{R:Dw}(c), $\Phi = \Delta(w')$ for some $w' \in W^\varphi$ if and only if both $\Phi$ and $\Delta^+\backslash\Phi$ are closed.  Since $\Delta(w)$ is closed itself, $\Phi$ can fail to be closed only if $\a+\b \in \Delta$ for some $\b\in \Delta(w)$.  However, it follows from \eqref{E:Art} and the hypothesis $k_s + 2 < j_{s+1}$, that $\a+\b$ is not a root for any $\b \in \Delta(w)$.  Thus $\Phi$ is closed.  

Similarly, since $\Delta^+\backslash\Delta(w)$ is closed (Remark \ref{R:Dw}(c)), to see that $\Delta^+\backslash\Phi = \Delta^+\backslash ( \Delta(w) \cup \{\a\} )$ is closed, it suffices to show that there exist no $\b\in\Delta(\fm_1)\backslash\Phi$ and $\c\in\Delta^+(\fm_0)$ such that $\b+\c = \a$.  By \eqref{E:Art}, any such pair would necessarily be of the form 
$$
  \b \, = \, \s_{i_s} + \cdots + \s_b 
  \quad\hbox{and}\quad 
  \c \,=\, \s_{b+1} + \cdots + \s_{k_s+1} \,,
$$
for some $i_s \le b \le k_s$.  By \eqref{E:Apf1}, $\s_{i_s} + \cdots + \s_b \in \Delta(w)\subset\Phi$ for all $i_s \le b \le k_s$.  Thus, there exists no such $\b \in \Delta(\fm_1) \backslash \Phi$, and we may conclude that $\Delta^+\backslash\Phi$ is closed.

\subsubsection*{Step 7:}  
Define 
$$
  A \ = \ \{ a_{s-1} = j_s - 1 \ | \ 2 \le s \le t\} \ \stackrel{\eqref{E:Apf2}}{=} \ 
  \{ a_s = k_s + 1 \ | \ 1 \le s \le t-1 \} \, .
$$
From \eqref{E:Apf1} and \eqref{E:Apf2} we deduce that $\Delta(w) = \{ \a \in \Delta(\fg_1) \ | \ \a(\ttT_a)=0 \ \forall \ a \in A \} = \{ \a \in \Delta(\fg_1) \ | \ \a(\ttT_A)=0 \}$.\footnote{The claim requires our assumption that $\Delta_i(w)$ is nonempty for all $i \in I$, cf. Step 1.}  From this it follows that 
\begin{equation} \label{E:XwA}
\renewcommand{\arraystretch}{1.3}
\begin{array}{rcl}
  X_w & = & \tGr(i_1 , a_1) \ \times \tGr( i_2-a_1 , a_2 - a_1) \ \times \ \cdots\\
  & & \cdots \ \times \
  \tGr(i_{t-1} - a_{t-2} , a_{t-1}-a_{t-2}) \ \times \
  \tGr( i_t - a_{t-1} , r-a_{t-1} ) \,.
\end{array}
\end{equation}
This establishes the proposition in the case that $G_\bC = \tSL_{r+1}\bC$.

\begin{remark*}
It is can be checked that the maximal Schubert VHS $X_w$ satisfying the assumption of Step 1 (that the $\Delta_i(w) \not=\emptyset$ for all $i$) are indexed by subsets $A = \{ a_1 < \cdots < a_{t-1} \}$ satisfying $i_s < a_s < i_{s+1}$ for all $1 \le s \le t-1$.  
\end{remark*}

\subsection{The case 
\boldmath $G_\bC = \tSp_{2r}\bC$ \unboldmath } \label{A:psp}

We will use throughout the proof the standard representation theoretic result that the positive roots of $\fsp_{r+1}\bC$ are of the form
\begin{eqnarray*}
  \Delta^+ & = & 
  \{ \s_j + \cdots + \s_k \ | \ 1 \le j \le k \le r \} \\
  & & \quad \cup \ 
  \{ \s_j + \cdots + \s_{k-1} + 2(\s_k + \cdots +\s_{r-1}) + \s_r 
  \ | \ 1 \le j \le k \le r-1 \}\\
  & = & \{ \s_{j,k} \ | \ 1 \le j \le k \le r \} \ \cup \ 
  \{ \s_{j,r-1} + \s_{k,r} \ | \ 1 \le j \le k \le r-1 \} \, ,\\
  & & \hbox{where} \quad 
  \s_{j,k} \ = \ \s_j + \cdots + \s_k \,, \ 1 \le j \le k \le r \, ,
\end{eqnarray*}
and we employ the convention that $\s_{r,r-1}=0$; cf. \cite{MR1920389}.  In particular,  
\begin{equation} \label{E:Cpf3a}
\renewcommand{\arraystretch}{1.3}
\begin{array}{rrcl}
  \hbox{if $s<t$,} & \Delta_{i_s}(w) & \subset & \{ \s_{j,k} \ | \ 
  i_{s-1} \,<\, j \,\le\, i_s \,\le\, k \,<\, i_{s+1} \} \,;\\
  \hbox{if $i_t<r$,} & \Delta_{i_t}(w) & \subset & \{ \s_{j,k} \ | \ 
  i_{t-1} \,<\, j \,\le\, i_t \,\le\, k \,\le\, r \} \\
  & & & 
  \quad \cup \ \ \{ \s_{j,r-1} + \s_{k,r} \ | \ 
  i_{t-1} \,<\, j \,\le\, i_t \,<\, k \,\le \, r-1\} \,;\\
  \hbox{if $i_t=r$,} & \Delta_{i_t}(w) & \subset & \{ \s_{j,r-1} + \s_{k,r} \ | \ 
  i_{t-1} \,<\, j , k \,\le \, r\} \,.
\end{array}
\end{equation}

\subsubsection*{Step 1:} 
Suppose $\Delta_i(w)$ is empty, for some $i \in I$.  Then $\Delta(w) \subset \Delta(\fa_{i-1} \times \fc_{r-i}) = \Delta(\fa_{i-1}) \cup \Delta( \fc_{r-i})$, where $\fa_{i-1} \times \fc_{r-i} = \fsl_i\bC \times \fsp_{2(r-i)}\bC$ is the semisimple subalgebra of $\fm_\bC$ generated by the simple roots $\Sigma \backslash\{\s_i\}$.  Therefore, $X_w = X_{w^1} \times X_{w^2} \subset (\tSL_i\bC/P_{I^1}) \times (\tSp_{2(r-i)}/P_{I^2})$, where $\Delta(w^1) = \Delta(w) \cap \Delta(\fa_{i-1})$ and $\Delta(w^2) = \Delta(w) \cap \Delta(\fc_{r-i})$, and $I^1 = \{ j \in I \ | \ j < i \}$ and $I^2 = \{ j \in I \ | \ i < j \}$.  The Schubert variety $X_w$ is homogeneous if and only if $X_{w^1}$ and $X_{w^2}$ are homogeneous.  So, without loss of generality, \emph{we may restrict to the case that $\Delta_i(w)$ is nonempty for all $i \in I$.}

\subsubsection*{Step 2:}  
Suppose that $i_s+1 = i_{s+1}$ for some $1 \le s \le t-1$.  Then \eqref{E:Cpf3a} implies that, for any pair of roots $\a \in \Delta_{i_s}(w)$ and $\b \in \Delta_{i_{s+1}}(w)$, we have $\a+\b\in\Delta$.  This contradicts \eqref{E:abrt}.  Therefore, 
$$
  i_s + 1 \ < \ i_{s+1} \,.
$$

\subsubsection*{Step 3:}  
Let $\a \in \Delta_{i_s}(w)$ be as given in \eqref{E:Cpf3a}.  Suppose that $j < i_s$ and $\b = \a-\s_j \not \in \Delta(w)$.  Then $\s_j \in \Delta^+(\fm_0)$, so that $\b,\s_j \in \Delta^+\backslash\Delta(w)$ while $\b+\s_j \in \Delta(w)$.  This contradicts the closure of $\Delta^+\backslash\Delta(w)$, cf. Remark \ref{R:Dw}(c).  Therefore, $\b\in\Delta(w)$.  Similarly, if $i_s < k$, then $\b =\a-\s_k \in \Delta(w)$.  It follows by induction that, 
\begin{a_list_nem}
\item if $\s_{j,k} \in \Delta_{i_s}(w)$, then $\s_{j',k'} \in \Delta_{i_s}(w)$ for all $j \le j' \le i_s \le k' \le k$;
\item if $i_t < r$ and $\s_{j,r-1} + \s_{k,r} \in \Delta_{i_t}(w)$, then $\s_{j',r-1} + \s_{k',r}\in \Delta_{i_t}(w)$, for all $j \le j' < i_t$ and $k \le k' \le r$;
\item if $i_t = r$ and $\s_{j,r-1} + \s_{k,r} \in \Delta_r(w)$, then $\s_{j',r-1} + \s_{k',r}$ for all $j \le j'$ and $k \le k'$.
\end{a_list_nem}
Consider the case that $i_t < r$.   If $\a=\s_{j,r}\in \Delta_{i_t}(w)$, then (a) implies $\b=\s_{j,r-1} \in \Delta_{i_t}(w)$.  But $\a+\b \in \Delta$, contradicting \eqref{E:abrt}.  Therefore, $\s_{j,r}\not\in \Delta_{i_t}(w)$, if $i_t < r$.

Continuing with $i_t < r$, suppose that $\a=\s_{j,r-1} + \s_{k,r} \in \Delta_{i_t}(w)$.  Then (b) implies $\s_{j,r-1} + \s_{r,r} = \s_{j,r}\in \Delta_{i_t}(w)$; we have just seen that this is not possible.  Therefore, $\s_{j,r-1} + \s_{k,r} \not\in \Delta_{i_t}(w)$, if $i_t < r$.  

These two observations allow us to update \eqref{E:Cpf3a} to 
$$
  \hbox{if $i_t<r$,} \quad\hbox{then}\quad
  \Delta_{i_t}(w) \ \subset \ \{ \s_{j,k} \ | \ 
  i_{t-1} \,<\, j \,\le\, i_t \,\le\, k \,< \, r\} \,.
$$
In particular, if $i_t < r$, then $\Delta(w) \subset \{ \a\in\Delta^+ \ | \ \a(\ttT_r) = 0\} = \Delta^+(\fa_{r-1})$, where $\fa_{r-1} = \fsl_r\bC$ is the simple subalgebra of $\fm_\bC = \fsp_{2r}\bC$ generated by the simple roots $\Sigma \backslash\{\s_r\}$.  That is, we are reduced to the case that $X_w$ is a maximal Schubert VHS of a homogeneously embedded $\tSL_{r}\bC/P \subset \check D = G_\tad/P_\varphi$.  This is precisely the case addressed in Section \ref{A:psl}, and $X_w$ is necessarily of the form \eqref{E:XwA}.
\begin{center}
\emph{For the remainder of the proof, assume that $i_t = r$.}
\end{center}
In particular, the roots of $\Delta(w) = \sqcup_{i\in I} \Delta_i(w)$ are of the form
\begin{equation} \label{E:Cpf3b}
\renewcommand{\arraystretch}{1.3}
\begin{array}{rrcl}
  \hbox{if $s<t$,} & \Delta_{i_s}(w) & \subset & \{ \s_{j,k} \ | \ 
  i_{s-1} \,<\, j \,\le\, i_s \,\le\, k \,<\, i_{s+1} \} \,;\\
  \hbox{and} & \Delta_{r}(w) & \subset & \{ \s_{j,r-1} + \s_{k,r} \ | \ 
  i_{t-1} \,<\, j , k \,\le \, r\} \,.
\end{array}
\end{equation}

\subsubsection*{Step 4:}
Fix $s < t$.  With respect to \eqref{E:Cpf3b}, let $j_s$ be the minimal $j$, and $k_s$ the maximal $k$, amongst all $\a \in \Delta_{i_s}(w)$.  If $s < t-1$, then the argument of Section \ref{A:psl}, Step 4, yields $k_s + 1 < j_{s+1}$.  It remains to consider the case $s=t-1$.

With respect to \eqref{E:Cpf3b}, let $j_t$ be the minimal $j$ over all $\a\in\Delta_r(w)$.  Suppose that $k_{t-1}+1 \ge j_t$.  Let $\a \in \Delta_{i_{t-1}}(w)$ be a root realizing the maximum value $k_{t-1}$, and $\b\in\Delta_r(w)$ a root realizing the minimum value $j_t$.  Then $\a = \s_{a,k_{t-1}}$ for some $a \le i_{t-1}$, and $\b = \s_{j_t,r-1} + \s_{k,r}$ for some $j_t \le k \le r$.  Step 3(c) applied to $\b$ yields $\c = \s_{k_{t-1}+1,r-1} + \s_{k,r} \in \Delta_r(w)$.  Then $\a+\c = \s_{a,r-1} + \s_{k,r} \in \Delta$, contradicting \eqref{E:abrt}.  Thus,
$$
  k_s + 1 \ < \ j_{s+1} \quad\hbox{for all}\quad 1\le s \le r-1 \, .
$$

\subsubsection*{Step 5:}
Given $s < t$, define $\Phi_s = \{ \s_{j,k} \ | \ j_s \le j \le i_s \le k \le k_s\}$.  Set $\Phi_t = \{ \s_{j,r-1} + \s_{k,r} \ | \ j_t \le j,k \le r \}$.  Then $\Delta_{i_s}(w) \subset \Phi_s$ for all $1 \le s \le t$.  Thus $\Delta(w) \subset \Phi = \cup_s \Phi_s$.  An argument analogous to that of Section \ref{A:psl}, Step 5, yields $\Delta(w) = \Phi$.  That is,
\begin{equation} \label{E:Cpf1}
\renewcommand{\arraystretch}{1.3}
\begin{array}{rcl}
  \Delta_{i_s} & = & \{ \s_{j,k} \ | \ j_s \le j \le i_s \le k \le k_s \} \,, 
  \quad\hbox{for all} \quad s < t \,,\\
  \Delta_r(w) & = & \{ \s_{j,r-1} + \s_{k,r} \ | \ j_t \le j , k \le r \} \, .
\end{array}
\end{equation}
Details are left to the reader.

\subsubsection*{Step 6:}
Let $s < t$.  Arguing as in Step 6 of Section \ref{A:psl}, we may show that 
\begin{equation} \label{E:Cpf2}
  k_s+2 \ = \ j_{s+1} \quad\hbox{for all}\quad 1\le s \le t-1 \, .
\end{equation}

\subsubsection*{Step 7:}  
Define 
$$
  A \ = \ \{ a_{s-1} = j_s - 1 \ | \ 2 \le s \le t\} \ \stackrel{\eqref{E:Cpf2}}{=} \ 
  \{ a_s = k_s + 1 \ | \ 1 \le s \le t-1 \} \, .
$$
Then \eqref{E:Cpf1} and \eqref{E:Cpf2} are equivalent to $\Delta(w) = \{ \a \in \Delta(\fg_1) \ | \ \a(\ttT_a)=0 \ \forall \ a \in A \} = \{ \a \in \Delta(\fg_1) \ | \ \a(\ttT_A)=0 \}$.\footnote{The claim requires our assumption that $\Delta_i(w)$ is nonempty for all $i \in I$, cf. Step 1.}  Thus,
\begin{equation} \label{E:XwC}
\renewcommand{\arraystretch}{1.3}
\begin{array}{rcl}
  X_w & = & \tGr(i_1 , a_1) \ \times \tGr( i_2-a_1 , a_2 - a_1) \ \times \ \cdots\\
  & & \cdots \ \times \
  \tGr(i_{t-1} - a_{t-2} , a_{t-1}-a_{t-2}) \ \times \
  \tLG( r - a_{t-1} , 2( r-a_{t-1} ) ) \,.
\end{array}
\end{equation}
Above, $\tLG(d,2d) \simeq C_d/P_d$ is the Lagrangian grassmannian of $d$--planes in $\bC^{2d}$ that are isotropic with respect to a nondegenerate, skew-symmetric bilinear form $\bvsigma$.  

This establishes the proposition in the case that $G_\bC = \tSp_{2r}\bC$.

\section{Bruhat order}\label{S:bo}

Let $w,w'\in W$.  Given a root $\a\in\Delta^+$, let $r_\a \in W$ denote the associated reflection.  Write $w \stackrel{\a}{\to} w'$ if $|w'| = |w|+1$ and $w' = r_\a w$.  The Bruhat order is a partial order on $W$ defined by $w \le w'$ if either $w = w'$ or there is a chain $w \stackrel{\a_1}{\to} w_1  \stackrel{\a_2}{\to} \cdots  \stackrel{\a_n}{\to} w'$.  The following is well-known; see, for example, \cite{MR0429933} and the references therein.

\begin{lemma*}
$w \le w'$ if and only if $X_w \subset X_{w'}$.
\end{lemma*}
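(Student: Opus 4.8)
The plan is to establish the sharper cell-decomposition statement
\[
  X_{w'} \ = \ \bigsqcup_{\substack{v\in W^\varphi \\ v\le w'}} C_v \,,
\]
from which both directions of the lemma follow at once. The first step is a reduction to cells: since $X_{w'}=\overline{C}_{w'}$ is the closure of a single $B$--orbit and $B$ acts on $\check D$ by homeomorphisms, $X_{w'}$ is $B$--stable, hence a union of Schubert cells. Because $\check D=\bigsqcup_{v\in W^\varphi}C_v$, the containment $X_w\subset X_{w'}$ is therefore equivalent to $C_w\subset X_{w'}$, i.e. to $C_w$ being one of the cells occurring in $X_{w'}$. The lemma is thus reduced to identifying precisely which cells lie in $X_{w'}$, which is the displayed identity.

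For the inclusion $\bigsqcup_{v\le w'}C_v\subset X_{w'}$ I would argue via covering relations. It suffices, by transitivity of both set-containment and the Bruhat order, to treat a single cover $v\stackrel{\a}{\to}w'$ and show $C_v\subset X_{w'}$. The engine is a rank-one computation: for a suitable simple reflection $s=r_i$ with minimal parabolic $P_s\supset B$ (so that $P_s/B\cong\bP^1$), the sweep $\overline{P_s\cdot C_v}$ is again a Schubert variety, and an $\tSL_2$--calculation in the relevant cell identifies it as the Schubert variety obtained from $X_v$ by adjoining precisely the cell one step higher in length, yielding $C_v\subset X_{w'}$. A general cover is reduced to such simple covers by the subword/exchange property, which shows the Bruhat order is generated by simple-reflection covers; composing the resulting chain of inclusions gives $X_v\subset X_{w'}$ for every $v\le w'$.

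For the reverse inclusion $X_{w'}\subset\bigsqcup_{v\le w'}C_v$ I would induct on $|w'|$, the case $|w'|=0$ (where $X_{w'}=\{o\}$) being immediate. For the inductive step, choose a simple reflection $s$ shortening a reduced word of $w'$ and use the $\bP^1$--fibration $\pi\colon\check D\to G_\bC/Q$ onto the flag variety for the parabolic $Q\supsetneq P_\varphi$ obtained by adjoining $s$ to the Levi. One shows that $X_{w'}$ is swept from the shorter Schubert variety $X_{sw'}$ along the fibres of $\pi$ (equivalently, $X_{w'}=\pi^{-1}\pi(X_{sw'})$ for the corresponding cover $sw'\to w'$); feeding the inductive hypothesis for $X_{sw'}$ through the elementary length-change rule for $\pi$ then forces every cell $C_v\subset X_{w'}$ to satisfy $v\le w'$. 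Throughout, I would pass to the full flag variety $G_\bC/B$ when convenient, since the projection $G_\bC/B\to\check D=G_\bC/P_\varphi$ carries Schubert varieties to Schubert varieties compatibly with the Bruhat order restricted to the minimal coset representatives $W^\varphi$, reducing the parabolic case to the familiar one.

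The main obstacle is the rank-one sweep computation together with its bookkeeping: one must pin down the precise $\tSL_2$--model identifying $\overline{P_s\cdot C_v}$ with the next Schubert variety and track the left/right conventions implicit in $C_w=Bw^{-1}\cdot o$, so that the geometric covers match the combinatorial covers $v\stackrel{\a}{\to}w'$. The descent from $G_\bC/B$ to $\check D$ and the verification that the Bruhat order restricts correctly to $W^\varphi$ is the other point requiring care.
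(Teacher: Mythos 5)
The paper does not actually prove this lemma: it is quoted as a classical fact (Chevalley's description of Schubert cell closures) with a pointer to \cite{MR0429933} and the references therein, so there is no argument in the paper for your proposal to match. What you have done is reconstruct the standard proof from the literature, via the cell decomposition $X_{w'}=\bigsqcup_{v\le w'}C_v$, rank-one sweeps $\overline{P_s\cdot C_v}$ by minimal parabolics, and induction along $\bP^1$--fibrations; that is the right strategy, and the two places you flag as needing care --- the $\tSL_2$ computation identifying $P_s\cdot X_v$ with the next Schubert variety under the convention $C_w=Bw^{-1}\cdot o$, and the descent from $G_\bC/B$ to $\check D=G_\bC/P_\varphi$ along the minimal coset representatives $W^\varphi$ --- are exactly the standard technical points. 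One caution about the organization of your first inclusion: the paper defines covers $v\stackrel{\a}{\to}w'$ using reflections $r_\a$ for \emph{arbitrary} positive roots $\a$, so a single Bruhat cover is in general not a simple-reflection cover and cannot literally be ``reduced to simple covers''; the correct bridge is to prove the geometric inclusion for all $v$ admitting a reduced word that is a subword of a reduced word of $w'$ (via the product description $X_{w'}=P_{i_1}\cdots P_{i_\ell}\cdot o$) and to invoke separately the purely combinatorial equivalence between the subword order and the reflection-cover order used in the paper's definition. With that reorganization your outline is the complete standard argument; given that the paper treats the statement as known, simply citing \cite{MR0429933} as the paper does is the more economical route, while your reconstruction has the merit of making the appendix self-contained.
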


\begin{lemma} \label{L:ww'}
Let $w,w' \in W^\varphi_\sI$.  Then $w \stackrel{\a}{\to} w'$ if and only if $\Delta(w') =\Delta(w) \cup \{\a\}$.
\end{lemma}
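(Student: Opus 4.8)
The plan is to reduce both implications to the single clean statement that when $w'=wr_i$ for a simple reflection $r_i$ with $|w'|=|w|+1$, one has $\Delta(w')=\Delta(w)\sqcup\{w\s_i\}$. This right--multiplication formula is elementary: since $(wr_i)^{-1}=r_iw^{-1}$, and $r_i$ permutes $\Delta^-\setminus\{-\s_i\}$ while sending $-\s_i\mapsto\s_i$, a direct check of the defining condition $(wr_i)^{-1}\gamma<0$ sorts the positive roots $\gamma$ into those already in $\Delta(w)$ together with the single new root $w\s_i$. I would record this as a preliminary observation and then treat the two directions.

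For the forward direction I would assume $w\stackrel{\a}{\to}w'$, i.e. $w'=r_\a w$ and $|w'|=|w|+1$. The length increase gives $w^{-1}\a\in\Delta^+$ (the standard criterion $|r_\a w|>|w|\iff w^{-1}\a>0$), whence $(w')^{-1}\a=-w^{-1}\a<0$ shows $\a\in\Delta(w')$ while $\a\notin\Delta(w)$; since $w'\in W^\varphi_\sI$, in fact $\a\in\Delta(w')\subset\Delta(\fm_1)$, so $\a(\ttT_\varphi)=1$. The crux is to upgrade the reflection $r_\a$ to a simple reflection. For this I would use $\varrho_w=\sum_{\beta\in\Delta(w)}\beta$ (Remark \ref{R:Dw}(b)) to write
\begin{equation*}
  \varrho_{w'}-\varrho_w \ = \ w\varrho - r_\a w\varrho \ = \ \langle w\varrho,\a^\vee\rangle\,\a \,,
\end{equation*}
and then evaluate both sides on $\ttT_\varphi$. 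Because $w,w'\in W^\varphi_\sI$, every root in $\Delta(w)$ and $\Delta(w')$ lies in $\Delta(\fm_1)$ and so is worth $1$ on $\ttT_\varphi$; hence the left side evaluates to $|\Delta(w')|-|\Delta(w)|=|w'|-|w|=1$, while the right side is $\langle w\varrho,\a^\vee\rangle$. Thus $\langle w\varrho,\a^\vee\rangle=\langle\varrho,(w^{-1}\a)^\vee\rangle=1$, and since $\langle\varrho,\beta^\vee\rangle$ is the sum of the simple--coroot coefficients of $\beta^\vee$, this forces $w^{-1}\a$ to be a simple root $\s_i$. Therefore $\a=w\s_i$, $r_\a=wr_iw^{-1}$, and $w'=r_\a w=wr_i$; the preliminary formula then yields $\Delta(w')=\Delta(w)\sqcup\{w\s_i\}=\Delta(w)\sqcup\{\a\}$.

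For the converse I would assume $\Delta(w')=\Delta(w)\sqcup\{\a\}$. Counting gives $|w'|=|\Delta(w')|=|\Delta(w)|+1=|w|+1$ and $\Delta(w)\subseteq\Delta(w')$. By the standard fact that $\Delta(u)\subseteq\Delta(v)$ holds if and only if $v=ux$ with $|v|=|u|+|x|$, the containment together with $|w'|=|w|+1$ forces $w^{-1}w'$ to have length one, i.e. $w'=wr_i$ for a unique simple reflection $r_i$. Applying the preliminary formula, $\Delta(w')=\Delta(w)\sqcup\{w\s_i\}$, so comparison with the hypothesis identifies $\a=w\s_i$. Then $r_\a=wr_iw^{-1}$ gives $r_\a w=wr_i=w'$, and with $|w'|=|w|+1$ this is exactly $w\stackrel{\a}{\to}w'$.

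The main obstacle is the forward direction's passage from a general reflection $r_\a$ to a simple reflection: a Bruhat cover $w'=r_\a w$ in an arbitrary Weyl group need not satisfy $\Delta(w)\subseteq\Delta(w')$, and $w^{-1}\a$ need not be simple. The hypothesis $w,w'\in W^\varphi_\sI$ is precisely what makes the $\ttT_\varphi$--evaluation collapse to the equality $\langle w\varrho,\a^\vee\rangle=1$; this is where the restriction $\Delta(w),\Delta(w')\subset\Delta(\fm_1)$ is used essentially, and everything else is formal.
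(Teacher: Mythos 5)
Your proof is correct, but it takes a genuinely different route from the paper's. The paper leans on two results from \CS{}: the identity $\Delta(r_\a w) = \{\a\}\cup(\Delta(w)\cap\Delta(r_\a))\cup r_\a(\Delta(w)\backslash\Delta(r_\a))$ for a cover, and their Proposition 3.2.14(5) for the converse; the hypothesis $w,w'\in W^\varphi_\sI$ enters by evaluating $\ttT_\varphi$ on $r_\a\b=\b-n\a$ to force $n=0$, so that $r_\a$ fixes $\Delta(w)\backslash\Delta(r_\a)$ pointwise. You instead prove from scratch the elementary right-multiplication formula $\Delta(wr_i)=\Delta(w)\sqcup\{w\s_i\}$, and your use of the hypothesis is concentrated in the identity $\varrho_{w'}-\varrho_w=\langle w\varrho,\a^\vee\rangle\,\a$ evaluated on $\ttT_\varphi$, which forces $\langle\varrho,(w^{-1}\a)^\vee\rangle=1$ and hence $w^{-1}\a$ simple; this is a nice structural observation (every Bruhat cover inside $W^\varphi_\sI$ is actually a cover in the right weak order, $w'=wr_i$), which the paper never makes explicit. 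For the converse you invoke the inversion-set characterization of the weak order ($\Delta(u)\subseteq\Delta(v)$ iff $v=ux$ with lengths adding), where the paper simply cites \CS{}. The trade-off: your argument is more self-contained and exposes why the reflection must be simple, at the cost of importing the weak-order fact; the paper's is shorter but entirely dependent on the cited formulas. Both uses of the $\ttT_\varphi$-grading are the same in spirit --- all roots in $\Delta(w)\cup\Delta(w')$ have $\ttT_\varphi$-value $1$, which collapses the relevant integer to its minimal value. One small point of hygiene: in the ``if'' direction you should note (as you implicitly do by writing $\sqcup$) that $\a\notin\Delta(w)$, since otherwise $\Delta(w')=\Delta(w)$ forces $w'=w$ and there is nothing to prove; the paper's $\varrho_{w'}=\varrho_w+\a$ makes the same tacit assumption.
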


Together \eqref{E:nw} and Lemma \ref{L:ww'} yield Corollary \ref{C:ww'}.

\begin{corollary} \label{C:ww'}
Let $w,w' \in W^\varphi_\sI$.  Then $w \le w'$ if and only if $\Delta(w) \subset \Delta(w')$; equivalently, $\fn_w \subset \fn_{w'}$.
\end{corollary}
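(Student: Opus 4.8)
The plan is to show that, on $W^\varphi_\sI$, the Bruhat order $w\le w'$ and the containment $\Delta(w)\subseteq\Delta(w')$ define the same partial order; the equivalence with $\fn_w\subseteq\fn_{w'}$ is then immediate from \eqref{E:nw}, since $\fn_w=\bigoplus_{\a\in\Delta(w)}\fm_{-\a}$. One implication will be built directly from Lemma \ref{L:ww'} by adjoining one root at a time, while the reverse implication is the crux, and I would reduce it to the assertion that $W^\varphi_\sI$ is an order ideal of $(W^\varphi,\le)$.

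For the implication $\Delta(w)\subseteq\Delta(w')\Rightarrow w\le w'$, I would induct on $|\Delta(w')|-|\Delta(w)|=|w'|-|w|$. If this is $0$ then $\Delta(w)=\Delta(w')$, whence $w=w'$ by injectivity of $w\mapsto\Delta(w)$ (Remark \ref{R:Dw}(c)). Otherwise I would choose $\a$ of minimal height in $\Delta(w')\setminus\Delta(w)$ and set $\Phi=\Delta(w)\cup\{\a\}$. As $\Phi\subseteq\Delta(w')\subseteq\Delta(\fm_1)$ contains no two roots summing to a root (Lemma \ref{L:DwDm1}), $\Phi$ is closed; the content is that $\Delta^+\setminus\Phi=(\Delta^+\setminus\Delta(w))\setminus\{\a\}$ is closed, which can fail only if $\a=\b+\c$ with $\b,\c\in\Delta^+\setminus\Delta(w)$. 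Here the grading decides the issue: $\a(\ttT_\varphi)=1$ forces $\{\b(\ttT_\varphi),\c(\ttT_\varphi)\}=\{0,1\}$, say $\c\in\Delta(\fm_0)$ and $\b\in\Delta(\fm_1)$; then $\c\notin\Delta(w')$, so closure of $\Delta^+\setminus\Delta(w')$ (Remark \ref{R:Dw}(c)) forces $\b\in\Delta(w')$, giving $\b\in\Delta(w')\setminus\Delta(w)$ of strictly smaller height than $\a$, a contradiction. Thus $\Phi=\Delta(w'')$ for some $w''\in W^\varphi$, necessarily in $W^\varphi_\sI$ since $\Phi\subseteq\Delta(\fm_1)$; Lemma \ref{L:ww'} gives $w\stackrel{\a}{\to}w''$, and $\Delta(w'')\subseteq\Delta(w')$ lets the induction conclude $w''\le w'$, hence $w\le w'$.

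For the reverse implication $w\le w'\Rightarrow\Delta(w)\subseteq\Delta(w')$, the difficulty is that a Bruhat chain joining two elements of $W^\varphi_\sI$ may leave $W^\varphi_\sI$, so Lemma \ref{L:ww'} cannot be applied step by step. I would remove this by proving $W^\varphi_\sI$ is an order ideal of $(W^\varphi,\le)$. Consider $f(w)=\varrho_w(\ttT_\varphi)-|w|=\sum_{\a\in\Delta(w)}(\a(\ttT_\varphi)-1)$ via \eqref{E:rhow}; since $\Delta(w)\subseteq\Delta(\fm_+)$ each summand is $\ge0$, so $f\ge0$ with $f(w)=0$ exactly when $w\in W^\varphi_\sI$ (this is \eqref{E:Wint}). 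I claim $f$ is non-decreasing along Bruhat covers in $W^\varphi$. A cover $u\lessdot v$ in $W^\varphi$ satisfies $|v|=|u|+1$ and $v=r_\a u$ for a positive root $\a$ with $u^{-1}\a\in\Delta^+$ (standard for minimal coset representatives, cf. \cite{MR0429933}); moreover $\a\notin\Delta(\fm_0)$, since otherwise $r_\a\in W_0$ would place $u,v$ in a common coset of $W_0\backslash W$ with two distinct minimal representatives (Remark \ref{R:Wp}(b)), which is impossible, so $\a(\ttT_\varphi)\ge1$. From $\varrho_w=\varrho-w(\varrho)$ one computes $\varrho_v-\varrho_u=(1-r_\a)(u\varrho)=\langle u\varrho,\a^\vee\rangle\,\a$, and $\langle u\varrho,\a^\vee\rangle=\langle\varrho,(u^{-1}\a)^\vee\rangle\ge1$ because $u^{-1}\a\in\Delta^+$. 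Hence $f(v)-f(u)=\langle u\varrho,\a^\vee\rangle\,\a(\ttT_\varphi)-1\ge0$, so $f$ is monotone for $\le$ on $W^\varphi$; thus $w\le w'$ with $f(w')=0$ forces $f(w)=0$, which is the order-ideal property.

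Granting this, the reverse implication is routine: for $w\le w'$ in $W^\varphi_\sI$ I would choose a saturated chain $w=u_0\lessdot\cdots\lessdot u_m=w'$ in $W^\varphi$ (Bruhat order on $W^\varphi$ is graded by length), note each $u_i\le w'$ lies in $W^\varphi_\sI$ by the order-ideal property, so each cover $u_i\stackrel{\a}{\to}u_{i+1}$ is between elements of $W^\varphi_\sI$ and Lemma \ref{L:ww'} gives $\Delta(u_i)\subseteq\Delta(u_{i+1})$; chaining yields $\Delta(w)\subseteq\Delta(w')$. I expect the main obstacle to be exactly the order-ideal step---isolating the monotone invariant $f$ and verifying its two positivity inputs, $\a(\ttT_\varphi)\ge1$ and $\langle u\varrho,\a^\vee\rangle\ge1$ for covers in $W^\varphi$; once this is secured, both implications follow mechanically from Lemma \ref{L:ww'}.
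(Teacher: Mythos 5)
Your proof is correct, and it is substantially more careful than the paper's own treatment, which disposes of Corollary \ref{C:ww'} in a single line: ``Together \eqref{E:nw} and Lemma \ref{L:ww'} yield Corollary \ref{C:ww'}.'' The paper thus treats both implications as immediate consequences of the one-step Lemma \ref{L:ww'}, leaving implicit exactly the two points you isolate. For the implication $\Delta(w)\subseteq\Delta(w')\Rightarrow w\le w'$ one must interpolate a chain of sets, closed with closed complement, between $\Delta(w)$ and $\Delta(w')$ inside $\Delta(\fm_1)$; your minimal-height adjunction does this correctly (the abelian property from Lemma \ref{L:DwDm1} gives closure of $\Phi$, and the $\ttT_\varphi$-grading plus closure of $\Delta^+\backslash\Delta(w')$ rules out $\a=\b+\c$), and it is the same closure manipulation via Remark \ref{R:Dw}(c) that the paper itself deploys later in Lemma \ref{L:nwmax} and in Steps 5--6 of Appendix \ref{A:p}, so the author evidently regarded it as routine. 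For the implication $w\le w'\Rightarrow\Delta(w)\subseteq\Delta(w')$ the genuine issue is that a Bruhat chain joining $w$ to $w'$ may leave $W^\varphi_\sI$, so Lemma \ref{L:ww'} cannot be applied link by link without further input; your resolution---the statistic $f(w)=\varrho_w(\ttT_\varphi)-|w|=\sum_{\a\in\Delta(w)}(\a(\ttT_\varphi)-1)\ge0$, with $W^\varphi_\sI=f^{-1}(0)$ by \eqref{E:rhow} and \eqref{E:Wint}, shown non-decreasing along covers in $W^\varphi$ so that $W^\varphi_\sI$ is an order ideal, combined with the standard gradedness of the parabolic quotient poset---is sound: the computation $\varrho_v-\varrho_u=(1-r_\a)(u\varrho)=\langle u\varrho,\a^\vee\rangle\,\a$ with $\langle\varrho,(u^{-1}\a)^\vee\rangle\ge1$ checks out, as does the exclusion $\a\notin\Delta(\fm_0)$ via uniqueness of minimal coset representatives (Remark \ref{R:Wp}(b)), which gives $\a(\ttT_\varphi)\ge1$. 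What your route buys is a complete, self-contained argument together with a by-product of independent interest (any $u\in W^\varphi$ lying below an element of $W^\varphi_\sI$ in the Bruhat order is again in $W^\varphi_\sI$); what the paper's terseness buys is brevity, at the cost of suppressing the chain-interpolation issue that your write-up correctly identifies as the crux.
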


\begin{proof}[Proof of Lemma \ref{L:ww'}]
Suppose that $\Delta(w') =\Delta(w) \cup \{\a\}$.  So $\varrho_{w'} = \varrho_w + \a$, cf. \eqref{E:rhow}.  Noting that the $\Phi_w$ and $\langle\Phi_w\rangle$ of \cite{MR2532439} are our $\Delta(w)$ and $\varrho_w$, respectively, we see that $w \stackrel{\a}{\to} w'$ by \cite[Proposition 3.2.14(5)]{MR2532439}.

Conversely, suppose that $w \stackrel{\a}{\to} w'$.  Then $|w'| = |w| + 1$.  By \cite[(3.9)]{MR2532439}, we have 
\begin{subequations} \label{SE:ww'}
\begin{equation} \label{E:ww'1}
  \Delta(w') \ = \ \{\a\} \,\cup\, \left( \Delta(w) \cap \Delta(r_\a)\right) \,\cup\,
  r_\a \left( \Delta(w) \backslash \Delta(r_\a) \right) \, .
\end{equation}
Therefore, to establish $\Delta(w') = \Delta(w) \cup \{\a\}$, it suffices to show that 
\begin{equation} \label{E:ww'2}
  r_\a \left( \Delta(w) \backslash \Delta(r_\a) \right) \ = \ \Delta(w) \backslash \Delta(r_\a) \,.
\end{equation}
\end{subequations}
To that end, suppose that $\b\in \Delta(w) \backslash \Delta(r_\a)$, so that $r_\a\b = \b - n\a \in\Delta(w')$, for some $n\in\bZ$.  Since $\a,\b$ and $r_\a\b$ are elements of $\Delta(w) \cup \Delta(w') \subset \Delta(\fm_1)$, we have $1 = \a(\ttT_\varphi) = \b(\ttT_\varphi)$ and $1 = (r_\a\b)(\ttT_\varphi) = 1 - n$, by \eqref{E:grm}.  Thus, $n=0$ and $r_\a\b = \b$. This establishes \eqref{E:ww'2}, whence \eqref{E:ww'1} yields $\Delta(w') = \Delta(w) \cup \{\a\}$.
\end{proof}

\begin{remark*}
In general, $w \le w'$ is not equivalent to $\Delta(w) \subset \Delta(w')$.
\end{remark*}

\noindent Proposition \ref{P:SchubMax} is an immediate corollary to Lemmas \ref{L:ww'} and \ref{L:nwmax}.

\begin{lemma} \label{L:nwmax}
Assume $w \in W^\varphi_\sI$ is maximal with respect to the Bruhat order.  Then $\fn_w$ is a maximal IVHS.
\end{lemma}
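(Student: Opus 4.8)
The plan is to argue by contraposition. Assuming $\fn_w$ is contained in a strictly larger IVHS, I will produce $w'\in W^\varphi_\sI$ with $\fn_w\subsetneq\fn_{w'}$; by Corollary \ref{C:ww'} this gives $w<w'$ and contradicts the Bruhat-maximality of $w$ in $W^\varphi_\sI$. So suppose $\fn_w\subsetneq\fe$ for some abelian subalgebra $\fe\subset\fm_{-1}$ (an IVHS, by Lemma \ref{L:abel}), and set $\ell'=\tdim_\bC\fe>|w|$. The set of $\ell'$-dimensional abelian subalgebras of $\fm_{-1}$ is a Zariski-closed, $G_0$-invariant subvariety of $\tGr(\ell',\fm_{-1})$: it is exactly the fibre $\cV_{\ell'}(\cI)_o$ of Theorem \ref{T:intelem}, since the vanishing of $[\,\cdot\,,\cdot\,]\colon\tw^2\fe\to\fm_{-2}$ is a closed condition preserved by $\tAd\,G_0$.

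The key device is to degenerate $\fe$ not merely under the maximal torus but under a Borel subgroup $B_0\subset G_0$ with $T\subset B_0$ whose unipotent radical has Lie algebra $\bigoplus_{\b\in\Delta^+(\fm_0)}\fm_\b$. Since $\overline{B_0\cdot\fe}$ is a projective $B_0$-variety contained in $\cV_{\ell'}(\cI)_o$, the Borel fixed-point theorem yields a $B_0$-fixed point $\fe_0$. Being $T$-fixed, $\fe_0=\bigoplus_{\a\in S}\fm_{-\a}$ for some $S\subseteq\Delta(\fm_1)$ with $|S|=\ell'$; being abelian forces $\a+\a'\notin\Delta$ for all $\a,\a'\in S$, so $S$ is closed; and being stable under the unipotent radical forces $S$ to be closed under subtracting roots of $\Delta^+(\fm_0)$ (i.e.\ $\a\in S$, $\b\in\Delta^+(\fm_0)$, $\a-\b\in\Delta(\fm_1)$ imply $\a-\b\in S$). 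First I would check that $\fn_w$ itself is $B_0$-fixed: $\Delta(w)$ is closed under subtracting $\Delta^+(\fm_0)$ because $\Delta^+\setminus\Delta(w)$ is closed (Remark \ref{R:Dw}(c)). Consequently $\fn_w\subseteq b\cdot\fe$ for every $b\in B_0$, and since containment is a closed incidence condition, $\fn_w\subseteq\fe_0$; as $\tdim\fe_0=\ell'>|w|$ this gives $\Delta(w)\subsetneq S$.

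The main point, and what makes the Borel (rather than merely toral) degeneration essential, is that $S$ now satisfies \emph{both} closure conditions of Remark \ref{R:Dw}(c). Closedness of $S$ is the abelian condition. For $\Delta^+\setminus S$: if $\mu,\nu\in\Delta^+\setminus S$ with $\mu+\nu\in S\subseteq\Delta(\fm_1)$, then $\mu(\ttT_\varphi)+\nu(\ttT_\varphi)=1$ forces one of them, say $\mu$, into $\Delta^+(\fm_0)$ and the other, $\nu$, into $\Delta(\fm_1)$; but then $\nu=(\mu+\nu)-\mu\in S$ by the subtraction-stability of $S$, contradicting $\nu\notin S$. Hence $S=\Delta(w')$ for a unique $w'\in W^\varphi$, and $S\subseteq\Delta(\fm_1)$ places $w'\in W^\varphi_\sI$ by \eqref{E:Wint}. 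Then $\Delta(w)\subsetneq\Delta(w')$ yields $\fn_w\subsetneq\fn_{w'}$ and the desired contradiction. I expect the only delicate points to be the verification that $\fe_0$ contains $\fn_w$ (via $B_0$-invariance of the containment) and the book-keeping showing that $B_0$-stability of a coordinate subspace of $\fm_{-1}$ translates exactly into the combinatorial subtraction-closure of $S$; everything else is formal once the Borel fixed-point theorem is in hand.
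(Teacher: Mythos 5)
Your proof is correct, but it takes a genuinely different route from the paper's. The paper argues one root at a time: given an abelian $\fe\supsetneq\fn_w$, it decomposes an element of $\fe\setminus\fn_w$ into root vectors to produce a single root $\a\in\Delta(\fm_1)\setminus\Delta(w)$ with $\Phi_\a=\Delta(w)\cup\{\a\}$ closed, and then chooses $\a$ \emph{minimal} with respect to the partial order ``differ by an element of $\Delta^+(\fm_0)$'' to force $\Delta^+\setminus\Phi_\a$ to be closed as well, yielding $w'\in W^\varphi_\sI$ with $|w'|=|w|+1$ and $w<w'$. You instead degenerate the entire subspace $\fe$ at once via the Borel fixed-point theorem, landing on a $B_0$-fixed coordinate subspace $\fe_0=\bigoplus_{\a\in S}\fm_{-\a}$ still containing $\fn_w$; the two closure conditions of Remark \ref{R:Dw}(c) then come out cleanly (abelianness gives closedness of $S$; stability under the unipotent radical gives the subtraction property that makes $\Delta^+\setminus S$ closed). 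The underlying combinatorial content is the same in both arguments --- in particular your verification that $\Delta(w)$ is stable under subtracting $\Delta^+(\fm_0)$ is exactly the closedness of $\Delta^+\setminus\Delta(w)$ that drives the paper's minimality step --- but the mechanisms differ. The paper's version is more elementary and self-contained (pure root combinatorics, no appeal to completeness of the Grassmannian or to Borel's theorem); yours is slightly stronger in output, since it shows that \emph{every} IVHS containing $\fn_w$ degenerates within its $B_0$-orbit closure to some $\fn_{w'}$ of the \emph{same} dimension with $w\le w'$, which is a relative refinement of Corollary \ref{C:intelem} and is very much in the spirit of the $G_0$-orbit description in Theorem \ref{T:intelem}. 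All the delicate points you flag (the $B_0$-invariance of the incidence condition $\fn_w\subseteq V$, and the translation of $B_0$-stability of a coordinate subspace into subtraction-closure of $S$) check out.
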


\begin{proof}
By Lemma \ref{L:abel}, IVHS are subspaces $\fe \subset \fm_{-1}$ such that $[\fe,\fe]=0$.  Suppose that $\fn_w \subset \fe$.  Let $\z \in \fe$, and write $\z = \sum_{\a\in\Delta(\fm_1)} \z_{-\a}$ with $\z_{-\a} \in \fg_{-\a}$.  Since $\fe$ is abelian, we have $[\z,\fn_w]=0$.  Since $\fn_w$ is a direct sum of root spaces, this is possible if and only if $[\z_{-\a},\fn_w] = 0$ for all $\a$.  Equivalently, if $\z_{-\a}\not=0$, then the set $\Phi_\a = \{\a\} \cup \Delta(w) \subset \Delta(\fm_1)$ is closed, c.f. Remark \ref{R:Dw}(c).  Therefore, the lemma holds if and only if the set $A = \{ \a \in\Delta(\fm_1)\backslash\Delta(w) \ | \ \Phi_\a \hbox{ is closed}\}$ is empty.

Define a partial order on $A$ by declaring $\a_1 < \a_2$ if there exists $\b\in\Delta^+(\fg_0)$ such that $\a_1+\b=\a_2$.  If $A$ is nonempty, then there exists an element $\a\in A$ that is minimal with respect to this partial order.  By Remark \ref{R:Dw}(c), $\Delta^+\backslash\Delta(w)$ is closed.  This, along with the minimality of $\a$, implies that $\Delta^+\backslash\Phi_\a$ is also closed.  By Remark \ref{R:Dw}(c), there exists $w' \in W^\varphi$ such that $\Phi_\a = \Delta(w')$.  It follows from \eqref{E:nw} that $\fn_w \subset \fn_{w'}$, contradicting the maximality of $\fn_w$.  
\end{proof}

\bibliography{refs.bib}
\bibliographystyle{plain}
\end{document}